\newcommand{\R}{\mathbb R}
\newcommand{\N}{\mathbb N}
\newcommand{\sphere}[1]{\mathbb{S}^{#1}}
\newcommand{\e}{\mathrm e}
\newcommand{\abs}[1]{\left\vert #1 \right\vert}
\newcommand{\nor}[1]{\left\Vert #1 \right\Vert}
\newcommand{\D}{\mathcal D}
\newcommand{\U}{\mathcal U}
\newcommand{\eps}{\varepsilon}
\newtheorem{thm}{\bf Theorem}[section]      
\newtheorem{cor}[thm]{\bf Corollary}        
\newtheorem{lem}[thm]{\bf Lemma}            
\newtheorem{prop}[thm]{\bf  Proposition}     
\newtheorem{defn}[thm]{\bf Definition}      
\newtheorem{assumptions}[thm]{\bf Assumptions}      
\newtheorem{rem}[thm]{\bf Remark}       
\begin{document}

\title[Harmonic functions in union of chambers]
{Harmonic functions in union of chambers.}
\author{Laura Abatangelo, Susanna Terracini}

\thanks{2010 {\it Mathematics Subject Classification.} 35B40,
35J25, 35P15, 35B20.\\
  \indent {\it Keywords.}  harmonic functions, unbounded domains, asymptotic estimates.\\
\indent Partially supported by the PRIN2009 grant ``Critical Point Theory and
Perturbative Methods for Nonlinear
\indent Differential Equations''.}

\address{Laura Abatangelo: Dipartimento di Matematica e Applicazioni,
Universit\`a di Milano Bicocca, Piazza Ateneo Nuovo, 1, 20126 Milano (Italy)}
\email{laura.abatangelo@unimib.it}
\address{Susanna Terracini: Dipartimento di Matematica ``Giuseppe Peano'', Via Carlo Alberto 10, 10123 Torino (Italy)}
\email{susanna.terracini@unito.it}

\date{\today}
\maketitle

\begin{abstract}
We characterize the set of   harmonic functions with Dirichlet boundary conditions in unbounded
domains which are union of several different chambers. We analyze the asymptotic behavior of the solutions in connection with
the changes in the domain's geometry. Finally we classify all (possibly sign-changing) infinite energy solutions having given asymptotic frequency  at
the infinite ends of the domain.
\end{abstract}

\section{Introduction}

In this paper we are concerned with solutions to the
following problem
\begin{equation}\label{problema_omega}
\left\{ \begin{array}{ll} \Delta u=0 &\text{in $\Omega$} \\
u=0 &\text{on $\partial \Omega$,}
\end{array}\right.
\end{equation}
where $\Omega$ is a particular unbounded domain defined as the union
of two or more infinite cylinders. In this context the term \emph{chamber} stands
exactly for cylinder. We became interested in these issues in connection with the
problem of the interplay of the geometry of the domain with the transmission of frequencies of solutions,
as it will appear in the sequel. As a matter of
facts, problems of type \eqref{problema_omega} may arise, for example,
from a blow-up analysis for eigenvalues equations in bounded
domains with varying geometries.
This type of equations may describe the possible transmission of frequency from
a chamber to another one, when passing through a certain number of other
chambers, connected by thin tubes (whose section is negligible with respect to its own
length), (see e.g. \cite{FT12,AFT12}).

As this is the simplest case where the domain
presents a sensitive change of geometry, one may
expect  the domain's geometry and solutions' shape to be strictly
related to each other. We mean that such geometric
changes in the domain affects the solutions' shape as well as,
from the opposite point of view, that solutions may carry some
information about the domain's geometry.

We are now going to specify the context and the notation that we will use throughout the paper.
Let $U^R$ and $U^L$ two open regular connected domains in $\R^{N-1}$
for $N\geq2$, possibly unbounded and let
\begin{eqnarray*}
C^R &:=& \{(x,y)\in \R\times\R^{N-1} \ \text{s.t.}\ x>0\ \text{and}\ y\in U^R\};\\
C^L &:=& \{(x,y)\in \R\times\R^{N-1} \ \text{s.t.}\ x<0\ \text{and}\ y\in U^L\};\\
\Omega &:=& C^R \cup C^L \cup \Gamma \qquad \text{being}\ \Gamma:= \partial C^R\cap\partial C^L.
\end{eqnarray*}
We stress that in our setting positive solutions can not have finite energy at both ends of the domain. 
In the same way, uniqueness of solutions of inhomogoneous Laplace equations does not hold, unless the energy is supposed to be finite.
Therefore, in order to classify solutions of  \eqref{problema_omega},
we need to waive the energy boundedness and to allow infinite energy solutions.
As it will appear in the proofs, we can handle infinite energy solutions by imposing suitable thresholds to the so-called 
Almgren quotient.
Being $v$ any solution of \eqref{problema_omega} 
we define its Almgren frequency function:
\begin{equation}\label{enne_intro}
 N(v)(x):= \dfrac{\int_{\Omega_x}\abs{\nabla v}^2}{\int_{\Gamma_x}v^2},
\end{equation}
where $\Gamma_x:=\{(x,y):\ y\in U^R\}$,  
$\Omega_x:=\{(\xi,\eta)\in \Omega :\xi\in (0,x) \}$ if $x>0$ whereas 
$\Omega_x:=\{(\xi,\eta)\in \Omega :\xi\in(x,0) \}$ if $x<0$.
We set $N(0)(x)\equiv 0$ for the trivial solution to \eqref{problema_omega}.
The ratio in \eqref{enne_intro} is acquired from the well-known Almgren frequencies, 
which were introducted by Almgren in the '70s to study certain properties of harmonic functions
and from then on they were employed in many other branches of the analysis of pdes.

On the connecting hyperplane between the two chambers, we are considering the following eigenvalue problems:
\begin{equation}\label{eigenvalue_eq_connecting_section}
(i) \begin{cases}
   -\Delta \psi_k^L = \lambda_k^L \psi_k^L &\text{on }U^L\\
 \psi_k^L = 0 &\text{on }\partial U^L,
 \end{cases}
 \qquad
(ii) \begin{cases}
   -\Delta \psi_k^R = \lambda_k^R \psi_k^R &\text{on }U^R\\
 \psi_k^R = 0 &\text{on }\partial U^R; 
 \end{cases}
\end{equation}
here $\Delta$ denotes the $(N-1)$-Laplacian over $U^L$ and $U^R$ respectively.

For what concerns our first aim about possible characterization of solutions, our main result 
relies on the following lemma
\begin{lem}
 Let $u$ be any nontrivial solution to the problem \eqref{problema_omega}. Then there exist
\begin{align*}
 \lim_{x\to +\infty} N(x) = l^R \in  \left\{ \sqrt{\lambda_j^R} \right\}_{j=1}^{+\infty} \cup \{+\infty\} \\
 \lim_{x\to -\infty} N(x) = l^L \in  \left\{ \sqrt{\lambda_j^L} \right\}_{j=1}^{+\infty} \cup \{+\infty\}.
\end{align*}
\end{lem}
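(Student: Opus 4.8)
The plan is to reduce the statement to a one--dimensional asymptotic analysis on each chamber, the two ends being completely symmetric; I describe the end $x\to+\infty$, working on $C^R=(0,+\infty)\times U^R$. Since $u$ is harmonic in $C^R$ and vanishes on the lateral boundary $(0,+\infty)\times\partial U^R$, for every fixed $x>0$ I would expand the cross--section in the Dirichlet eigenbasis $\{\psi_k^R\}_k$ of $U^R$,
\begin{equation*}
 u(x,y)=\sum_{k\ge1}a_k(x)\,\psi_k^R(y),\qquad a_k(x)=\int_{U^R}u(x,y)\,\psi_k^R(y)\,dy,
\end{equation*}
with convergence in $L^2(U^R)$ and, by interior elliptic estimates, locally uniformly together with all derivatives. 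Projecting $\Delta u=0$ onto $\psi_k^R$ gives the decoupled equations $a_k''=\lambda_k^R a_k$, hence $a_k(x)=\alpha_k\,\e^{\mu_k x}+\beta_k\,\e^{-\mu_k x}$ with $\mu_k:=\sqrt{\lambda_k^R}>0$. Integrating by parts in $x$ and using $a_k''=\lambda_k^R a_k$ I then obtain the closed forms
\begin{equation*}
 H(x):=\int_{\Gamma_x}u^2=\sum_k a_k(x)^2,\qquad D(x):=\int_{\Omega_x}|\nabla u|^2=\sum_k\mu_k\Big(\alpha_k^2(\e^{2\mu_k x}-1)+\beta_k^2(1-\e^{-2\mu_k x})\Big),
\end{equation*}
so that $N(x)=D(x)/H(x)$ is an explicit ratio of two series of nonnegative terms; equivalently $D(x)=\tfrac12 H'(x)-c$ with $c:=\int_{\Gamma_0}u\,\partial_x u$ a fixed constant, the only trace of the extra face $\Gamma_0$.

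The second step is to harvest the a priori summability hidden in the finiteness of these quantities. From $H(x)=\|u(x,\cdot)\|_{L^2(U^R)}^2<\infty$ for every $x>0$, separating the $\e^{\pm2\mu_k x}$ behaviours (for instance by comparing two cross--sections) gives $\sum_k\alpha_k^2\e^{2\mu_k x}<\infty$ and $\sum_k\beta_k^2\e^{-2\mu_k x}<\infty$ for all $x>0$, whence $\sum_k\alpha_k^2$, $\sum_k\beta_k^2$, $\sum_k|\alpha_k\beta_k|$ and $\sum_k\mu_k\alpha_k^2$ are all finite; similarly the finiteness of $D(x)$ for each $x$ makes the series for $D$ absolutely convergent. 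Moreover, if $u\not\equiv0$ then some $a_k$ is not identically zero, and since each $a_k$ is either $\equiv0$ or has at most one real zero, $H(x)>0$ for all large $x$, so $N$ is well defined near $+\infty$.

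Finally I would set $\mu^*:=\sup\{\mu_k:\alpha_k\neq0\}\in[0,+\infty]$ and distinguish three cases. If $0<\mu^*<+\infty$, then by discreteness of the spectrum $\mu^*$ is attained and only finitely many $\mu_k$ lie below it, so the summability above yields $H(x)\sim\big(\sum_{\mu_k=\mu^*}\alpha_k^2\big)\e^{2\mu^* x}$ and $D(x)\sim\mu^*\big(\sum_{\mu_k=\mu^*}\alpha_k^2\big)\e^{2\mu^* x}$ as $x\to+\infty$, hence $N(x)\to\mu^*=\sqrt{\lambda_j^R}$ for the corresponding index $j$. If $\mu^*=+\infty$, then given $M>0$ I choose $k_0$ with $\mu_{k_0}>2M$ and $\alpha_{k_0}\neq0$ and bound $D(x)-MH(x)$ from below, splitting its series into the finitely many terms with $\mu_k\le M$ and the rest; the term $k_0$ forces $D(x)-MH(x)\to+\infty$, so $N(x)\to+\infty$. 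If $\mu^*=0$, i.e. all $\alpha_k=0$, then $H(x)\to0$ by dominated convergence while $D(x)\uparrow\int_{C^R}|\nabla u|^2$, which is strictly positive since $u\not\equiv0$ on $C^R$ by unique continuation on the connected open set $\Omega$; thus $N(x)\to+\infty$ again. The same argument on $C^L$ (where the growing modes are $\e^{-\sqrt{\lambda_k^L}|x|}$ as $x\to-\infty$) gives the statement at $-\infty$. The genuinely delicate point, and the only content beyond routine computation, is the uniform--in--$x$ control of the subdominant parts of the two series; it is handled precisely by the discreteness of the Dirichlet spectrum, which makes $\{\mu_k\le M\}$ finite, together with the summability $\sum_k\alpha_k^2\e^{2\mu_k x}<\infty$, whereas the constant $c$ is dominated in every case and plays no role.
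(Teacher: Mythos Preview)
Your proof is correct and takes a genuinely different route from the paper. The paper obtains the limit via an Almgren--monotonicity/blow--up scheme: it shows that $N(x)$ minus an integrable correction is nondecreasing (Lemma~\ref{lemma_N_monotona}), then translates and normalizes to extract a limit profile on the full cylinder with constant frequency (Lemma~\ref{lemma_convergenza_v_xi}), and finally characterizes constant--frequency solutions as $\e^{\sqrt{\lambda_k}x}\psi_k(y)$ (Lemma~\ref{lemma_N_costante}); the value of the limit is then read off from the limit profile. That machinery is carried out for positive solutions---the monotonicity step relies on the boundary Harnack comparison of Proposition~\ref{passo1}---and the extension to sign--changing $u$ is only asserted ``up to minor modifications'' in Remark~\ref{rem:sign_changing}. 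Your argument instead exploits the exact product structure of $C^R$: separation of variables turns $H$ and $D$ into explicit series in $\e^{\pm2\mu_k x}$, and the limit is identified by locating the dominant exponential mode $\mu^*$. This is more elementary, bypasses Harnack and compactness, and treats the sign--changing case uniformly; the price is that it is tied to the cylindrical geometry and to a discrete Dirichlet spectrum on the cross--section, so it does not directly cover the half--space end of \S2.2 (where the paper switches to the radial Almgren quotient) and would not transfer to variable--coefficient or non--product situations where the monotonicity approach still applies. One small point worth making explicit in your write--up: the ``comparison of two cross--sections'' that separates $\sum_k\alpha_k^2\e^{2\mu_k x}$ from $\sum_k\beta_k^2\e^{-2\mu_k x}$ amounts to the uniform positivity of the $2\times2$ quadratic form $(A,B)\mapsto (A+B)^2+(At+B/t)^2$ for $t\ge \e^{\mu_1\delta}>1$, which is the place where $\mu_1>0$ (i.e.\ the Dirichlet boundary condition on $\partial U^R$) is actually used.
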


We fix two numbers $d^{R} \in \left\{ \lambda_j^R \right\}_{j=1}^{+\infty}$ 
and $d^{L} \in \left\{ \lambda_j^L \right\}_{j=1}^{+\infty}$ 
and define the following set 
\begin{align}\label{S}
 \mathcal S = \left\{ u \ \mathcal C^1\text{ solution to \eqref{problema_omega} such that } 
\text{if } \int_{C^{R,L}} \abs{\nabla u}^2 =+\infty \text{ then } l^{R,L}\leq \sqrt{d^{R,L}}  
\right\}.
\end{align}
Now we can state our main result as follows
\begin{thm}\label{teo_sol_freq_lim_sign_changing}
The set $\mathcal S$ defined in \eqref{S} is a linear space of dimension 
\[ \text{dim}\mathcal S = m(d^R) + m(d^L) \]
where $m$ denotes the Morse index.

In particular, if we restrict to those solutions with finite energy on one hand of the domain, namely $C^L$,  
the set 
\begin{equation}\label{SL}
\mathcal S_L = \left\{ u \ \mathcal C^1 \text{ solution to \eqref{problema_omega} with } 
\int_{C^L} \abs{\nabla u}^2 <+\infty
\text{ and } l^R \leq \sqrt{d^R} \right\} 
\end{equation}
is a linear space of dimension $m(d)$, where $m$ denotes again the Morse index. 
\end{thm}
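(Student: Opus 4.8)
\medskip
\noindent\textbf{Strategy of the proof.}
The plan is to identify $\mathcal S$ with a finite--dimensional space of ``data at infinity'' through a linear isomorphism whose injectivity is a Liouville property and whose surjectivity is an explicit construction. One first describes a solution chamber by chamber: testing $\Delta u=0$ against the Dirichlet eigenfunctions of the section gives, on $C^R$,
\[
u(x,y)=\sum_{k\ge1}\Big(a_k^R\,\e^{\sqrt{\lambda_k^R}\,x}+b_k^R\,\e^{-\sqrt{\lambda_k^R}\,x}\Big)\psi_k^R(y),\qquad x>0,
\]
and symmetrically on $C^L$ (with $x<0$ and exponentials $\e^{\pm\sqrt{\lambda_k^L}x}$). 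Write $P^Ru:=\sum_k a_k^R\,\e^{\sqrt{\lambda_k^R}x}\psi_k^R$, the \emph{leading profile} of $u$ at the right end, and likewise $P^Lu$ at the left one. As the expansion leaves no cross terms after integration over the section, $u$ has finite energy on $C^R$ exactly when $P^Ru\equiv0$ (one checks, using the regularity of $u$ across $\Gamma$, that the decaying part always has finite energy, so infinite energy on $C^R$ is equivalent to $P^Ru\not\equiv0$); and, by the Lemma, if $P^Ru\not\equiv0$ then $l^R=\sqrt{\lambda_\ast^R}$ with $\lambda_\ast^R$ the largest eigenvalue appearing in $P^Ru$. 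Hence the right--end condition in \eqref{S} is equivalent to $P^Ru\in V^R:=\mathrm{span}\{\psi_k^R:\lambda_k^R\le d^R\}$ and the left--end one to $P^Lu\in V^L:=\mathrm{span}\{\psi_k^L:\lambda_k^L\le d^L\}$; in particular $\mathcal S=\{u\ \text{solution}:\ P^Ru\in V^R,\ P^Lu\in V^L\}$ is a linear subspace, and $\dim V^R=m(d^R)$, $\dim V^L=m(d^L)$.

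One then studies the linear map $\Phi:\mathcal S\to V^R\times V^L$, $\Phi(u):=\big(P^Ru(0,\cdot),P^Lu(0,\cdot)\big)$. \emph{Injectivity:} if $\Phi(u)=0$ then $P^Ru\equiv P^Lu\equiv0$, so $u$ has finite energy on both chambers; integrating $\int u\,\Delta u=0$ by parts over $\Omega\cap\{|x|<R\}$, the lateral walls and the back--wall part of $\partial\Omega$ drop out by the boundary condition, while the flux through the two sections $\{x=\pm R\}$ tends to $0$ along a suitable sequence $R\to\infty$ by finiteness of the energy and the Poincar\'e inequality on the bounded sections; hence $\int_\Omega|\nabla u|^2=0$ and $u\equiv0$ --- this is the finite--energy uniqueness mentioned in the Introduction. \emph{Surjectivity:} given $(g^R,g^L)\in V^R\times V^L$, let $G^R$ be the harmonic extension of $g^R$ into $C^R$ carrying only growing modes (so $G^R=\sum a_k^R\,\e^{\sqrt{\lambda_k^R}x}\psi_k^R$ if $g^R=\sum a_k^R\psi_k^R$), $G^L$ the analogue on $C^L$; pick cut--offs $\chi^R,\chi^L$ equal to $1$ near the corresponding infinite end and $\equiv0$ near $\Gamma$, and set $\bar u:=\chi^RG^R+\chi^LG^L$. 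Then $\bar u=0$ on $\partial\Omega$ and $f:=\Delta\bar u\in L^2(\Omega)$ has compact support. Boundedness of the sections yields Poincar\'e's inequality on $\Omega$, hence coercivity of the Dirichlet form on $H^1_0(\Omega)$, and Lax--Milgram provides a unique finite--energy $w\in H^1_0(\Omega)$ with $\Delta w=f$. The function $u:=\bar u-w$ is harmonic, vanishes on $\partial\Omega$, belongs to the regularity class defining $\mathcal S$, and --- $w$ being of finite energy, hence purely decaying on each chamber --- has $P^Ru=g^R$ and $P^Lu=g^L$; thus $u\in\mathcal S$ and $\Phi(u)=(g^R,g^L)$. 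Therefore $\Phi$ is an isomorphism and $\dim\mathcal S=\dim V^R+\dim V^L=m(d^R)+m(d^L)$.

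For the subspace $\mathcal S_L$ (with $d=d^R$): a nontrivial solution cannot have finite energy on both chambers, so for $u\ne0$ the hypothesis $\int_{C^L}|\nabla u|^2<+\infty$ automatically forces infinite energy on $C^R$; and, finiteness of the energy on $C^L$ being equivalent to $P^Lu\equiv0$, one obtains $\mathcal S_L=\mathcal S\cap\{P^Lu\equiv0\}=\Phi^{-1}\!\big(V^R\times\{0\}\big)$, a linear subspace of dimension $\dim V^R=m(d)$.

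The step I expect to be the main obstacle is the surjectivity construction: one has to secure both the existence of the finite--energy corrector $w$ --- which rests on coercivity, i.e. on the Poincar\'e inequality over the unbounded $\Omega$, available precisely because the sections are bounded --- and the fact that subtracting $w$ leaves the prescribed leading profiles untouched, which is exactly the statement that a finite--energy harmonic function with zero lateral data on a half--cylinder of bounded section has a purely decaying eigenfunction expansion. A secondary delicate point is the regularity of solutions near the junction $\Gamma$, where the two sections differ and $\partial\Omega$ is not smooth; this must be addressed to give ``$\mathcal C^1$ solution'' and the modal expansion a precise meaning there, but it is harmless since the singular set has codimension at least two.
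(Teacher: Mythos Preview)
Your argument is correct and takes a genuinely different route from the paper.  The paper builds Theorem~\ref{teo_sol_freq_lim_sign_changing} on top of the machinery developed for the positive case: almost--monotonicity of the Almgren quotient (Lemma~\ref{lemma_N_monotona}), the Phragm\'en--Lindel\"of principle and boundary Harnack to compare solutions (Proposition~\ref{passo1}), a blow--up along translations to identify the limiting frequency (Lemmas~\ref{lemma_convergenza_v_xi}--\ref{lemma_N_costante}), and a variational construction via the compliance--type minimum \eqref{min_prob}; the sign--changing statement is then obtained by iterating this scheme eigenvalue by eigenvalue (Remark~\ref{rem:sign_changing}).  You bypass all of this with a direct spectral/functional--analytic argument: separate variables on each chamber, read off the admissibility condition as a finite--dimensional constraint on the growing Fourier modes, and build the isomorphism $\Phi$ whose injectivity is the finite--energy Liouville theorem and whose surjectivity is a cut--off plus Lax--Milgram corrector.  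This is shorter and more transparent for the linear--algebraic content of the theorem; what it gives up is the robustness of the paper's comparison tools (which also cover the half--space section, where there is no discrete eigenbasis and your Poincar\'e/Lax--Milgram step would not be available).

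Two small points to tighten.  First, the identification ``if $P^Ru\neq0$ then $l^R=\sqrt{\lambda^R_\ast}$ with $\lambda^R_\ast$ the largest eigenvalue present in $P^Ru$'' is not literally the content of the Lemma you cite (which only says $l^R\in\{\sqrt{\lambda_j^R}\}\cup\{+\infty\}$); you should compute $N(x)$ directly from the expansion and observe that the weighted average of $\sqrt{\lambda_k}$ with weights $a_k^2 e^{2\sqrt{\lambda_k}x}$ concentrates on the top mode, and diverges if infinitely many $a_k$ are nonzero.  Second, the claim that ``the decaying part always has finite energy'' amounts to $(b_k)\in\mathfrak h^{1/2}$, which you should justify from the trace regularity of $u$ on a section $\{x=x_0\}$ strictly inside $C^R$ (not at $x=0$, where the junction sits).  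Both are routine once stated.
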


For the reader's convenience we recall that in this context the Morse index of the eigenvalue $\lambda_k^{L,R}$ 
is the sum of the multiplicity of the eigenvalues $\lambda_j^{L,R}$ with $j\leq k$.

If we focus our attention just on positive solutions, 
as a byproduct of the previous result we obtain the following theorem, which corresponds to
the particular case $d^R=\lambda_1^R$ in Equation \eqref{SL}:
\begin{thm}\label{teorema_omega}
There exists a unique  (up to a multiplicative constant)  positive $\mathcal C^1$ solution $v^L$ to the
problem \eqref{problema_omega}, provided
\begin{equation}\label{finite_energy}
\int_{C_L}\abs{\nabla v^L}^2 < \infty.
\end{equation}
Moreover,
\begin{itemize}
\item if $U^R$ is bounded, then $v^L$ is asymptotic to
$\e^{\sqrt{\lambda_1}x}\psi_1(y)$ as $x\rightarrow\infty$ uniformly
with respect to $y\in U^R$, being $\psi_1$ and $\lambda_1$ the first eigenfunction and eigenvalue respectively to the problem
(ii) in Equation \eqref{eigenvalue_eq_connecting_section} with a little abuse of notation;
\item if $C^R$ is the whole right halfspace of $\R^N$, then $v^L$ is asymptotic to
$x$ as $x\rightarrow\infty$ uniformly with respect to $y\in
\R^{N-1}$.
\end{itemize}
An analogous statement defines $v^R$. Finally, all positive solutions to problem \eqref{problema_omega}
are positive convex combinations of $v^L$ and $v^R$.
\end{thm}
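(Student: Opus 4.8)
The plan is to deduce Theorem~\ref{teorema_omega} from Theorem~\ref{teo_sol_freq_lim_sign_changing}. Taking $d^{R}=\lambda_1^R$ in \eqref{SL}, and recalling that $\lambda_1^R$ is simple because $U^R$ is connected, we get $m(\lambda_1^R)=1$, so $\mathcal S_L$ is a line; what remains is to show that (a) this line is spanned by a \emph{strictly positive} solution $v^L$, with the stated asymptotics, (b) $v^L$ is unique up to a positive factor, and (c) every positive solution of \eqref{problema_omega} is of the form $\alpha v^L+\beta v^R$ with $\alpha,\beta\geq0$, $(\alpha,\beta)\neq(0,0)$.

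To produce a positive element of $\mathcal S_L$ I would argue by exhaustion. For $n\in\N$ let $\Omega_n:=\Omega\cap\{-n<x<n\}$ and let $u_n>0$ be the harmonic function in $\Omega_n$ vanishing on $\partial\Omega\cap\{-n<x<n\}$ and on $\{-n\}\times U^L$, and equal to $\psi_1^R$ on $\{n\}\times U^R$ (in the half--space case one takes instead a normalised linear datum). Fix $P_0\in\Gamma$ and rescale so that $u_n(P_0)=1$; Harnack chains give locally uniform bounds, boundary elliptic estimates upgrade them to $\mathcal C^{1,\alpha}_{loc}(\overline\Omega)$ bounds, and a diagonal argument extracts a subsequence converging in $\mathcal C^1_{loc}(\overline\Omega)$ to a harmonic $v^L\geq0$ vanishing on $\partial\Omega$, with $v^L(P_0)=1$, hence $v^L>0$ by the strong maximum principle. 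Since on $C^L\cap\{x>-n\}$ the function $u_n$ minimises the Dirichlet energy among all extensions of $u_n(0,\cdot)$ vanishing on $\{x=-n\}$, comparison with the explicit extension assembled from the eigenfunctions $\psi_k^L$ gives $\int_{C^L\cap\{x>-n\}}\abs{\nabla u_n}^2\leq C$ uniformly in $n$ (the $\mathcal C^1_{loc}$ bound controls the trace $u_n(0,\cdot)$), so $\int_{C^L}\abs{\nabla v^L}^2<\infty$ in the limit. Now $v^L\not\equiv0$ and has finite energy on $C^L$, so it cannot have finite energy on $C^R$ as well (otherwise $\int_\Omega\abs{\nabla v^L}^2<\infty$ together with $v^L|_{\partial\Omega}=0$ would force $v^L\equiv0$); hence in the eigenfunction expansion $v^L(x,y)=\sum_k g_k(x)\psi_k^R(y)$ on $C^R$, with $g_k''=\lambda_k^R g_k$, at least one mode grows as $x\to+\infty$. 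If a growing mode $\psi_k^R$ with $k\geq2$ were present, the one with the largest exponent would dominate for $x\to+\infty$ and force $v^L(x,\cdot)$ to change sign, since $\psi_k^R$ is not of one sign for $k\geq2$; therefore the only growing mode is $\psi_1^R$, so $l^R=\sqrt{\lambda_1^R}\le\sqrt{d^R}$ and $v^L\in\mathcal S_L\setminus\{0\}$. The same projection/sign argument shows that every positive solution with finite energy on $C^L$ lies in $\mathcal S_L$; since $\dim\mathcal S_L=1$ it must be a multiple of $v^L$, and the multiple is positive because the solution is. This gives existence and uniqueness of $v^L$.

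For the asymptotics, the bound $l^R=\sqrt{\lambda_1^R}$ kills all growing modes with $k\geq2$, so on $C^R$ we have $v^L=g_1(x)\psi_1^R(y)+\sum_{k\geq2}b_k\e^{-\sqrt{\lambda_k^R}x}\psi_k^R(y)$ with $g_1(x)=a\,\e^{\sqrt{\lambda_1^R}x}+b\,\e^{-\sqrt{\lambda_1^R}x}$; positivity of $g_1(x)=\int_{U^R}v^L(x,y)\psi_1^R(y)\,dy$ on $(0,\infty)$ together with $\int_{C^R}\abs{\nabla v^L}^2=\infty$ forces $a>0$, and then $\abs{v^L-a\,\e^{\sqrt{\lambda_1^R}x}\psi_1^R}=O(\e^{-\sqrt{\lambda_1^R}x})$ uniformly in $y\in U^R$ (using that $U^R$ is bounded, so the series converges uniformly), whence $v^L\sim a\,\e^{\sqrt{\lambda_1^R}x}\psi_1^R(y)$. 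When $C^R$ is the whole half--space there is no discrete spectrum: here $l^R=0$ forces, via a Phragm\'en--Lindel\"of argument, that $v^L$ grows at most linearly; writing $v^L|_{C^R}=c\,x+h$ with $h$ the bounded harmonic extension of the boundary trace of $v^L$ on $\{x=0\}$, one has $h\to0$ at $+\infty$, and $c>0$ by positivity (and $c\neq0$ since otherwise $v^L$ would have finite energy on $C^R$), so $v^L\sim c\,x$.

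Finally, for the structure of all positive solutions: $v^L$ and $v^R$ are positive and linearly independent (finite energy on opposite chambers), so every $\alpha v^L+\beta v^R$ with $\alpha,\beta\geq0$ not both zero is a positive solution. Conversely let $u>0$ solve \eqref{problema_omega}. Since a positive solution cannot have finite energy at both ends, either it has finite energy on $C^L$ — then $u=\alpha v^L$ by uniqueness — or on $C^R$ — then $u=\beta v^R$ — or on neither. In the last case, the sign argument applied at each end shows that $u$ has exactly one growing mode at $+\infty$, namely $\psi_1^R$ with a positive coefficient, and exactly one at $-\infty$, namely $\psi_1^L$ with a positive coefficient. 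Choosing $\alpha,\beta>0$ so that $\alpha v^L$ cancels the first and $\beta v^R$ the second (possible because $v^L$ has the mode $\psi_1^R$ at $+\infty$ but none at $-\infty$, and symmetrically for $v^R$), the function $w:=u-\alpha v^L-\beta v^R$ is harmonic, vanishes on $\partial\Omega$, and has finite energy at both ends; letting $x\to\pm\infty$ along a sequence on which $\int_{\Gamma_x}(w^2+\abs{\nabla w}^2)\to0$, the boundary terms in $\int_\Omega\abs{\nabla w}^2=\int_{\partial\Omega}w\,\partial_\nu w$ vanish, so $w\equiv0$ and $u=\alpha v^L+\beta v^R$. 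I expect the delicate points to be the compactness step — making sure the limit $v^L$ is nontrivial and has finite energy on $C^L$ — and, throughout, the half--space case, where the eigenfunction expansions must be replaced by Phragm\'en--Lindel\"of/Liouville arguments.
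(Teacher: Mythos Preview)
Your strategy differs from the paper's in a structurally important way. You deduce Theorem~\ref{teorema_omega} from Theorem~\ref{teo_sol_freq_lim_sign_changing}, but in the paper the logical order is the reverse: the positive case is established first (Theorem~\ref{teorema_omega_energia_finita_da_una_parte} and Theorem~3.4), and only afterwards, in Remark~\ref{rem:sign_changing}, is the machinery extended to higher eigenvalues to yield Theorem~\ref{teo_sol_freq_lim_sign_changing}. So invoking the latter as a black box is circular relative to the paper's development, even though the introduction advertises Theorem~\ref{teorema_omega} as a ``byproduct''.

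Setting that aside, your concrete steps take a genuinely different route. For existence the paper does not use exhaustion: it writes $v^L=u+\Phi$ where $\Phi$ is the unique positive solution on $C^R$ from Theorem~\ref{teorema_semicilindro} and $u$ solves the minimisation problem~\eqref{min_prob}; uniqueness and asymptotics are then obtained by rerunning the Phragm\'en--Lindel\"of\,/\,boundary-Harnack\,/\,Almgren-monotonicity argument of Section~2 on $\Omega$. For the final claim that every positive solution is a nonnegative combination of $v^L,v^R$, the paper uses a direct Phragm\'en--Lindel\"of comparison of $w-(c^Lv^L+c^Rv^R)$ against $c^Lv^L+c^Rv^R+1$, rather than your mode-cancellation argument. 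Your arguments are largely correct and in places more transparent, but two points need the paper's tools: your ``the mode with the largest exponent would dominate'' step tacitly assumes only finitely many growing Fourier modes, which is not automatic --- the paper secures the a~priori bound $v^L=O(\e^{\sqrt{\lambda_1}x}\psi_1)$ precisely via boundary Harnack (Lemma~\ref{boundary_Harnack}) and Phragm\'en--Lindel\"of (Proposition~\ref{passo1}); and in the half-space case there is no discrete spectrum, so $\mathcal S_L$ as in~\eqref{SL} is not defined and Theorem~\ref{teo_sol_freq_lim_sign_changing} does not apply --- the paper treats that case separately with the spherical Almgren quotient~\eqref{enne_hyperplane}.
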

\begin{rem}
 As appearing in Theorem \ref{teo_sol_freq_lim_sign_changing}, the dimension of the space $\mathcal S_L$ 
even in Theorem \ref{teorema_omega} is related to 
the multiplicity of the first eigenvalue $\lambda_1^R$. Therefore, we stress the uniqueness stated in Theorem 
\ref{teorema_omega} relies essentially on the assumption $U^R$ is a \emph{connected} domain in $\R^{N-1}$.  
\end{rem}

In the framework of positive solutions, Theorem \ref{teorema_omega} is not a brand new result, 
as we can find it within the so-called \emph{General Martin Theory}.
This is a quite general theory which provides a one-to-one correspondence 
between regular \emph{positive} solutions and the points of the so-called \emph{minimal Martin boundary}
by means of finite harmonic measures supported on the minimal Martin boundary. 
We then foresee that
the Martin boundary is useful to gain some information about the number of the
(linearly independent) positive solutions
to a differential equation, whenever the differential operator satisfies several minimal assumptions. 
We then find worthwhile recalling some general concepts of the 
\emph{General Martin Theory}. To this aim we refer to the book by Ross Pinsky \cite{Pinsky},
chapter 7.

In order to state the known results, let us consider a quite general differential operator $L$ on a domain $D \subseteq\R^N $
satisfying the following
\begin{assumptions}\label{assumptions_martin}
For any $D'\subset\subset D$ the operator $L$ is of the form
$ L= \frac12 \nabla \cdot a\nabla + b\cdot \nabla +V$, with
$a_{i,j},\ b_i \in \mathcal C^{1,\alpha}(\overline{D'})$,
$V\in \mathcal C^\alpha(D')$ and
$\sum_{i,j} (x)v_i v_j >0$ for all $v\in\R^N\setminus\{0\}$ and for all $ x\in D'$.
\end{assumptions}
It is defined
\[\mathcal P_L(D)=\{ u\in \mathcal C^{2,\alpha}(D):\ Lu=0 \text{ and }u>0 \text{ in }D \}\]
the set of all regular positive solutions; and, fixed a point $x_0 \in D$ and, denoting $G$  the Green's function, define the \emph{Martin kernel} as
\[ k(x,y) =
\begin{cases}
\dfrac{G(x,y)}{G(x_0,y)} & y\neq x,\ y\neq x_0\\
0 & y=x_0,\ x\neq x_0\\
1 & y=x=x_0.
\end{cases}
\]

\begin{defn}
A sequence $\{ y_n \}_n \subset D$ for which the limit $\lim_{n\to\infty}k(x,y_n) \in \mathcal P_L(D)$
is called a \emph{Martin sequence}. Two Martin sequences which have the same limit are called \emph{equivalent}.
The collection of such equivalence classes is called the \emph{Martin boundary for $L$ on $D$}.
\end{defn}
We briefly mention that the Martin boundary does not depend on the choice of the fixed point
$x_0$ in the Martin kernel and it can be endowed with a suitable topology; we do not enter into the details,
since they go beyong our specific aim. More related to our work, we find the following

\begin{defn}\label{def_minimality_martin}
A function $u\in \mathcal P_L(D)$ is called \emph{minimal} if whenever $v\in \mathcal P_L(D)$ and $v\leq u$ then in fact
$v=cu$ for some constant $c\in(0,1]$.

Given a point $\xi$ of the Martin boundary, the notation $k(x;\xi)$ means that, up to positive multiples,
\[ k(x;\xi)= \lim_{n\to +\infty} k(x;y_n) \]
where $y_n$ is any representative of the equivalence class $\xi$.

A point $\xi$ on the Martin boundary is called a \emph{minimal Martin boundary point} if $k(x,\xi)$ is minimal.
\end{defn}

\begin{thm}[Martin Representation Theorem]
 Let $L$ satisfy the Assumptions \ref{assumptions_martin} on a domain $D\subseteq \R^N$ and assume that $L$
is subcritical. Then for each $u\in \mathcal P_L(D)$ there exists a unique finite measure $\mu_u$ supported on the minimal
Martin boundary $\Lambda_0$ such that
\[ u(x)= \int_{\Lambda_0} k(x,\xi)\mu_u(d\xi). \]
Conversely, for each finite measure $\mu$ supported on the minimal Martin boundary $\Lambda_0$,
\[ u(x):= \int_{\Lambda_0} k(x,\xi)\mu_u(d\xi) \ \in \mathcal P_L(D). \]
\end{thm}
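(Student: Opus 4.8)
The plan is to follow the classical route to Martin's representation (Martin, Hunt--Wheeden, Helms; in the elliptic generality relevant here, Pinsky \cite{Pinsky}), in three stages: construction of the Martin compactification, the Choquet-type representation together with uniqueness, and the converse. \emph{Stage 1: the Martin compactification.} I would fix a smooth exhaustion $D_n \uparrow D$ by relatively compact subdomains with $x_0 \in D_1$. Harnack's inequality for $L$---valid on every $D'\subset\subset D$ by Assumptions \ref{assumptions_martin}, and non-degenerate because subcriticality makes $G(x,y)$ finite and positive for $x \ne y$---shows that the family $\{k(\cdot,y) : y \in D \setminus D_1\}$ is locally uniformly bounded away from its pole; interior Schauder estimates give local equicontinuity. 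By Arzel\`a--Ascoli, every sequence $y_n$ eventually leaving each compact subset of $D$ has a subsequence along which $k(\cdot,y_n) \to h$ in $C^0_{\mathrm{loc}}(D)$, with $h$ a nonnegative $L$-solution normalised by $h(x_0)=1$, hence $h \in \mathcal P_L(D)$ by Harnack. Collecting these limit functions, identifying equivalent Martin sequences, and separating points by a countable family of evaluations $u \mapsto u(x)$, one gets a compact metric space $D^\ast = D \cup \Delta$ on which $(x,\xi)\mapsto k(x;\xi)$ extends continuously; independence of $x_0$ follows from a direct comparison of the two kernels.

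\emph{Stage 2: representation and uniqueness.} Consider the convex set $\mathcal K = \{u \in \mathcal P_L(D)\cup\{0\} : u(x_0) \le 1\}$, compact and metrizable for $C^0_{\mathrm{loc}}(D)$, with base $\mathcal K_1 = \{u : u(x_0)=1\}$. I would identify the extreme points of $\mathcal K_1$ with the minimal elements of $\mathcal P_L(D)$ normalised at $x_0$ and show that these are exactly the kernels $k(\cdot;\xi)$ with $\xi$ ranging over the minimal Martin boundary $\Lambda_0$, which is a $G_\delta$-subset of $\Delta$ (hence Borel). Choquet's theorem then provides, for each $u \in \mathcal K_1$, a probability measure on $\overline{\{\text{extreme points}\}}$ with barycentre $u$; pushing it forward along $\xi \mapsto k(\cdot;\xi)$ and rescaling by $u(x_0)$ yields a finite measure $\mu_u$ on $\Lambda_0$ with $u = \int_{\Lambda_0} k(\cdot,\xi)\,\mu_u(d\xi)$. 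The decisive structural point---and the step I expect to be the main obstacle---is that $\mathcal P_L(D)\cup\{0\}$ is a lattice cone in its specific order, equivalently that $\mathcal K_1$ is a \emph{Choquet simplex}: this is precisely what upgrades Choquet's theorem to the \emph{uniqueness} of $\mu_u$, and it rests on a Riesz-type decomposition of any positive solution dominated by a fixed one, using once more subcriticality to rule out positive potentials. That $\mu_u$ is carried by $\Lambda_0$ itself, not merely by $\overline{\Lambda_0}$, follows because a non-minimal boundary kernel admits a further non-trivial decomposition, so it cannot be extreme.

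\emph{Stage 3: the converse.} Given a finite measure $\mu$ on $\Lambda_0$, set $u(x) = \int_{\Lambda_0} k(x,\xi)\,\mu(d\xi)$. The integral is finite and $\ge 0$ since $k(x_0,\xi)=1$, and positive by Harnack whenever $\mu\ne 0$. To see $Lu=0$ I would write the solution property of each $k(\cdot,\xi)$ in weak form, $\int k(\cdot,\xi)\,L^\ast\varphi = 0$ for $\varphi \in \Cinf{D}$, and interchange it with the $\mu$-integral---legitimate by the local uniform bounds from Harnack and dominated convergence---so that $u$ is a weak, hence by interior Schauder estimates a $\mathcal C^{2,\alpha}$, solution; thus $u \in \mathcal P_L(D)$, and uniqueness of the representing measure was already secured in Stage 2. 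For the present paper only a very special instance of this machinery is needed, namely $\Lambda_0$ reduced to finitely many (in fact two) points for the domain $\Omega$, in which case the representation degenerates to a finite convex combination and $\mu$ to a vector of weights---exactly the picture behind Theorem \ref{teorema_omega}.
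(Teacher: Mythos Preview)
The paper does not prove this theorem at all: it is stated as a background result from the General Martin Theory, with reference to Pinsky \cite{Pinsky}, chapter~7, and is used only to contextualise Theorem~\ref{teorema_omega} within the existing literature. There is therefore no ``paper's own proof'' to compare against.

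That said, your outline is the standard route to the Martin Representation Theorem (Martin compactification via Harnack and Arzel\`a--Ascoli; Choquet representation on the normalised cone; lattice/simplex structure for uniqueness; Fubini plus regularity for the converse), and it is consistent with the treatment in \cite{Pinsky}. As a sketch it is sound; the one step you rightly flag as the crux---that $\mathcal P_L(D)\cup\{0\}$ is a lattice cone, equivalently that the base is a Choquet simplex---is indeed where the real work lies, and in Pinsky's framework this is obtained from the Riesz decomposition for $L$-superharmonic functions, which in turn uses subcriticality through the existence of the Green function. Your final remark, that for the domain $\Omega$ of this paper the representation collapses to a two-point boundary and a finite convex combination, is exactly how the paper invokes the theorem (via Corollary~\ref{cor6.7} and the surrounding discussion).
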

As already mentioned this is the basic theorem in order to state a one-to-one correspondence
between the elements of the set $\mathcal P_L(D)$ and the points of the \emph{minimal Martin boundary}
by means of finite harmonic measures supported on the minimal Martin boundary. 
%

The General Martin Theory covers even our case of domains formed by different chambers by means of the following theorems
\begin{thm}[Theorem 6.6 in \cite{Pinsky}]
Let $D$ be a non-compact open $N$-dimensional $\mathcal C^{2,\alpha}$-Riemannian manifold with $m$ ends,
i.e. it can be represented in the form $D=F \cup E_1 \cup \ldots \cup E_m$ with $F$ is bounded and closed,
$E_i$ is open, $\overline E_i \cap \overline E_j = \emptyset$ for $i\neq j$ and $F \cap \overline E_i = \emptyset$.
Let $L$ on $D$ satisfy the assumptions \ref{assumptions_martin} and be subcritical.
Then the Martin boundary for $L$ decomposes into $m$ components in the following sense:
if $\{x_n\}_n \subset D$ is a Martin sequence, then all but a finite number of its terms lie in $E_i$
for some $i=1,\ldots,m$.
\end{thm}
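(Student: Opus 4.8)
The plan is to argue by contradiction, exploiting that the ends are separated by the bounded set $F$, so that the Green's function — and hence the Martin kernel — between a point near $F$ and a point far out in an end $E_i$ \emph{factors} through a compact hypersurface separating $E_i$ from the rest of $D$. First I would show that a Martin sequence $\{x_n\}$ must leave every compact subset of $D$: if a subsequence $x_{n_k}\to y_*\in D$, then $k(\cdot,x_{n_k})\to k(\cdot,y_*)$ locally uniformly off $y_*$, while $k(\cdot,y_*)$ is not an $L$-solution (indeed $L\,k(\cdot,y_*)$ is a negative multiple of $\delta_{y_*}$), hence $k(\cdot,y_*)\notin\mathcal{P}_L(D)$; this contradicts the definition of a Martin sequence, whose limit lies in $\mathcal{P}_L(D)$. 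Since for $n$ large $x_n\in D\setminus F=\bigcup_i E_i$ and the $\overline E_i$ are pairwise disjoint, the index $i(n)$ with $x_n\in E_{i(n)}$ is well defined, and it remains to show that $i(n)$ cannot take two distinct values infinitely often.

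Fix a relatively compact open set $W$ with $F\subset W$, smooth boundary, and $D\setminus\overline W=\bigcup_i V_i$ with $V_i\subseteq E_i$; set $\Gamma_i:=\partial W\cap\overline{V_i}$, a compact hypersurface, and take the base point $x_0\in W$. Every path from $x\notin\overline{V_i}$ to $y\in V_i$ must hit $\Gamma_i$, so the strong Markov property of the $L$-diffusion at the first hitting time of $\Gamma_i$ (equivalently: solving the Dirichlet problem for $L$ on $D\setminus\overline{V_i}$ and representing its solution through the $L$-harmonic measure $\gamma^i_x$ of $\Gamma_i$) yields, for all $x\notin\overline{V_i}$ and all $y\in V_i$,
\[
 G(x,y)=\int_{\Gamma_i}G(z,y)\,\gamma^i_x(dz),
\]
with $\gamma^i_x$ a sub-probability measure on $\Gamma_i$ independent of $y$. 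Writing $g^i_x:=d\gamma^i_x/d\gamma^i_{x_0}$ (the measures are mutually absolutely continuous by Harnack, with continuous density on $\Gamma_i$), this gives for $y\in V_i$ and $x\notin\overline{V_i}$ that $k(x,y)=\int_{\Gamma_i}g^i_x\,d\beta^i_y$, where $\beta^i_y(dz):=G(z,y)\,\gamma^i_{x_0}(dz)/\int_{\Gamma_i}G(\cdot,y)\,d\gamma^i_{x_0}$ is a probability measure on the \emph{compact} set $\Gamma_i$.

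Now let $\{x_n\}\subset E_i$ be a Martin sequence with limit $u$. The $x_n$ eventually lie in $V_i$, and by weak-$*$ compactness $\beta^i_{x_n}\rightharpoonup\beta$ along a subsequence, so $u(x)=\int_{\Gamma_i}g^i_x\,d\beta$ for every $x\notin\overline{V_i}$; in particular $u(x)\le\sup_{\Gamma_i}g^i_x$. The crucial estimate is that $\sup_{\Gamma_i}g^i_x\to0$ as $x\to\infty$ inside any end $E_j$ with $j\neq i$: the boundary Harnack principle at $\Gamma_i$ gives $g^i_x\asymp\gamma^i_x(\Gamma_i)$ on $\Gamma_i$ uniformly for $x$ away from $\Gamma_i$, and $\gamma^i_x(\Gamma_i)$ — the $L$-harmonic measure of the fixed compact set $\Gamma_i$ at $x$ — tends to $0$ as $x\to\infty$ in $E_j$ because $L$ is subcritical (one has $\gamma^i_\cdot(\Gamma_i)\le C\,G(\cdot,z_0)$ on $D\setminus\overline{V_i}$ for a fixed $z_0\in V_i$ by the maximum principle, and $G(\cdot,z_0)\to0$ at infinity; equivalently, the $h$-transformed diffusion is transient). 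Hence $u\to0$ at infinity in every end different from $E_i$.

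Suppose finally that a Martin sequence $\{x_n\}$ with limit $u$ had infinitely many terms in two distinct ends $E_i$ and $E_j$. Extracting these as two Martin subsequences, both converging to $u$, and applying the previous step to each, we obtain that $u\to0$ at infinity in every end $\neq E_i$ and in every end $\neq E_j$, hence at infinity throughout $D$; since also $u=0$ on $\partial D$, the (generalized) maximum principle for the subcritical operator $L$ forces $u\equiv0$, contradicting $u\in\mathcal{P}_L(D)$. Therefore all but finitely many $x_n$ lie in a single end, which is the assertion. I expect the technical heart to be precisely the vanishing at infinity, along the other ends, of the $L$-harmonic measure of the collar $\Gamma_i$ — the one place where subcriticality (rather than mere finiteness of $G$ on the diagonal) is genuinely used — together with the boundary-Harnack comparison of the hitting measures $\gamma^i_x$; the remaining ingredients (the reproducing identity for $G$, weak-$*$ compactness of probability measures on a compact set, and the maximum principle) are routine.
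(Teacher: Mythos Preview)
The paper does not prove this statement: it is quoted from Pinsky's monograph (as ``Theorem 6.6 in \cite{Pinsky}'') in the introductory survey on General Martin Theory, and no argument is supplied or even sketched. There is therefore no proof in the paper against which to compare your proposal.

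For what it is worth, your overall strategy---showing first that a Martin sequence must leave every compact set, then factoring the Martin kernel through a compact collar $\Gamma_i$ separating the end $E_i$ from the rest, and finally deriving a contradiction from a positive $L$-solution that would have to vanish at infinity in every end---is indeed the standard route to results of this type and is essentially how Pinsky proceeds. Two points in your sketch deserve more care. First, the justification ``$G(\cdot,z_0)\to0$ at infinity'' is not a consequence of subcriticality by itself: on the half-line $(0,\infty)$ with $L=\tfrac12\,d^2/dx^2$ (Dirichlet at $0$) the operator is subcritical with $G(x,y)=2\min(x,y)$, which does not vanish as $x\to\infty$. What you actually need, and what your parenthetical about transience of the $h$-transformed diffusion is pointing toward, is the decay of the hitting probability $\gamma^i_x(\Gamma_i)$ as $x$ escapes to infinity \emph{in a different end}; this uses the geometric separation of the ends and should be argued directly rather than via a global decay of $G$. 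Second, the closing ``generalized maximum principle'' step---that a positive $L$-solution vanishing on $\partial D$ and at infinity in every end is identically zero---is precisely a Phragm\'en--Lindel\"of/Liouville statement whose validity hinges on subcriticality, and it should be stated and justified rather than merely invoked.
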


\begin{cor}[Corollary 6.7 in \cite{Pinsky}]\label{cor6.7}
Let $L$ satisfy assumptions \ref{assumptions_martin} and be subcritical on the domain $D=(\alpha,\beta)$ where
$-\infty\leq \alpha < \beta\leq +\infty$. Then the Martin boundary of $L$ on $D$ consists of two points.
More specifically, a sequence $\{x_n\}_n$ with no accumulation points in $D$ is a Martin sequence if and only if
$\lim_{n\to\infty} x_n = \alpha$ or $\lim_{n\to\infty} x_n = \beta$.
\end{cor}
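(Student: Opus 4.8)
The plan is to deduce this from the preceding decomposition theorem (Theorem 6.6) together with an explicit analysis of positive solutions of $Lu=0$ in one dimension. First I would verify that the interval $D=(\alpha,\beta)$ fits the hypotheses of Theorem 6.6: writing $D = F\cup E_1\cup E_2$ with $F$ a closed bounded subinterval $[a,b]\subset(\alpha,\beta)$ and $E_1=(\alpha,a)$, $E_2=(b,\beta)$ the two ``ends'', all topological requirements ($E_i$ open, disjoint closures, $F\cap\overline{E_i}=\emptyset$ after shrinking slightly) are met. Theorem 6.6 then tells us that every Martin sequence eventually lies in $E_1$ or in $E_2$, i.e.\ it converges to $\alpha$ or to $\beta$. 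This already shows the Martin boundary has \emph{at most} two components and that the only candidates for Martin sequences are sequences with no accumulation point in $D$, tending to one of the endpoints.

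The remaining work is twofold: (1) show that each endpoint actually contributes \emph{exactly one} Martin boundary point (so any two sequences converging to $\alpha$ are equivalent, and likewise for $\beta$), and (2) show that both endpoints do contribute (the boundary is nonempty at each side). For (1), the key point is that in one dimension the equation $Lu=0$, with $L=\tfrac12 a\, u'' + b\, u' + V u$ and $a>0$ on compact subsets, is a second-order linear ODE whose solution space is two-dimensional; on any subinterval $(\alpha,c)$ the cone of positive solutions is therefore at most two-dimensional, and an ODE argument (a positive solution on $(\alpha,c)$ is determined by its behavior as $x\to\alpha$, using that positivity forces monotonicity of the Wronskian-type quantities) shows that there is, up to scalar multiple, a unique positive solution of $Lu=0$ on a neighborhood of $\alpha$ that is ``minimal at $\alpha$''. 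Concretely: fix $x_0$; for $y_n\to\alpha$ the functions $k(\cdot,y_n)=G(\cdot,y_n)/G(x_0,y_n)$ are positive solutions of $Lu=0$ on $(\alpha, y_n)$, normalized at $x_0$; a Harnack/compactness argument gives a convergent subsequence whose limit $h_\alpha$ is a positive solution on all of $D$ with $h_\alpha(x_0)=1$; and uniqueness of such a limit (independence of the choice of sequence $y_n\to\alpha$ and of subsequence) follows because any two positive solutions of the ODE that both ``come from $\alpha$'' must be proportional — otherwise their difference or a suitable linear combination would change sign near $\alpha$, contradicting that they arise as monotone limits of the single-signed Green's functions. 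Hence all sequences tending to $\alpha$ are equivalent, giving one boundary point; symmetrically for $\beta$.

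For (2), nonemptiness: subcriticality of $L$ on $D$ guarantees $\mathcal P_L(D)\neq\emptyset$ and that the Green's function $G(x,y)$ exists; then simply taking \emph{any} sequence $y_n\to\alpha$ with no accumulation in $D$ and extracting a convergent subsequence of $k(\cdot,y_n)$ as above produces an element of $\mathcal P_L(D)$, so $\alpha$ is genuinely a Martin sequence limit, and likewise $\beta$; by Theorem 6.6 these are the only two, and by step (1) the Martin boundary consists of exactly the two points corresponding to $\alpha$ and $\beta$, with a sequence $\{x_n\}$ (no accumulation in $D$) being a Martin sequence precisely when $x_n\to\alpha$ or $x_n\to\beta$. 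I expect the main obstacle to be step (1), the uniqueness of the positive solution attached to each endpoint: one must rule out that two inequivalent Martin sequences both tend to, say, $\alpha$. The clean way is to exploit the one-dimensionality — the solution space of the ODE is two-dimensional, so a positive solution on $(\alpha,c)$ that is dominated by the Green's function branch is unique up to scaling by a boundary-point/Wronskian argument; making this rigorous (handling the singular endpoint behavior when $\alpha=-\infty$ or when the coefficients degenerate) is the delicate part, but it is exactly the content that Theorem 6.6 plus standard one-dimensional oscillation theory for $Lu=0$ is designed to supply.
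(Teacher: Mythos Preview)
The paper does not prove this statement at all: Corollary~\ref{cor6.7} is simply quoted from Pinsky's book \cite{Pinsky} as background on Martin theory, with no proof given. There is therefore nothing in the paper to compare your argument against.

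That said, your outline is the natural one and essentially correct. You reduce to Theorem~6.6 to get that Martin sequences must tend to $\alpha$ or $\beta$, and then you invoke the one-dimensional structure---the two-dimensionality of the solution space of the ODE $Lu=0$---to argue that each endpoint contributes exactly one Martin boundary point. This is indeed how the result is established in Pinsky. One small point: your decomposition $D=F\cup E_1\cup E_2$ with $F=[a,b]$, $E_1=(\alpha,a)$, $E_2=(b,\beta)$ does not literally satisfy $F\cap\overline{E_i}=\emptyset$ (the point $a$ lies in both $F$ and $\overline{E_1}$); you acknowledge this with ``after shrinking slightly'', but in fact the conditions as stated in the paper's version of Theorem~6.6 cannot all be met simultaneously while keeping $D=F\cup E_1\cup E_2$. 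This is a transcription artifact in how the paper quotes Theorem~6.6 rather than a defect in your argument; the intended hypothesis in Pinsky is weaker (essentially that the $E_i$ are the unbounded components of the complement of a compact set), and your decomposition is fine for that. The only genuinely delicate step, as you correctly identify, is the uniqueness in step~(1): ruling out two inequivalent Martin sequences both tending to $\alpha$. Your sketch via Wronskian/oscillation arguments is the right idea, and the cleanest formulation is that the positive solutions of the ODE near an endpoint are spanned by a ``principal'' and a ``nonprincipal'' solution, with the Martin kernel at $\alpha$ picking out the one that is recessive there; this is standard one-dimensional theory and is exactly what Pinsky uses.
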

\noindent Moreover (see Proposition 5.1.3 in \cite{Pinsky}), in this case $\mathcal P_L(D)$ is 2-dimensional, which means that, via the
Martin Representation Theorem, the minimal Martin boundary consists
exactly of two points.

In particular, a suitable $N$-dimensional generalization of the
previous Corollary covers the case of Theorem \ref{teorema_omega} in the present paper. More
precisely, the Martin boundary of our domain $\Omega$ consists in its
topological boundary together with the union of two points, which
can be identified, roughly speaking, with the two ends of the
domain. Taking into account Dirichlet boundary conditions in our problem \eqref{problema_omega}, 
this means that problem \eqref{problema_omega} has got exactly two linearly independent positive solutions,
as we are able to show, too. 
Thus, our Theorem \ref{teorema_omega} does not provide any
additional information to the known results provided by the General
Martin Theory, except maybe by gaining greater understanding of the
positive solutions' space $\mathcal P_L(D)$: under Dirichlet
boundary conditions, a basis of $\mathcal P_L(D)$ is formed by two
positive regular functions which have finite energy on one end of
the domain, whereas on the other end they tend to infinity. Further,
we are able to describe exactly the divergent behavior, as well as
to prove every element in $\mathcal P_L(D)$ to be minimal according to Definition
\ref{def_minimality_martin}. Then, our original contribution does no
longer refer strictly to the result, but rather to the method: an
upper bound for the Almgren quotient of possible solutions is the
key ingredient for existence of solutions. In the case of positive
solutions the specific threshold for the Almgren
frequency is set by the positivity assumption of the solutions,
but the method can be extended even to sign-changing solutions, which are not included 
in the General Martin Theory, providing a stronger 
result that is Theorem \ref{teo_sol_freq_lim_sign_changing}.

As already mentioned, we are enforced to consider infinite energy solutions. 
As a second point of our work, we follow the idea that normalizing
their necessary divergent asymptotic behavior, we force the asymptotic (vanishing) behavior of the solution
even at the other hand of the domain. For a pair of cylinders, the rate of growth at $+\infty$ can be related
with the rate of vanishing at $-\infty$ by means of the evaluation of a transfer operator (see Section \S \ref{sec:transfer}).
As remarked in \S \ref{subsec:many_chambers}, composition of such transfer operators can be useful to handle a concatenation of many cylinders.
Finally, we shall classify all positive solutions and all infinite energy solutions having the smallest possible  growth at infinity.

\bigskip

The paper is organized as follows: in Section 2 we examine the
existence of positive solutions to problem \eqref{problema_omega}
when $\Omega$ is the cylinder $C^R$ either when its section is
bounded or when it is a whole hyperplane, and we investigate their
possible behavior at infinity; in Section 3 we collect the previous
results in order to prove Theorem \ref{teorema_omega} (see also Theorem \ref{teorema_omega_energia_finita_da_una_parte})
and Theorem \ref{teo_sol_freq_lim_sign_changing}. 
In Section 4 we study the relation between the asymptotic behavior of positive solutions at $+\infty$
and $-\infty$, generalizing our results to domains which are union
of more than two chambers in the very last subsection.

\section{Existence and uniqueness of a positive harmonic function on $C^R$.}

We claim the following
\begin{thm}\label{teorema_semicilindro}
There exists a unique  (up to a multiplicative constant)  positive
solution $u$ to the problem
\begin{equation}\label{problema_semicilindro}
\left\{ \begin{array}{ll} \Delta v=0 &\text{in $C^R$} \\
v=0 &\text{on $\partial C^R$}
\end{array}\right.
\end{equation}
if $U_R$ is bounded or it is a whole hyperplane.
In the first case it will be
\begin{equation}\label{v}
v(x,y)=\left(\e^{\sqrt{\lambda_1}x} - \e^{-\sqrt{\lambda_1}x}\right)\psi_1(y),
\end{equation}
being $\lambda_1$ and $\psi_1$ the first eigenvalue and the first eigenfunction respectively of the problem \eqref{eigenvalue_eq_connecting_section}
item (ii);
whereas in the second case it will be
\begin{equation}\label{x}
v(x,y)= x
\end{equation}
denoting $x$ the first variable in $\R^N$.
\end{thm}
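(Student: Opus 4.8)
\emph{The plan} is to establish existence and uniqueness separately, and in each part to treat the two geometries of $U^R$ in turn. \emph{Existence} is a direct verification. If $U^R$ is bounded and regular, the Dirichlet Laplacian on $U^R$ has a complete $L^2$-orthonormal system $(\psi_k)_k$ with eigenvalues $0<\lambda_1<\lambda_2\le\cdots\to+\infty$, with $\lambda_1$ simple and $\psi_1>0$ in $U^R$; then the function $v$ in \eqref{v} satisfies $\Delta v=0$ because $\partial_{xx}(\e^{\sqrt{\lambda_1}x}-\e^{-\sqrt{\lambda_1}x})=\lambda_1(\e^{\sqrt{\lambda_1}x}-\e^{-\sqrt{\lambda_1}x})$ while $\Delta_y\psi_1=-\lambda_1\psi_1$, it vanishes at $x=0$ and on $\R\times\partial U^R$, it is $\mathcal C^1$ up to $\partial C^R$ since $\psi_1\in\mathcal C^\infty(\overline{U^R})$, and it is positive on $C^R$ because $\e^{\sqrt{\lambda_1}x}-\e^{-\sqrt{\lambda_1}x}>0$ for $x>0$. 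When $C^R$ is the half-space $\{x>0\}$, $v(x,y)=x$ evidently works.

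\emph{Uniqueness will be reduced to an a priori bound.} Let $u>0$ solve \eqref{problema_semicilindro}. In the bounded case I would expand $u(x,\cdot)=\sum_k c_k(x)\psi_k$ with $c_k(x)=\int_{U^R}u(x,y)\psi_k(y)\,dy$; differentiating under the integral (using interior regularity, together with regularity up to the lateral boundary away from the edge $\{0\}\times\partial U^R$) and integrating by parts in $y$ gives $c_k''=\lambda_k c_k$, and $u=0$ on $\{x=0\}$ forces $c_k(0)=0$, so $c_k(x)=a_k(\e^{\sqrt{\lambda_k}x}-\e^{-\sqrt{\lambda_k}x})$ for some $a_k\in\R$, with $a_1>0$ since $c_1(x)=\int_{U^R}u(x,\cdot)\,\psi_1>0$. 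The theorem is then equivalent to $a_k=0$ for every $k$ with $\lambda_k>\lambda_1$, and this is immediate from the global comparison $u\le C\,v$ on $C^R$ for some constant $C$: it forces $|c_k(x)|=|a_k|(\e^{\sqrt{\lambda_k}x}-\e^{-\sqrt{\lambda_k}x})\le C'\e^{\sqrt{\lambda_1}x}$ for all $x>0$, impossible unless $a_k=0$ when $\lambda_k>\lambda_1$; since $\lambda_1$ is simple (here connectedness of $U^R$ is used), $u=c_1(x)\psi_1(y)$ is a multiple of $v$. In the half-space case the analogous reduction is to $u\le C\,x$: granting it, $w:=u/x$ is bounded and, by a short computation, solves $\Delta w+\tfrac{2}{x}\partial_x w=0$, which is precisely the equation satisfied by $z$-radial harmonic functions on $\R^{N+2}=\R^3_z\times\R^{N-1}_y$ (the term $\tfrac{2}{x}\partial_x w$ being the radial part of $\Delta_z$ in dimension $3$); $u$, hence $w$, extends smoothly across $\{x=0\}$ there (one uses $\partial_{xx}u(0,\cdot)=-\Delta_y u(0,\cdot)=0$), so $w$ lifts to a bounded harmonic function on $\R^{N+2}$, therefore constant, whence $u=cx$.

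\emph{The main obstacle} is thus the global comparison $u\le Cv$ (resp. $u\le Cx$). Its local counterpart is routine: on a truncated cylinder $Q_R=(0,R)\times U^R$ both $u$ and $v$ vanish on the lateral boundary and at $x=0$, and on $\{x=R\}$ the Hopf lemma makes $u(R,\cdot)/\psi_1$ --- hence $u(R,\cdot)/v(R,\cdot)$ --- bounded, so the maximum principle yields $u\le\phi(R)\,v$ on $Q_R$ with $\phi(R):=\sup_{U^R}u(R,\cdot)/v(R,\cdot)<\infty$, and comparison on inner slices shows $\phi$ is non-decreasing in $R$. The hard part will be to bound $\phi$ uniformly in $R$ --- equivalently, to rule out a positive solution vanishing on $\partial C^R$ that grows strictly faster than the first mode. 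I would attack this by a Phragm\'en--Lindel\"of argument, comparing $u$ with translated and rescaled barriers built from $v$, or, in the spirit of the present paper, through the monotonicity of the Almgren frequency $N(x)$ (which has a limit in $\{\sqrt{\lambda_j}\}\cup\{+\infty\}$ as in the Lemma of the Introduction), the positivity and Dirichlet conditions excluding every admissible value but $\sqrt{\lambda_1}$. For the half-space the corresponding statement --- that a positive harmonic function on $\{x>0\}$ vanishing on $\{x=0\}$ is at most linear --- is classical and also reads off the Herglotz--Martin representation of positive harmonic functions on a half-space. Once this uniform bound is available, the reductions above close the argument.
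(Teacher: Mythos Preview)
Your plan is correct and, once the comparison $u\le Cv$ (resp.\ $u\le Cx$) is granted, takes a genuinely different and more direct route than the paper. In the bounded case you use the spectral expansion in the cross-section to reduce to the scalar ODEs $c_k''=\lambda_k c_k$, $c_k(0)=0$, and then kill the higher modes by the growth bound; the paper instead sets up the Almgren frequency $N(x)$, proves its (essential) monotonicity, runs a blow-up $v_\xi(x,y)=v(x+\xi,y)/\|v(\xi,\cdot)\|_{L^2}$ to a harmonic function on the full cylinder with constant frequency, and identifies that limit as $\e^{\sqrt{\lambda_k}x}\psi_k$. In the half-space case you use the neat lift $w=u/x$ to a radial-in-$z$ harmonic function on $\R^{N+2}$ and conclude by Liouville; the paper repeats the Almgren argument with half-balls and spherical harmonics. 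Your arguments are shorter and more elementary for this particular statement; the paper's frequency approach buys robustness and is what drives the later results on sign-changing solutions and general limits $l^{R,L}\in\{\sqrt{\lambda_j}\}$ (Theorem~\ref{teo_sol_freq_lim_sign_changing}), which do not reduce to a single Fourier mode.

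On the crucial comparison step you correctly flag it as ``the main obstacle'' and point to Phragm\'en--Lindel\"of; this is exactly what the paper does. It combines Theorem~\ref{PL-principle} with the boundary Harnack inequality (Lemma~\ref{boundary_Harnack}) to get the two-sided bound $C_1\le u/v\le C_2$ (Proposition~\ref{passo1}), which in particular gives your $u\le Cv$. Your local maximum-principle observation that $\phi(R)=\sup_{U^R}u(R,\cdot)/v(R,\cdot)$ is finite and nondecreasing is fine, but by itself does not bound $\phi$; the paper's mechanism for the uniform bound is: if $u/v$ were unbounded, then $v/u\to 0$ along a sequence, Phragm\'en--Lindel\"of forces $v\le 0$, a contradiction. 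For the half-space your appeal to the Herglotz/Martin representation is a legitimate classical shortcut; the paper instead reproves the linear bound via the same PL+Harnack scheme on half-balls (Proposition~\ref{step2hyperplane}).
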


\begin{rem}
We stress the aforementioned solutions have an infinite energy.
\end{rem}

In order to prove this theorem, we will study the two cases separately.

\subsection{The case $U^R$ bounded.}

It is quite simple to prove that the function $v$ defined in \eqref{v} is a solution to the problem \eqref{problema_semicilindro}.
Moreover, we stress it is asymptotic to $\e^{\sqrt{\lambda_1}x}\psi_1(y)$ as $x\rightarrow\infty$.
We aim to prove it is in fact the unique solution.

\begin{prop}\label{step1}
The function $v$ defined in \eqref{v} is the unique solution up to
multiplications by constants.
\end{prop}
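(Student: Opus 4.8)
The plan is to show that any positive solution $v$ of \eqref{problema_semicilindro} on $C^R = (0,+\infty)\times U^R$ with $U^R$ bounded must coincide, up to a positive multiple, with the explicit solution $\left(\e^{\sqrt{\lambda_1}x}-\e^{-\sqrt{\lambda_1}x}\right)\psi_1(y)$. The natural tool is separation of variables via the eigenbasis of the cross-section. Let $\{\psi_k\}_{k\ge 1}$ be an $L^2(U^R)$-orthonormal basis of eigenfunctions of $-\Delta$ on $U^R$ with Dirichlet conditions, with eigenvalues $0<\lambda_1<\lambda_2\le\lambda_3\le\cdots$. For fixed $x>0$ write $c_k(x):=\int_{U^R} v(x,y)\psi_k(y)\,dy$. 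Testing $\Delta v=0$ against $\psi_k(y)$ on the slice $U^R$ and integrating by parts in $y$ (the boundary terms vanish since both $v$ and $\psi_k$ vanish on $\partial U^R$) gives the ODE $c_k''(x)=\lambda_k c_k(x)$, hence $c_k(x)=A_k\e^{\sqrt{\lambda_k}x}+B_k\e^{-\sqrt{\lambda_k}x}$. Since $v=0$ on $\partial C^R$ means in particular $v(0,y)\equiv 0$ (here I use that the boundary of $C^R$ contains $\{0\}\times U^R$), we get $c_k(0)=0$, so $B_k=-A_k$ and $c_k(x)=A_k\left(\e^{\sqrt{\lambda_k}x}-\e^{-\sqrt{\lambda_k}x}\right)$.

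Next I would use positivity to kill all modes except the first. The function $\psi_1$ can be taken strictly positive in $U^R$, so $c_1(x)=\int_{U^R} v(x,y)\psi_1(y)\,dy>0$ for every $x>0$, forcing $A_1>0$. Suppose some $A_k\ne 0$ with $k\ge 2$. Because $\lambda_k>\lambda_1$, the mode $c_k(x)$ grows (or, when $A_k<0$, decays negatively) strictly faster than $c_1(x)$ as $x\to+\infty$: precisely $c_k(x)/c_1(x)\to\pm\infty$. Comparing $v$ to a suitable multiple of the explicit first-mode solution on the region $\{x\ge x_0\}$ for large $x_0$, one sees that the sign of the $\psi_k$-component eventually dominates, contradicting $v\ge 0$. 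Making this rigorous: fix $y_0\in U^R$ with $\psi_k(y_0)\ne 0$ and consider $v(x,y_0)$; expand $v(x,y)=\sum_j c_j(x)\psi_j(y)$ (convergence in $L^2(U^R)$ for each $x$, upgraded to pointwise/uniform convergence on compact subsets of $U^R$ by interior elliptic estimates, using that the tail modes decay in $x$ after one subtracts finitely many terms — or, more cleanly, argue directly on the slice averages against $\psi_k$ combined with a sign test). The cleanest route is: among all $k$ with $A_k\ne 0$, let $K$ be the largest index with $\sqrt{\lambda_K}$ maximal; then $\e^{-\sqrt{\lambda_K}x}v(x,y)\to A_K\psi_K(y)$ in $L^2(U^R)$ (and locally uniformly), and if $K\ge 2$ this limit must be a nonnegative function that is an eigenfunction for $\lambda_K>\lambda_1$, hence changes sign — contradiction. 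Therefore only $A_1$ is nonzero and $v(x,y)=A_1\left(\e^{\sqrt{\lambda_1}x}-\e^{-\sqrt{\lambda_1}x}\right)\psi_1(y)$, as claimed.

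The main obstacle, and the step needing the most care, is the justification of the asymptotic/pointwise control of the Fourier expansion: a priori $v$ is merely a positive harmonic function with possibly infinite energy, so the series $\sum_k c_k(x)\psi_k(y)$ must be handled carefully. I expect to handle this by first establishing, for each fixed $x_1>0$, that $v(x_1,\cdot)\in L^2(U^R)$ (by interior elliptic regularity on slices, since $U^R$ is bounded and $v$ is smooth up to the lateral boundary by standard boundary regularity), so the expansion is legitimate on each slice; then the finitely-many-leading-modes argument above is purely about the coefficients $c_k$, which solve honest ODEs, and the contradiction with $v\ge 0$ is extracted by integrating $v(x,\cdot)$ against the sign-changing candidate limit eigenfunction $\psi_K$ and letting $x\to+\infty$. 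An alternative, perhaps slicker, approach avoiding expansions altogether is to invoke a Phragmén–Lindelöf / maximum-principle comparison: for any $\eps>0$ the function $(1+\eps)A_1\left(\e^{\sqrt{\lambda_1}x}-\e^{-\sqrt{\lambda_1}x}\right)\psi_1(y)$ dominates $v$ on $\{0\}\times U^R$ (trivially) and, one shows, on the lateral boundary and at $x=+\infty$ after normalizing $c_1$; the maximum principle on the strip then yields $v\le (1+\eps)(\cdots)$, and symmetrically a lower bound, whence $v=A_1(\cdots)$. I would present the Fourier-mode argument as the primary proof since it also delivers the precise asymptotic $v\sim A_1\e^{\sqrt{\lambda_1}x}\psi_1(y)$ for free.
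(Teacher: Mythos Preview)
Your Fourier--mode setup is correct: the coefficients $c_k(x)=\int_{U^R}v(x,y)\psi_k(y)\,dy$ do satisfy $c_k''=\lambda_k c_k$ with $c_k(0)=0$, so $c_k(x)=A_k(\e^{\sqrt{\lambda_k}x}-\e^{-\sqrt{\lambda_k}x})$, and $A_1>0$ follows from positivity. The gap is in the step ``among all $k$ with $A_k\ne 0$, let $K$ be the largest index with $\sqrt{\lambda_K}$ maximal''. Nothing you have written rules out \emph{infinitely many} nonzero $A_k$: for instance coefficients decaying like $A_k=\e^{-\lambda_k}$ give $\sum_k A_k^2\e^{2\sqrt{\lambda_k}x}<\infty$ for every $x>0$, so the expansion defines a perfectly good harmonic function on the whole half-cylinder with no dominant mode. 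In that situation your rescaling $\e^{-\sqrt{\lambda_K}x}v(x,\cdot)$ is undefined, and the contradiction never gets off the ground. The obstacle you flag (convergence/regularity of the expansion on slices) is not the real one; the real one is the absence of any a priori growth control on $v$.

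This is precisely what the paper supplies before touching asymptotics. Proposition~\ref{passo1} uses the Phragm\'en--Lindel\"of principle together with the Boundary Harnack inequality (Lemma~\ref{boundary_Harnack}) to show that any two positive solutions $u,v$ satisfy $C_1\le u/v\le C_2$ globally; in particular $v\le C\,(\e^{\sqrt{\lambda_1}x}-\e^{-\sqrt{\lambda_1}x})\psi_1(y)$. Your Phragm\'en--Lindel\"of sketch does not produce this, because applying the comparison to $v-(1+\eps)A_1\Phi$ already requires knowing $\limsup v/\Phi<\infty$, which is the conclusion you want. The Boundary Harnack step is what breaks the circularity, and it is missing from your argument.

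Once Proposition~\ref{passo1} is in hand, your Fourier route does close immediately and is in fact shorter than the paper's: from $v\le C\Phi$ one gets $|c_k(x)|\le C'\e^{\sqrt{\lambda_1}x}$ for every $k$, which forces $A_k=0$ whenever $\lambda_k>\lambda_1$. The paper instead proceeds via an Almgren frequency argument (Lemmas~\ref{lemma_N_monotona}--\ref{lemma_N_costante} and Proposition~\ref{step3}) to identify the asymptotic profile; that machinery is heavier here but is what generalizes to the half-space case and to the sign-changing classification later in the paper. So: keep your expansion, but borrow Proposition~\ref{passo1} to cap the growth first.
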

The proof relies essentially on three different tools: the so-called
``Phragm\`{e}n-Lindel\"{o}f Principle'', which may be read as a
comparison principle on unbounded domains, a boundary version of the
Harnack inequality, and an Almgren--type argument. For similar arguments, see \cite{Pi88,Pi94}.

Let us recall the well-known Phragm\'{e}n--Lindel\"{o}f Principle
stated for the Laplace operator:
\begin{thm}[Phragm\'{e}n--Lindel\"{o}f Principle, \cite{PW}]\label{PL-principle}
 Let $D$ be a domain, bounded or unbounded, and let $u$ satisfy
\begin{eqnarray*}
 -\Delta u&\leq&0 \ \textrm{in $D$,}\\
 u&\leq&0\ \textrm{on $\Gamma$,}
\end{eqnarray*}
where $\Gamma$ is a subset of $\partial D$. Suppose that there is an
increasing sequence of bounded domains $D_1 \subset D_2 \subset
\cdots \subset D_k \subset\cdots$ with properties
\begin{enumerate}
 \item each $D_k$ is contained in $D$; for each point $x\in D$ there is an integer $N$
      such that $x\in D_N$;
 \item the boundary of each $D_k$ consists in two parts $\Gamma_k$ and ${\Gamma_k}'$
      where $\Gamma_k$ is a subset of $\Gamma$ and ${\Gamma_k}'$ is a subset of $D$.
\end{enumerate}
Further, suppose there exists a sequence $\{w_k\}$ which satisfies
\begin{eqnarray*}
 w_k(x)&>&0 \ \textrm{on $D_k\cup \partial D_k$,} \\
 -\Delta w_k &\geq0& \ \textrm{in $D_k$.}
\end{eqnarray*}
Assume there is a function $w(x)$ with the property that at each point $x\in D$ the inequality
$$ w_k(x)<w(x) $$
holds for all $k$ above a certain integer $N_x$. If $u$ satisfies the growth condition
$$\liminf_{k\rightarrow\infty} \left\{\sup_{{\Gamma_k}'}\dfrac{u(x)}{w_k(x)}\right\}\leq 0$$
then
$$u\leq0 \ \textrm{in $D$.}$$
\end{thm}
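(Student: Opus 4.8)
The plan is to run the classical comparison argument underlying every Phragmén–Lindelöf statement: I would fix an arbitrary point $x_0 \in D$ and a parameter $\eps > 0$, form the barrier-corrected function $u - \eps w_k$ on each bounded domain $D_k$, and then play the weak maximum principle against the $\liminf$ growth hypothesis. The whole point is that the $w_k$ act as admissible superharmonic barriers that absorb the possible growth of $u$ at the ``moving'' part of the boundary.

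First I would record the sign conventions: $-\Delta u \leq 0$ makes $u$ subharmonic ($\Delta u \geq 0$), while $-\Delta w_k \geq 0$ makes $w_k$ superharmonic ($\Delta w_k \leq 0$) on $D_k$. Consequently $\Delta(u - \eps w_k) = \Delta u - \eps \Delta w_k \geq 0$, so $u - \eps w_k$ is subharmonic on the \emph{bounded} domain $D_k$, and by the weak maximum principle its supremum over $\overline{D_k}$ is attained on $\partial D_k = \Gamma_k \cup {\Gamma_k}'$. I would then bound it on each boundary piece. On $\Gamma_k \subset \Gamma$ we have $u \leq 0$ and $w_k > 0$, hence $u - \eps w_k \leq 0$ at once. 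On ${\Gamma_k}'$, writing $M_k := \sup_{{\Gamma_k}'} (u/w_k)$ and using $w_k > 0$, one gets $u \leq M_k w_k$ and therefore $u - \eps w_k \leq (M_k - \eps) w_k$; whenever $M_k < \eps$ the factor $(M_k - \eps)$ is negative and so $u - \eps w_k \leq 0$ on ${\Gamma_k}'$ as well. Combining the two pieces, for every such index $k$ the maximum principle yields $u - \eps w_k \leq 0$ throughout $D_k$.

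The decisive coordination step, which I expect to be the main obstacle, is to select a single index $k$ meeting three requirements at once: (i) $M_k < \eps$, so the boundary estimate above holds; (ii) $x_0 \in D_k$, which holds for all $k$ beyond some index by property (1) of the exhaustion; and (iii) $w_k(x_0) < w(x_0)$, which holds for all $k$ above the threshold $N_{x_0}$. Here the $\liminf$ hypothesis is exactly what saves the argument: since $\liminf_{k\to\infty} M_k \leq 0 < \eps$, there are infinitely many indices $k$ with $M_k < \eps$, so one can pick such a $k$ lying beyond both thresholds in (ii) and (iii).

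For that chosen $k$ the previous step gives $u(x_0) \leq \eps\, w_k(x_0) < \eps\, w(x_0)$, where $w(x_0) > w_k(x_0) > 0$ is a fixed finite positive number, independent of $\eps$. Letting $\eps \to 0^+$ then forces $u(x_0) \leq 0$, and since $x_0 \in D$ was arbitrary we conclude $u \leq 0$ in $D$. The only hypothesis I would need beyond what is stated is the regularity making the weak maximum principle applicable on each $D_k$, namely $u, w_k \in \mathcal{C}^2(D_k) \cap \mathcal{C}(\overline{D_k})$, which is implicit in the formulation.
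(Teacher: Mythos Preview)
Your argument is correct and is precisely the classical barrier argument from Protter--Weinberger. Note, however, that the paper does not actually supply a proof of this theorem: it is merely recalled with a citation to \cite{PW}, so there is no in-paper proof to compare against. Your write-up faithfully reproduces the standard proof from that reference.
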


%
\begin{lem}[Boundary Harnack inequality, \cite{JK}]\label{boundary_Harnack}
Let $D\subset\R^N$, $N\geq2$, be a Lipschitz domain and let $V$ an
open set such that $V\cap\partial D\neq\emptyset$. Suppose $W$ is a
domain such that $W\subset D$, $\overline{W}\subset V$ and let $P_0$
be a point in $W$. Then there is a constant $C>0$ such that if $u$
and $v$ are nonnegative harmonic functions in $D$ which vanish on
$V\cap\partial D$ and satisfy $u(P_0)\leq v(P_0)$ then $u(P)\leq C
v(P)$ for all $P\in W$.
\end{lem}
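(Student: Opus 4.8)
The plan is to reproduce the proof of the Jerison--Kenig boundary Harnack inequality \cite{JK}, whose architecture rests on the \emph{non-tangentially accessible} (NTA) structure enjoyed by every Lipschitz domain. First I would record the three structural facts furnished by the Lipschitz character of $D$: the interior corkscrew condition, the Harnack chain condition, and the ordinary interior Harnack inequality. Concretely, for every $Q\in\partial D$ and every scale $r$ below a fixed $r_0$ there is a \emph{corkscrew point} $A_r(Q)\in D$ with $\abs{A_r(Q)-Q}\le r$ and $\mathrm{dist}(A_r(Q),\partial D)\ge c\,r$; and any two points of $D$ lying at depth $\ge\delta$ and at mutual distance $\le M\delta$ can be joined by a Harnack chain whose length depends only on $M$, on $N$, and on the Lipschitz constant. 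These are exactly the ingredients that let me transfer quantitative information from the interior of $D$ out to its boundary.

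Next I would establish two localized comparison estimates on a Carleson box $B(Q,r)\cap D$, where $u$ and $v$ vanish on the surface ball $\Delta_{2r}(Q):=B(Q,2r)\cap\partial D$. The first is Carleson's estimate: a barrier argument, comparing the nonnegative harmonic function with a superharmonic comparison function adapted to the box, together with the interior Harnack inequality, yields $\sup_{B(Q,r)\cap D}u\le C\,u(A_r(Q))$. Refining this by the maximum principle, I would compare $u$ with a constant multiple of the harmonic measure $\omega^x(\Delta_{2r}(Q))$ to obtain the sharp upper bound $u(x)\le C\,u(A_r(Q))\,\omega^x(\Delta_{2r}(Q))$ throughout $B(Q,r)\cap D$. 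The genuinely harder step, which I expect to be \emph{the main obstacle}, is the matching lower bound for $v$, namely $v(x)\ge c\,v(A_r(Q))\,\omega^x(\Delta_{2r}(Q))$: this is the real content of the boundary Harnack principle and requires the doubling property of harmonic measure on Lipschitz domains together with a boundary weak maximum principle in the Carleson box, ensuring that a harmonic function vanishing on $\Delta_{2r}(Q)$ cannot decay faster than harmonic measure.

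Dividing the two estimates cancels the harmonic measure factor and produces the local ratio bound $u(x)/v(x)\le C\,u(A_r(Q))/v(A_r(Q))$ on $B(Q,r)\cap D$, while the interior Harnack inequality controls $u(A_r(Q))/v(A_r(Q))$ by the ratio $u/v$ at any fixed interior reference point, up to a constant. It then remains to globalize over $W$. Since $\overline W\subset V$ is compact, I would cover $\overline W\cap\partial D$ by finitely many surface balls on which the above local comparison holds, and cover the part of $\overline W$ lying at positive distance from $\partial D$ by a fixed compact subset of $D$ on which $u$ and $v$ are compared directly by interior Harnack along Harnack chains of uniformly bounded length. Finally I would link all reference points to $P_0$ by one further Harnack chain; because $W\subset\subset V$ and $D$ is Lipschitz, the number of chain steps, and hence the accumulated constant, depends only on $N$, on the Lipschitz character of $D$, and on the relative geometry of $W$, $V$ and $P_0$. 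Invoking the hypothesis $u(P_0)\le v(P_0)$ then gives $u(P)\le C\,v(P)$ for every $P\in W$, with a constant $C$ of exactly the claimed form.
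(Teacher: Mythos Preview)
The paper does not prove this lemma at all: it is quoted verbatim as a known result from Jerison--Kenig \cite{JK} and used as a black box in the proof of Proposition~\ref{passo1}. There is therefore no ``paper's own proof'' to compare against. Your proposal is a faithful outline of the standard Jerison--Kenig argument (corkscrew and Harnack chain structure of Lipschitz/NTA domains, Carleson's estimate, the two-sided comparison with harmonic measure, and globalization by compactness), and the steps you identify as the substantive ones---in particular the lower bound $v(x)\ge c\,v(A_r(Q))\,\omega^x(\Delta_{2r}(Q))$ and the doubling of harmonic measure---are indeed where the real work lies. In the context of the present paper, however, simply citing \cite{JK} is what is expected; reproducing the full NTA machinery would be disproportionate to its role here.
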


Thanks to these two preliminary results, we can state
\begin{prop}\label{passo1}
Let $u$ and $v$ be two different positive solutions to the problem
\eqref{problema_semicilindro}. Then $u=O(v)$.
\end{prop}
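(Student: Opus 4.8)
The plan is to establish the sharper two--sided bound $c\,v\le u\le C\,v$ throughout $C^R$, which in particular gives $u=O(v)$, by splitting $C^R$ into the bounded ``neck'' $\{0<x<1\}\times U^R$ and the half--cylinder $\{x\ge 1\}\times U^R$. Since the conclusion is unchanged when $u$ is multiplied by a positive constant, I normalize $u$ and $v$ so that they agree at a fixed interior reference point $(x_0,y_0)$ with $x_0\ge 1$.

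On the neck the bound is immediate from the boundary Harnack inequality (Lemma~\ref{boundary_Harnack}): I apply it on the bounded Lipschitz domain $D:=C^R\cap\{x<2\}$, with $W:=C^R\cap\{x<1\}$ and $V$ a small neighbourhood of $\overline{W}$, so that $u$ and $v$ are nonnegative harmonic functions on $D$ vanishing on the Dirichlet portion $V\cap\partial D\subset(\{x=0\}\times U^R)\cup(\{0\le x\le 1\}\times\partial U^R)$ and $\overline W\subset V$; this yields $c_0\,v\le u\le C_0\,v$ on $W$, in particular on the slice $\{x=1\}\times U^R$.

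On the half--cylinder one must control the growth as $x\to+\infty$, and here the natural comparison is the explicit positive harmonic function $g(x,y):=\e^{\sqrt{\lambda_1}\,x}\psi_1(y)$ --- with $\lambda_1,\psi_1$ the first eigenpair of problem (ii) in \eqref{eigenvalue_eq_connecting_section} --- which vanishes on the lateral boundary. Exploiting the invariance of $C^R$ under translations in $x$, I would apply Lemma~\ref{boundary_Harnack} to the pairs $\bigl(u(\cdot+s,\cdot),g(\cdot+s,\cdot)\bigr)$ and $\bigl(v(\cdot+s,\cdot),g(\cdot+s,\cdot)\bigr)$ on the \emph{fixed} bounded Lipschitz domain $\{|x|<1\}\times U^R$; the constants produced depend only on this domain, hence are uniform in $s\ge 1$, and after rescaling $g$ to match at the point $(s,y_0)$ one obtains, uniformly in $s\ge1$ and $y\in U^R$,
\[
c\,\frac{u(s,y_0)}{\psi_1(y_0)}\,\psi_1(y)\ \le\ u(s,y)\ \le\ C\,\frac{u(s,y_0)}{\psi_1(y_0)}\,\psi_1(y),
\]
and the analogous estimate for $v$. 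To compare the numbers $u(s,y_0)$ and $v(s,y_0)$ I would pass to the first Fourier coefficient $\gamma_u(x):=\int_{U^R}u(x,y)\psi_1(y)\,dy$: integrating by parts in $y$ (the boundary terms vanish because $u(x,\cdot)$ and $\psi_1$ both vanish on $\partial U^R$) one gets $\gamma_u''=\lambda_1\gamma_u$ and $\gamma_u(0)=0$, hence $\gamma_u(x)=a_u\sinh(\sqrt{\lambda_1}\,x)$; positivity of $u$ and of $\psi_1$ forces $a_u>0$, so $\gamma_u(x)$ is comparable to $\e^{\sqrt{\lambda_1}\,x}$ for $x\ge 1$. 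Integrating the displayed cross--sectional estimate against $\psi_1$ shows that $u(s,y_0)$ is comparable to $\gamma_u(s)$, hence to $\e^{\sqrt{\lambda_1}\,s}$, and likewise $v(s,y_0)$; therefore $u(s,y)$ and $v(s,y)$ are both comparable to $\e^{\sqrt{\lambda_1}\,s}\psi_1(y)$ on $\{x\ge1\}$. Combined with the neck estimate this gives $c\,v\le u\le C\,v$ on all of $C^R$.

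The delicate point is precisely this a priori control of the growth at $+\infty$: a positive solution could conceivably grow faster than $\e^{\sqrt{\lambda_1}x}$, and then the Phragm\'en--Lindel\"of principle (Theorem~\ref{PL-principle}) applied to $u-C_0v$ on $\{x>1\}\times U^R$ cannot be closed, since its growth hypothesis --- tested against a comparison sequence built from $v$ --- already presupposes $u\lesssim v$ near infinity. The argument above breaks this circularity by reading the exact exponential rate $\e^{\sqrt{\lambda_1}x}$ off the elementary ODE satisfied by the first Fourier mode together with the sign of $u$, and then redistributing it over the cross section via the (translation--uniform) boundary Harnack inequality; alternatively, once this rate is in hand, the half--cylinder estimate can instead be concluded by a direct application of Theorem~\ref{PL-principle} with $g$ as comparison function.
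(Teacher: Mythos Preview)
Your argument is correct and in fact yields the stronger two-sided bound $c\,v\le u\le C\,v$. The route is genuinely different from the paper's. The paper argues without ever identifying the growth rate: it applies the Phragm\'en--Lindel\"of principle symmetrically to $u/v$ and to $v/u$ (if $\liminf_k\sup_{\Gamma_k'}u/v\le0$ then $u\le0$, and likewise with the roles reversed), and uses the boundary Harnack inequality on unit slabs to control the oscillation of the ratio $u/v$ along the cylinder. You instead read off the growth rate explicitly from the ODE $\gamma_u''=\lambda_1\gamma_u$, $\gamma_u(0)=0$, $\gamma_u>0$ satisfied by the first Fourier coefficient, and then spread it over the cross section via a translation-uniform boundary Harnack. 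Your approach is more direct and already delivers the precise asymptotic $u\sim C\,\e^{\sqrt{\lambda_1}x}\psi_1(y)$, which the paper reaches only later (Proposition~\ref{step3}) through the Almgren monotonicity machinery; the paper's soft argument, by contrast, makes no use of the product structure of $C^R$ and carries over verbatim to the half-space case treated in \S2.2, where no $L^2$ eigenbasis on the section is available. One small slip: your reference point $(x_0,y_0)$ with $x_0\ge1$ may lie outside $W=C^R\cap\{x<1\}$; simply enlarge $W$ or transfer the normalization via the interior Harnack inequality.
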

\begin{proof}
According to the notation in Theorem \eqref{PL-principle}, let $D_k$
denotes the rectangle $\{(x,y)\in C^R,\ k-1<x<k \ \text{and}\ y\in
U^R\}$ and $\Gamma_k':=\{(k-1,y),\ y\in U^R\} \cup \{(k,y),\ y\in
U^R\}$. We can claim that
\begin{equation}\label{PL_pos}
\liminf_{k\rightarrow +\infty} \sup_{\Gamma_k'}
\dfrac{u(x,y)}{v(x,y)} >0.
\end{equation}
If not, Theorem
\eqref{PL-principle} would apply with $w_k=v\chi_{D_k}+\eps$ where
$\eps$ is any positive constant. Thus, we would obtain $u\leq 0$ and
then $u=0$, a contradiction.

We define $$ b_k = \max_{\Gamma_k'}\dfrac{u(x,y)}{v(x,y)} \qquad a_k
= \min_{\Gamma_k'}\dfrac{u(x,y)}{v(x,y)}. $$ Then, Equation
\eqref{PL_pos} implies $b_k \geq C>0$ for $k$ large enough and then,
by Lemma \eqref{boundary_Harnack} $\frac{u(x,y)}{v(x,y)} \leq C$ in
$C^R$. We can rewrite the previous inequality as
$\abs{\frac{u(x_1,y_1)}{v(x_1,y_1)}- \frac{u(x_2,y_2)}{v(x_2,y_2)}}
\leq C \frac{u(x_2,y_2)}{v(x_2,y_2)}$ for any $(x_1,y_1),\ (x_2,y_2)
\in C^R$ in order to obtain
\begin{equation}\label{ratio_bk_ak}
1 \leq \dfrac{b_k}{a_k} \leq C.
\end{equation}
This means that the two sequences $a_k$ and $b_k$ share the same
asymptotic behavior. Moreover, their divergence to $\infty$ cannot
occur. If they diverged to $+\infty$, then the inverse quotient
$\frac{v}{u}$ would be uniformly convergent to zero as $x\rightarrow
+\infty$, Theorem \eqref{PL-principle} would apply and provide the
contradiction $v\leq 0$.

In this way, they both cannot converge to zero, otherwise
\eqref{PL_pos} would be violated, and then
\begin{equation}\label{ratio_u_v}
C_1 \leq \dfrac{u}{v} \leq C_2
\end{equation}
for some positive constants $C_1$ and $C_2$.
\end{proof}

%

\begin{prop}\label{step3}
 Any solution to \eqref{problema_semicilindro} is asymptotic to
$\e^{\sqrt{\lambda_k} x}\psi_k(y)$ as $x\rightarrow\infty$ uniformly with respect to $y\in U^R$ for some $k\in\N$,
where $\psi_k$ denotes one of eigenfunctions relative to the $k$-th eigenvalue of the problem 
\begin{equation}\label{k-esima_autofunzione}
\left\{ \begin{array}{ll} \Delta \psi_k=\lambda_k\psi_k &\text{in $U^R$} \\
\psi_k=0 &\text{on $\partial U^R$.}
\end{array}\right.
\end{equation}
\end{prop}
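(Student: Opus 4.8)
The plan is to separate variables along the eigenbasis $\{\psi_k\}$ of the cross-section $U^R$ and to identify which mode dominates as $x\to\infty$. First I would expand the solution $u$ in the Hilbert basis of $L^2(U^R)$ given by the Dirichlet eigenfunctions of \eqref{k-esima_autofunzione}: writing $c_k(x):=\int_{U^R} u(x,y)\psi_k(y)\,dy$, the equation $\Delta u=0$ together with the Dirichlet condition on the lateral boundary $\partial C^R$ forces each Fourier coefficient to satisfy the ODE $c_k''(x)-\lambda_k c_k(x)=0$, hence $c_k(x)=\alpha_k \e^{\sqrt{\lambda_k}x}+\beta_k \e^{-\sqrt{\lambda_k}x}$. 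The justification that differentiation passes under the integral and that the expansion converges (in $H^1$ on slabs, then pointwise by elliptic regularity) is standard interior estimates plus the Dirichlet boundary condition; I would record it but not belabor it.

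Next I would use positivity and the two-sided bound \eqref{ratio_u_v} from Proposition \ref{passo1}, comparing $u$ with the explicit solution $v=(\e^{\sqrt{\lambda_1}x}-\e^{-\sqrt{\lambda_1}x})\psi_1(y)$, to show $u$ grows \emph{exactly} like $\e^{\sqrt{\lambda_1}x}$: neither faster nor slower. This rules out the possibility that infinitely many coefficients $c_k$ with distinct $\lambda_k$ carry nonzero growing parts, and it pins the leading exponential rate. More precisely, if $k^\ast$ is the smallest index with $\alpha_{k^\ast}\neq0$, then $c_k(x)e^{-\sqrt{\lambda_{k^\ast}}x}$ has a finite nonzero limit for $k=k^\ast$ and tends to $0$ for $k<k^\ast$, while the higher modes decay relative to it; summing the tail and controlling it with an $L^2$-in-$y$ estimate on each slab (using that $\sum_k c_k(x)^2$ is comparable to $\int_{U^R}u(x,y)^2\,dy$, which by \eqref{ratio_u_v} is comparable to $\e^{2\sqrt{\lambda_1}x}$) gives $u(x,y)\,e^{-\sqrt{\lambda_{k^\ast}}x}\to \alpha_{k^\ast}\psi_{k^\ast}(y)$ in $L^2(U^R)$, and then uniformly in $y$ after one more application of elliptic estimates on unit slabs. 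This is the step I expect to be the main obstacle: turning the $L^2$-in-$y$ convergence of the normalized solution into \emph{uniform} convergence, and making sure the tail $\sum_{k>k^\ast}$ is genuinely negligible and does not, for instance, contribute a comparable term through its (a priori possibly large) coefficients $\alpha_k$; the Almgren-quotient monotonicity hinted at in the preamble, or alternatively a direct Harnack/barrier argument near the lateral boundary, is what makes the normalized limit nondegenerate (i.e.\ $\alpha_{k^\ast}\psi_{k^\ast}\not\equiv0$) rather than identically zero.

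Finally, I would note that $\psi_{k^\ast}$ inherits the Dirichlet condition on $\partial U^R$ and solves \eqref{k-esima_autofunzione} with eigenvalue $\lambda_{k^\ast}$, so with $k:=k^\ast$ the solution $u$ is asymptotic to $\e^{\sqrt{\lambda_k}x}\psi_k(y)$ uniformly in $y\in U^R$, as claimed. (In the later use of this proposition for positive solutions one would additionally invoke positivity of $u$ and of the limit to force $k=1$, but the present statement only asserts asymptotics to \emph{some} mode, so identifying the minimal growing index suffices.)
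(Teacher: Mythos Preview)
Your separation-of-variables route is genuinely different from the paper's proof, which proceeds through the Almgren frequency: the paper shows (Lemma~\ref{lemma_N_monotona}) that $N(v)(x)$ is monotone up to an integrable correction, hence has a finite limit $\overline N$; a blow-up sequence $v_\xi(x,y)=v(x+\xi,y)/\|v(\xi,\cdot)\|_{L^2}$ is then shown (Lemma~\ref{lemma_convergenza_v_xi}) to converge in $\mathcal C^1$ on compact sets to a harmonic function $w$ on the full cylinder with $N(w)\equiv\overline N$, and an ODE argument (Lemma~\ref{lemma_N_costante}) classifies such $w$ as $\e^{\sqrt{\lambda_k}x}\psi_k$.

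Your Fourier argument is correct and in fact stronger and easier than you present it. Once you invoke positivity and the two-sided bound \eqref{ratio_u_v}, the upper bound $u\le C_2 v$ gives $\sum_k c_k(x)^2=\int_{U^R}u(x,\cdot)^2\le C\e^{2\sqrt{\lambda_1}x}$, hence $|c_k(x)|\le C\e^{\sqrt{\lambda_1}x}$ for every $k$; since $c_k(x)\sim\alpha_k\e^{\sqrt{\lambda_k}x}$ and $\lambda_k>\lambda_1$ for $k\ge 2$, this forces $\alpha_k=0$ for \emph{all} $k\ge 2$, not merely ``infinitely many''. Combined with $c_k(0)=0$ (so $\beta_k=-\alpha_k$), the entire tail vanishes identically and you obtain $u=\alpha_1(\e^{\sqrt{\lambda_1}x}-\e^{-\sqrt{\lambda_1}x})\psi_1$ \emph{exactly}: this is already Proposition~\ref{step1}, and the asymptotic claim is a triviality with no tail to estimate and no $L^2$-to-uniform upgrade needed. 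Your stated ``main obstacle'' is therefore illusory in the positive case; note also that your phrase ``the higher modes decay relative to it'' would be false if some $\alpha_k\neq 0$ for $k>k^\ast$ (they would dominate), so the upper bound is doing the essential work, not a relative-decay heuristic.

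What each approach buys: yours is elementary and exploits the product structure $C^R=(0,\infty)\times U^R$ to bypass the frequency machinery entirely. The paper's method is more robust: it transfers verbatim to the half-space case and to the junction domain $\Omega$ (Theorem~\ref{teorema_omega_energia_finita_da_una_parte}), and, crucially, it extends to sign-changing solutions with prescribed Almgren limit (Remark~\ref{rem:sign_changing}), where no pointwise comparison like \eqref{ratio_u_v} is available to kill the higher modes.
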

To prove this last step we need several preliminary results, which
are stated in Lemma \eqref{lemma_N_monotona}, Lemma
\eqref{lemma_N_costante} and Lemma \eqref{lemma_convergenza_v_xi}.

Being $v$ any solution to \eqref{problema_semicilindro}, we recall the Almgren frequancy function:
\begin{equation}\label{enne}
 N(v)(x):= \dfrac{\int_{\Omega_x}\abs{\nabla v}^2}{\int_{\Gamma_x}v^2},
\end{equation}
where $\Omega_x:=\{(\xi,\eta)\in \Omega :\ 0<\xi<x \}$ and $\Gamma_x:=\{(x,y):\ y\in U^R\}$.

\begin{lem}\label{lemma_N_monotona}
 Given a solution $v$ to \eqref{problema_semicilindro}, the function $N(v)(x) -\frac{C}{\sqrt{\lambda_1}}\e^{-\sqrt{\lambda_1}x}$
is monotone increasing with respect to $x$.
\end{lem}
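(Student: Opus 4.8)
The plan is to prove the monotonicity of the modified frequency function by differentiating $N(v)(x)$ and using the standard Almgren-type computation, tracking carefully the error term that arises because $v$ does not vanish identically on the lateral boundary portions that enter through integration by parts — rather, the relevant boundary contribution is governed by the Poincaré inequality on $U^R$ with constant $\lambda_1$. First I would introduce the two pieces
\[
D(x) := \int_{\Omega_x} \abs{\nabla v}^2, \qquad H(x) := \int_{\Gamma_x} v^2,
\]
so that $N(x) = D(x)/H(x)$. Differentiating in $x$ one gets $H'(x) = \int_{\Gamma_x} \partial_x(v^2) = 2\int_{\Gamma_x} v\, v_x$ and, since $v$ is harmonic and vanishes on $\partial C^R$, an integration by parts over the slab bounded by $\Gamma_x$ gives $D'(x) = \int_{\Gamma_x}\abs{\nabla v}^2$ together with the Rellich–Nečas / Pohozaev identity that expresses $D(x)$ itself as a boundary integral. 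The key identities are $D(x) = \int_{\Gamma_x} v\, v_x$ (from harmonicity and the Dirichlet condition on the lateral boundary) and the differentiated version
\[
D'(x) = \int_{\Gamma_x}\bigl( v_x^2 - \abs{\nabla_y v}^2\bigr) + 2\int_{\Gamma_x} v_x^2,
\]
which follows by differentiating $D(x)=\int_0^x\!\int_{U^R}\abs{\nabla v}^2$ and reworking the slab integral via the harmonic equation.

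Next I would compute the logarithmic derivative $N'/N = D'/D - H'/H$ and recognize that, up to the Poincaré defect, this is a sum of Cauchy–Schwarz differences that would be nonnegative if $v$ were genuinely a harmonic function on a cone or cylinder with homogeneous boundary data all around. Concretely, the Cauchy–Schwarz inequality $\bigl(\int_{\Gamma_x} v v_x\bigr)^2 \le \bigl(\int_{\Gamma_x} v^2\bigr)\bigl(\int_{\Gamma_x} v_x^2\bigr)$ handles the "main" term, while the term $\int_{\Gamma_x}\abs{\nabla_y v}^2$ is controlled from below by $\lambda_1 \int_{\Gamma_x} v^2$ using the eigenvalue problem \eqref{eigenvalue_eq_connecting_section}(ii): since $v(x,\cdot)\in H_0^1(U^R)$ for each fixed $x$, the variational characterization of $\lambda_1$ gives $\int_{U^R}\abs{\nabla_y v}^2 \ge \lambda_1 \int_{U^R} v^2$. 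Assembling these estimates, I expect to arrive at a differential inequality of the form
\[
N'(x) \ge -C\,\e^{-\sqrt{\lambda_1}\,x}
\]
for a constant $C$ depending on $v$ (coming from the lower-order remainder, which decays because $v$, being bounded by a multiple of the model solution $\e^{\sqrt{\lambda_1}x}\psi_1$ by Proposition \ref{passo1}, forces the "orthogonal-to-$\psi_1$" part of $v$ to be exponentially smaller). Integrating, $N(x) + \frac{C}{\sqrt{\lambda_1}}\e^{-\sqrt{\lambda_1}x}$ has nonnegative derivative, which is exactly the claim.

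The main obstacle I anticipate is making the error term genuinely of size $\e^{-\sqrt{\lambda_1}x}$ rather than merely bounded: this requires showing that the component of $v(x,\cdot)$ orthogonal to $\psi_1$ in $L^2(U^R)$ decays at the faster rate $\e^{-\sqrt{\lambda_2}x}$ (or at least strictly faster than $\e^{\sqrt{\lambda_1}x}$ grows relative to it), so that the "Poincaré defect" $\int_{\Gamma_x}\abs{\nabla_y v}^2 - \lambda_1\int_{\Gamma_x} v^2$ is small compared with $H(x)$ at the correct exponential rate. For this I would expand $v(x,y) = \sum_k c_k(x)\psi_k(y)$, note that each coefficient solves $c_k'' = \lambda_k c_k$ hence $c_k(x) = A_k \e^{\sqrt{\lambda_k}x} + B_k\e^{-\sqrt{\lambda_k}x}$, and use the a priori two-sided bound \eqref{ratio_u_v} from Proposition \ref{passo1} (comparing $v$ with the model solution \eqref{v}) to conclude $A_k = 0$ for $k\ge 2$ — otherwise $v$ would grow faster than $\e^{\sqrt{\lambda_1}x}\psi_1$. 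Once that structural fact is in hand, the remainder $\sum_{k\ge2} c_k(x)\psi_k$ is $O(\e^{-\sqrt{\lambda_1}x})$ (in fact exponentially small), the differential inequality closes, and the lemma follows by a single integration. An alternative, slightly softer route avoids the eigenfunction expansion by working directly with the differential inequality for $N$ and absorbing the lower-order slab terms using \eqref{ratio_u_v}, but the bookkeeping is essentially the same.
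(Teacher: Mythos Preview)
Your setup ($D$, $H$, the formula $N'=D'H^{-1}-DH'/H^2$, Cauchy--Schwarz, and the final appeal to Proposition~\ref{passo1}) is the right scaffolding, but you have mis-located the source of the error term. The function $v$ \emph{does} vanish on the entire lateral boundary of $C^R$ (it solves the Dirichlet problem~\eqref{problema_semicilindro}), so there is no ``Poincar\'e defect'' to track on $\Gamma_x$. The extra term comes instead from the \emph{fixed} initial section $\Gamma_0$. Testing $\Delta v=0$ against $v_x$ over $\Omega_x$ gives $\int_{\Omega_x}\nabla v\cdot\nabla v_x=\int_{\Gamma_x} v_x^{\,2}-\int_{\Gamma_0}v_x^{\,2}$; computing the same integral as $\tfrac12\int_{\Omega_x}\partial_x|\nabla v|^2$ and using that $\nabla_y v=0$ on $\Gamma_0$ (since $v\equiv 0$ there) yields the key identity
\[
\int_{\Gamma_x} v_x^{\,2}\;=\;\int_{\Gamma_x} |\nabla_y v|^{\,2}\;+\;\int_{\Gamma_0} v_x^{\,2}.
\]
Plugging this into $D'(x)=\int_{\Gamma_x}(v_x^2+|\nabla_y v|^2)=2\int_{\Gamma_x}v_x^2-\int_{\Gamma_0}v_x^2$ and applying Cauchy--Schwarz gives immediately
\[
N'(x)\;\ge\;-\,\frac{\int_{\Gamma_0} v_x^{\,2}}{\int_{\Gamma_x} v^{\,2}},
\]
a \emph{fixed} numerator over $H(x)$. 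Proposition~\ref{passo1} then makes the denominator grow like $\e^{\sqrt{\lambda_1}x}$ (indeed like $\e^{2\sqrt{\lambda_1}x}$), and the lemma follows in one line.

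Your proposed detour---expand $v=\sum_k c_k(x)\psi_k$, solve $c_k''=\lambda_k c_k$, use~\eqref{ratio_u_v} to force $A_k=0$ for $k\ge 2$, and then estimate the remainder---would work, but it does much more than the lemma asks: once you know $v=A_1\e^{\sqrt{\lambda_1}x}\psi_1+\sum_k B_k\e^{-\sqrt{\lambda_k}x}\psi_k$, you have essentially proved Proposition~\ref{step3} outright, and the Almgren monotonicity argument becomes superfluous. So as written, your plan either misidentifies the actual error (the $\Gamma_0$ boundary contribution, not a Poincar\'e gap) or overshoots into a direct proof of the downstream proposition.
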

\begin{proof}
It is simple to see that
\begin{eqnarray*}
 D'(x)=\int_{\Gamma_x}\abs{\nabla v}^2\\
 H'(x)=\int_{\Gamma_x}2\,v\,v_x.
\end{eqnarray*}
Multiplying the equation by $v_x$ and integrating by parts we obtain
$$ \int_{\Omega_x}\nabla v\,\nabla v_x = \int_{\Gamma_x}{v_x}^2 - \int_{\Gamma_0}{v_x}^2; $$
whereas differentiating it and multiplying it by $v$ we obtain
$$ \int_{\Omega_x}\nabla v\,\nabla v_x = \int_{\Gamma_x}v\,v_{xx} = -\int_{\Gamma_x}v\,\Delta_y v = \int_{\Gamma_x}{v_y}^2; $$
from which
$$ \int_{\Gamma_x}{v_x}^2 = \int_{\Gamma_x}{v_y}^2 + \int_{\Gamma_0}{v_x}^2. $$
 Let us compute the derivative
\begin{eqnarray*}
 \dfrac{d}{dx}N(x) &=& \dfrac{\int_{\Gamma_x}{v_x}^2+{v_y}^2}{\int_{\Gamma_x}v^2}
- 2\, \dfrac{\left(\int_{\Gamma_x}v\,v_x\right)^2}{\left(\int_{\Gamma_x}v^2\right)^2} \\
&=& \dfrac{2\int_{\Gamma_x}{v_x}^2 - \int_{\Gamma_0}{v_x}^2}{\int_{\Gamma_x}v^2}
- 2\, \dfrac{\left(\int_{\Gamma_x}v\,v_x\right)^2}{\left(\int_{\Gamma_x}v^2\right)^2}\\
&\geq& -\dfrac{\int_{\Gamma_0}{v_x}^2}{\int_{\Gamma_x}v^2}\\
&\geq& -\dfrac{C}{\e^{\sqrt{\lambda_1}x}}
\end{eqnarray*}
for some positive $C$: the first inequality is given by the H\"{o}lder inequality and the second one is implied by Proposition \eqref{passo1}.
\end{proof}

\begin{rem}\label{limitatezza_N}
Under our hypothesis we can claim $N(v)(x)$ admits a finite limit as
$x\rightarrow\infty$. Indeed, it admits a limit in view of Lemma \eqref{lemma_N_monotona}, and
 such a limit is finite since $v$ is $O(\e^{\sqrt{\lambda_1}x}\psi_1(y))$ from Proposition \eqref{passo1},
so that $N(v)$ is a bounded function from above.
\end{rem}

In order to detect $\lim_{x\rightarrow+\infty} N(v)(x)$ we introduce the sequence of normalized functions
$$v_\xi(x,y) := \dfrac{v(x+\xi,y)}{\left(\int_{\Gamma_\xi} v^2(\xi,y)\right)^{1/2}}
\qquad \textrm{for $\xi\in\R$, $x\in(0,1)$, $y\in U^R$}.$$

\begin{lem}\label{lemma_convergenza_v_xi}
 As $\xi\rightarrow\infty$ the sequence $\{v_\xi\}_{\xi}$ converges $\mathcal C^1$-uniformly on compact sets of the
cylinder $\{(x,y)\in\R^N:\ x\in\R\ \textrm{and}\ y\in \overline
U^R\}$ to a function harmonic on the cylinder whose $N(x)$ is
identically constant.
\end{lem}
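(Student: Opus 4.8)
The plan is to establish three things for the rescaled family $\{v_\xi\}$: (i) uniform local $\mathcal C^1$ bounds, so that a subsequence converges; (ii) identification of the limit as a global harmonic function on the full cylinder $\R\times U^R$ vanishing on the lateral boundary; and (iii) the constancy of the Almgren frequency of the limit. First I would exploit the normalization $\int_{\Gamma_\xi}v^2(\xi,y)\,dy=1$ together with the $O(\e^{\sqrt{\lambda_1}x}\psi_1(y))$ bound from Proposition \ref{passo1}: this controls $\int_{\Gamma_{x+\xi}}v^2$ uniformly for $x$ in a compact interval, hence (by the boundedness of $N$ from Remark \ref{limitatezza_N} and the definition of $D(x)$) it controls the Dirichlet energy $\int_{\Omega_{x+\xi}\setminus\Omega_\xi}|\nabla v|^2$ on slabs of fixed width. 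Translating back, this gives a uniform $H^1$ bound for $v_\xi$ on any slab $\{a<x<b\}$, and then interior elliptic estimates up to the (smooth, flat) lateral boundary promote this to uniform $\mathcal C^{1,\alpha}_{\mathrm{loc}}$ bounds. Arzelà--Ascoli then yields a subsequence $v_{\xi_n}\to v_\infty$ in $\mathcal C^1_{\mathrm{loc}}$ on the closed cylinder, with $v_\infty$ harmonic and $v_\infty=0$ on $\R\times\partial U^R$.

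Next I would show $v_\infty\not\equiv 0$, so that its frequency is well defined, and then compute that frequency. Nontriviality is immediate from the normalization: passing to the limit in $\int_{\Gamma_0}v_{\xi_n}^2 = \int_{\Gamma_{\xi_n}}v^2(\xi_n,\cdot)\big/\int_{\Gamma_{\xi_n}}v^2(\xi_n,\cdot)=1$ gives $\int_{\Gamma_0}v_\infty^2=1$ by $\mathcal C^1$ (hence $L^2$) convergence on $\Gamma_0$. For the constancy of $N(v_\infty)(x)$, the key observation is that the Almgren frequency is scaling- and translation-covariant: a direct change of variables shows $N(v_\xi)(x) = N(v)(x+\xi)$ for $x>0$ (both numerator and the normalizing denominator rescale by the same factor, which cancels). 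Since $N(v)(t)$ has a finite limit $l^R$ as $t\to+\infty$ (Remark \ref{limitatezza_N}), we get $N(v_{\xi_n})(x)\to l^R$ for every fixed $x>0$. On the other hand, $\mathcal C^1_{\mathrm{loc}}$ convergence lets me pass to the limit in the defining ratio \eqref{enne} for $v_{\xi_n}$ (the denominators $\int_{\Gamma_x}v_{\xi_n}^2$ stay bounded away from $0$ near $x=0$ by the previous step and by the monotonicity-type control, and on any compact $x$-interval away from degeneracy), obtaining $N(v_\infty)(x)=l^R$ for all $x>0$. By the unique continuation / real-analyticity of harmonic functions and the monotonicity formula of Lemma \ref{lemma_N_monotona} applied to $v_\infty$ on the cylinder (where the ``error'' term $C\e^{-\sqrt{\lambda_1}x}$ is absent because $\int_{\Gamma_0}(v_\infty)_x^2$ can be pushed to $0$ by translating the base point to $-\infty$, or simply because $N(v_\infty)$ is already constant on an interval and the formula forces it constant everywhere), $N(v_\infty)\equiv l^R$ on all of $\R$.

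I would then need to make sure the limit does not depend on the chosen subsequence and that convergence holds along the full family $\xi\to+\infty$; this follows because any subsequential limit has the same constant frequency $l^R$ and, as is shown in the subsequent Lemma \ref{lemma_N_costante}, a harmonic function on the cylinder with constant frequency is (up to a constant) of the separated-variables form $\e^{\sqrt{\lambda_k}x}\psi_k(y)$, normalized by $\int_{\Gamma_0}v_\infty^2=1$ — hence uniquely determined once one also pins down the limiting ``phase'', which the normalization of $v_\xi$ does. For the statement as phrased, however, it suffices to exhibit one such convergent subsequence, which is what I described.

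The main obstacle I anticipate is the uniform $H^1$ (equivalently, uniform energy) bound for $v_\xi$ on fixed-width slabs, i.e. controlling $\int_{\Omega_{\xi+1}\setminus\Omega_\xi}|\nabla v|^2$ by a fixed multiple of $\int_{\Gamma_\xi}v^2$. This is exactly where one must use that $N(v)$ is bounded \emph{and} that $H(x)=\int_{\Gamma_x}v^2$ has controlled growth (not too slow): from $D'(x)=\int_{\Gamma_x}|\nabla v|^2$ and $N$ bounded one gets $D(x)\le C\,H(x)$, and then a doubling-type estimate for $H$ — available precisely because $N$ is bounded, via the logarithmic derivative $\frac{H'}{H}=\frac{2D}{H}+\text{l.o.t.}$ being bounded — upgrades this to $D(\xi+1)-D(\xi)\le C\,H(\xi)$, which is the slab energy bound after normalization. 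Handling the lower-order exponential corrections coming from Lemma \ref{lemma_N_monotona} cleanly is the only delicate point; everything else is standard elliptic compactness and a covariance computation.
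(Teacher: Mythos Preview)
Your proposal is correct and follows essentially the same route as the paper: translation covariance $N(v_\xi)(x)=N(v)(x+\xi)$, boundedness of $N$ (Remark \ref{limitatezza_N}), a uniform bound on $H(x+\xi)/H(\xi)$ for $x$ in compact sets (the paper invokes the Harnack inequality directly, you use the equivalent two-sided estimate of Proposition \ref{passo1}), and elliptic compactness to extract a $\mathcal C^1$-convergent subsequence whose limit has constant frequency $\overline N=\lim_{t\to\infty}N(v)(t)$. You are in fact more explicit than the paper on the nontriviality of the limit and on why all subsequential limits coincide (via Lemma \ref{lemma_N_costante} plus the normalization $\int_{\Gamma_0}v_\infty^2=1$), both of which the paper leaves tacit.
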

\begin{proof}
 First we observe $N(v_\xi)(x)=N(v)(x+\xi)\leq \overline{N}$ for all $x\in(0,1)$ and for all $\xi\in\R$,
thanks to the definition of $v_\xi$ and to Remark
(\ref{limitatezza_N}). Thus, $\int_{\Omega_x}\abs{\nabla v_\xi}^2
\leq \overline{N}\int_{\Gamma_x}{v_\xi}^2$ where we recall
$$\int_{\Gamma_x}{v_\xi}^2 = \dfrac{\int_{U^R}v^2(x+\xi,y)\,dy}{\int_{U^R}v^2(\xi,y)\,dy}.$$
Via Harnack inequality, if $x$ ranges in a compact set, the previous
ratio is bounded from above by a fixed constant, then also the
$H^1$-norm is uniformly bounded from above. Thus, there exists a
subsequence at least $\mathcal C^1$-uniformly convergent to a
function $w$ which is harmonic on the whole cylinder. It holds for any
fixed $x\in\R$ $N(v_\xi)(x)=N(v)(x+\xi)\rightarrow \overline{N}$ as
$\xi\rightarrow\infty$, and then
$$\lim_{\xi\rightarrow\infty}N(v_\xi)(x)=\overline{N} \qquad \forall\ x\in\R.$$
Moreover this happens for any convergent subsequence. Then we can
conclude the whole sequence $v_\xi$ is $\mathcal C^1$-uniformly
convergent to a function $w$ which is harmonic on the whole cylinder and
has $N(x)$ identically constant.
\end{proof}

\begin{lem}\label{lemma_N_costante}
 Let $w$ be a solution to
\begin{eqnarray*}
\left\{ \begin{array}{ll}
 \Delta w=0 &\text{on $\{(x,y)\in\R^N:\ x\in\R\ \text{and}\ y\in U^R\}$}\\
w=0 &\text{if $y\in\partial U^R$}
\end{array} \right.
\end{eqnarray*}
with $\int_{y\in\overline U^R \atop x\leq \overline{x}}\abs{\nabla w}^2 < \infty$ for all $\overline{x}$.
Then $N(w)(x)$ is identically constant in $x$ if and only if $w(x,y)=\e^{\sqrt{\lambda_k} x}\psi_k(y)$ for some $k\in\N$,
being $\lambda_k$ the $k$-th eigenvalue of problem \eqref{k-esima_autofunzione} and $\psi_k$ one of its relative eigenfunctions.
\end{lem}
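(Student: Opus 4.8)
The plan is to prove both implications by separation of variables. For the ``if'' direction I would simply substitute $w(x,y)=\e^{\sqrt{\lambda_k}x}\psi_k(y)$, normalizing $\int_{U^R}\psi_k^2=1$; using $\int_{U^R}\abs{\nabla_y\psi_k}^2=\lambda_k$ one finds $\int_{\Gamma_\xi}\abs{\nabla w}^2=2\lambda_k\e^{2\sqrt{\lambda_k}\xi}$, hence, integrating in $\xi\in(-\infty,x)$ (the improper integral converges — this is exactly where the local finiteness of the energy enters), $\int_{\Omega_x}\abs{\nabla w}^2=\sqrt{\lambda_k}\,\e^{2\sqrt{\lambda_k}x}$, while $\int_{\Gamma_x}w^2=\e^{2\sqrt{\lambda_k}x}$. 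Therefore $N(w)(x)\equiv\sqrt{\lambda_k}$.

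For the converse, the key step is to expand $w$ along the Dirichlet eigenbasis $\{\psi_k\}$ of $-\Delta$ on $U^R$, which here is bounded, so the spectrum is discrete and the $\psi_k$ may be taken orthonormal. For each fixed $x$ the slice $w(x,\cdot)$ lies in $H^1_0(U^R)$: it vanishes on $\partial U^R$ and has finite Dirichlet energy on $\Gamma_x$ by hypothesis and elliptic regularity. Writing $w(x,y)=\sum_k c_k(x)\psi_k(y)$ with $c_k(x)=\int_{U^R}w(x,y)\psi_k(y)\,dy$, differentiating twice under the integral sign and using $\Delta w=0$ together with Green's formula and the boundary conditions gives $c_k''(x)=\lambda_k c_k(x)$, so that $c_k(x)=A_k\e^{\sqrt{\lambda_k}x}+B_k\e^{-\sqrt{\lambda_k}x}$.

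Next I would invoke the energy bound at $-\infty$. By Parseval, $\int_{\Gamma_\xi}\abs{\nabla w}^2=\sum_k\bigl(c_k'(\xi)^2+\lambda_k c_k(\xi)^2\bigr)$, and since this is integrable for $\xi\le\overline x$ while the contribution of $B_k\e^{-\sqrt{\lambda_k}\xi}$ grows like $\e^{-\sqrt{\lambda_k}\xi}$ as $\xi\to-\infty$, with non-integrable square unless it vanishes, one concludes $B_k=0$ for every $k$. Hence $w(x,y)=\sum_k A_k\e^{\sqrt{\lambda_k}x}\psi_k(y)$ and, integrating termwise (all terms positive),
\[
N(w)(x)=\frac{\sum_k\sqrt{\lambda_k}\,A_k^2\,\e^{2\sqrt{\lambda_k}x}}{\sum_k A_k^2\,\e^{2\sqrt{\lambda_k}x}},
\]
which is the mean of $\sqrt{\lambda_k}$ with respect to the probability weights $p_k(x)\propto A_k^2\e^{2\sqrt{\lambda_k}x}$. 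A direct differentiation yields $\frac{d}{dx}N(w)(x)=2\bigl(\sum_k\lambda_k p_k-(\sum_k\sqrt{\lambda_k}p_k)^2\bigr)$, twice the variance of $\sqrt{\lambda_k}$ under $p(x)$, hence nonnegative (recovering the monotonicity of Lemma \ref{lemma_N_monotona}); it vanishes identically if and only if $p(x)$ is a Dirac mass, i.e.\ all indices $k$ with $A_k\neq0$ share one eigenvalue $\lambda$. In that case $w(x,y)=\e^{\sqrt{\lambda}x}\bigl(\sum_{\lambda_k=\lambda}A_k\psi_k(y)\bigr)$ and the bracket is an eigenfunction relative to $\lambda$, which is the claim.

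I expect the main obstacle to be purely technical: justifying the interchange of sum, integral and derivative in the eigenfunction expansion — termwise differentiation of the series and the Parseval identities — which requires controlling $w$ and its derivatives on each slice $\Gamma_x$; this follows from interior and boundary elliptic estimates on the bounded set $U^R$, but must be spelled out. The remaining steps — the explicit integration in the ``if'' part and the variance computation in the ``only if'' part — are elementary.
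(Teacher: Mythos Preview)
Your argument is correct, but it follows a genuinely different route from the paper's. The paper does not expand in the Dirichlet eigenbasis at all: it first observes that for solutions on the full cylinder with finite energy on every left half one has $\int_{\Gamma_x}w_x^2=\int_{\Gamma_x}w_y^2$, so that
\[
\dfrac{d}{dx}N(w)(x)=2\,\dfrac{\int_{\Gamma_x}w_x^2}{\int_{\Gamma_x}w^2}\left(1-\dfrac{\bigl(\int_{\Gamma_x}w\,w_x\bigr)^2}{\|w\|_{L^2(\Gamma_x)}^2\|w_x\|_{L^2(\Gamma_x)}^2}\right).
\]
Constancy of $N$ is then equivalent to equality in Cauchy--Schwarz on every slice, i.e.\ $w_x(x,\cdot)=\lambda(x)w(x,\cdot)$; substituting back into $N\equiv\overline N$ yields an ODE for $\lambda$ that forces $w(x,y)=\e^{\overline N x}w(0,y)$, and harmonicity plus the boundary condition pin $\overline N=\sqrt{\lambda_k}$ and $w(0,\cdot)=\psi_k$. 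Your spectral approach is more explicit and recovers the monotonicity of $N$ as a variance, which is pleasant; its cost is the bookkeeping you flag (termwise differentiation and Parseval on each slice), whereas the paper's Cauchy--Schwarz equality argument avoids the eigenfunction expansion entirely and works directly with $w$. Both lead to the same conclusion, and your final identification of $\sum_{\lambda_k=\lambda}A_k\psi_k$ as an eigenfunction matches the statement's ``$\psi_k$ one of its relative eigenfunctions''.
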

\begin{proof}
 Note for such solutions it holds $\int_{\Gamma_x}{w_x}^2=\int_{\Gamma_x}{w_y}^2$, so that
$$\dfrac{d}{dx}N(w)(x) = 2\, \dfrac{\int_{\Gamma_x}{w_x}^2}{\int_{\Gamma_x}w^2}
\left\{1 - \dfrac{\left(\int_{\Gamma_x}w\,w_x\right)^2}{\nor{w}_{L^2(\Gamma_x)}^2\nor{w_x}_{L^2(\Gamma_x)}^2} \right\}.$$
Thus, $N$ is identically constant in $x$ if and only if we have an equality in the H\"{o}lder inequality, that is
$$\left(\int_{\Gamma_x}w\,w_x\right)^2 = \int_{\Gamma_x}w^2\,\int_{\Gamma_x}{w_x}^2.$$
This happens if and only if $w_x(x,y)=\lambda(x)w(0,y)$, which leads to
$$w(x,y)=w(0,y)\left\{1 + \int_0^x \lambda(t)\,dt\right\}.$$
If we substitute this expression in $N(w)(x)\equiv N$ we obtain
$$\lambda(x) = N \left\{1 + \int_0^x \lambda(t)\,dt\right\}$$
which is a differential equation whose solution is $\lambda(x)=N\e^{Nx}$; then $w(x,y)=\e^{Nx}w(0,y)$, from which
$w(x,y)=\e^{\sqrt{\lambda_k} x}\psi_k(y)$ imposing $w$ is harmonic and zero on the boundary.
\end{proof}

\textit{Proof of Proposition \ref{step3}}.
We exploit the following chain of equalities:
$$ \lim_{x\rightarrow+\infty}N(v)(x) = \lim_{\xi\rightarrow+\infty}N(v)(x+\xi) =
\lim_{\xi\rightarrow+\infty}N(v_{\xi})(x) = N(w)(x) \equiv \sqrt{\lambda_k}. $$
Therefore Lemma \ref{lemma_N_costante} gives immediately the proof. \qed

\smallskip

\textit{Proof of Proposition \ref{step1}}. By Remark
\ref{limitatezza_N}
we need to prove $A=B$.
This is a straightforward consequence of Proposition \ref{step3}
where positivity of solutions forces $\lambda_k=\lambda_1$. \qed

\subsection{The case $U^R$ hyperplane.}

%

The existence of a positive solution in this case is immediately proved by considering the function $\bar v(x,y):= x$,
where we recall $x$ denotes the first variable in $\R^N$.

We aim to prove this is in fact the unique solution to the problem
\eqref{problema_semicilindro} when $U^R$ is a whole hyperplane of
$\R^{N}$, namely $\{x=0\}$. To do this, we follow the same outline
as before.

Let $B_r$ be the ball in $\R^N$ centered in the origin with radius $r$, we denote
$$C_r := C^R \cap B_r \qquad \text{and} \qquad \Gamma_r:= \partial B_r \cap C^R.$$

%
\begin{prop}\label{step2hyperplane}
Any positive solution to the problem \eqref{problema_semicilindro}
is $O(x)$ as $x\rightarrow\infty$ uniformly with respect to $y$.
\end{prop}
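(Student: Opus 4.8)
The plan is to mimic the strategy already used in the bounded‑section case: combine a Phragm\'en--Lindel\"of comparison, a boundary Harnack inequality, and an Almgren‑type frequency argument, but now with the Euclidean balls $B_r$ playing the role of the slabs $D_k$ and with the competitor function being the explicit positive harmonic function $\bar v(x,y)=x$. First I would establish the analogue of \eqref{PL_pos}: for $u$ any positive solution of \eqref{problema_semicilindro} on the half‑space $C^R$, one cannot have $\liminf_{r\to\infty}\sup_{\Gamma_r} u(x,y)/x = 0$, since otherwise Theorem \ref{PL-principle} applies with $D_k$ the annular pieces $C^R\cap(B_k\setminus B_{k-1})$ and $w_k = x\cdot\chi_{B_k}+\eps$, forcing $u\le 0$, a contradiction. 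This gives a constant $C>0$ with $\sup_{\Gamma_r}u/x\ge C$ for all large $r$.

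Next I would feed this into the boundary Harnack inequality (Lemma \ref{boundary_Harnack}), applied to the pair of nonnegative harmonic functions $u$ and $x$ vanishing on the flat boundary $\partial C^R=\{x=0\}$, dilating to a fixed scale: rescaling $u$ and $\bar v$ on $B_{2r}\setminus B_{r/2}$ and using that $\bar v$ is homogeneous of degree one, one obtains from $\sup_{\Gamma_r}u/x\ge C$ a two‑sided control, namely that $u/x$ is bounded above and below by positive constants on each $\Gamma_r$ with ratio $b_r/a_r\le C$ uniformly in $r$; exactly as in the derivation of \eqref{ratio_bk_ak}--\eqref{ratio_u_v}, these sequences share the same asymptotics, cannot diverge (else $x/u\to 0$ uniformly and Theorem \ref{PL-principle} gives $\bar v\le 0$), and cannot vanish (else the $\liminf$ statement above is violated). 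Hence $C_1 x\le u\le C_2 x$ on $C^R$, which in particular yields $u=O(x)$.

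The remaining point — and I expect this to be the main technical obstacle — is making the frequency machinery genuinely work on the half‑space rather than on the cylinder, because here the natural Almgren quotient is the spherical one, $N(u)(r)=r\int_{C_r}|\nabla u|^2 \big/ \int_{\Gamma_r}u^2$, and one must reprove the monotonicity‑up‑to‑a‑correction estimate (the analogue of Lemma \ref{lemma_N_monotona}) using the Rellich--Pohozaev identity on $C_r$, taking care of the boundary term on $\partial C^R$, which vanishes because $u=0$ there and the outer normal on the flat part is orthogonal to the radial field. Once $N(u)$ is shown to have a finite limit (finiteness being immediate from the bound $u=O(x)$ just proved), the blow‑down sequence $u_r(z):=u(rz)/\big(\fint_{\Gamma_r}u^2\big)^{1/2}$ converges in $\mathcal C^1_{\mathrm{loc}}$ to a nonzero harmonic function on the half‑space, homogeneous of degree $\ell=\lim N(u)(r)$, vanishing on $\{x=0\}$; the only such positive homogeneous harmonic functions on a half‑space are multiples of $x$, forcing $\ell=1$ and giving precisely the asymptotic $u\sim cx$. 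For the present Proposition only the $O(x)$ conclusion is needed, so the argument can in fact be stopped right after the two‑sided bound $C_1x\le u\le C_2 x$ is obtained, and the frequency analysis is what upgrades this to the sharp asymptotics in the subsequent step.
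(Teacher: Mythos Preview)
Your proposal is correct and follows essentially the same route as the paper: the paper's proof of Proposition~\ref{step2hyperplane} is a one-line reference back to Proposition~\ref{passo1}, replacing the slab domains by the half-balls $C_k=C^R\cap B_k$, and your write-up is precisely an explicit unpacking of that argument (Phragm\'en--Lindel\"of to exclude $\liminf\sup u/x=0$, then boundary Harnack with scaling to get the two-sided bound $C_1x\le u\le C_2x$). You also correctly note that the frequency analysis belongs to the subsequent step (Proposition~\ref{step3hyperplane}) and is not needed for the $O(x)$ conclusion itself; that portion of your proposal is accurate but superfluous for the present statement.
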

The proof of this proposition is essentially the same as in the
previous case, provided the domains $D_k$ are now defined as
$C_k$.

\begin{prop}\label{step3hyperplane}
 Any solution to \eqref{problema_semicilindro} is asymptotic to
$r^{\overline N}v(1,\theta)$ as $r\rightarrow\infty$ uniformly with respect to $\theta\in\sphere{N-1}$ in such a way that
$[\overline N(\overline N -1) + \overline N (N-1)]$ is an eigenvalue for the spherical Laplacian
and $v(1,\theta)$ is one of its relative eigenfunctions.
\end{prop}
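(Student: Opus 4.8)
The plan is to mimic exactly the three-lemma strategy that was used in the bounded-section case, now transplanted to the half-space $C^R = \{x>0\}\subset\R^N$, where the natural scaling is no longer translation $x\mapsto x+\xi$ but dilation $x\mapsto \rho x$. Accordingly I would work in polar coordinates $(r,\theta)\in(0,\infty)\times\sphere{N-1}_+$ (the upper half-sphere, since $C^R$ becomes a cone over a half-sphere with $\theta_N>0$), and replace the Almgren frequency over flat slabs with the classical Almgren frequency over spherical shells,
\[
 N(v)(r) := \frac{r\int_{C_r}\abs{\nabla v}^2}{\int_{\Gamma_r} v^2},
\]
with $C_r, \Gamma_r$ as defined just before the statement. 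The factor $r$ is the scaling-invariant normalization, and $N$ is the quantity whose value $\overline N$ will appear in the exponent $r^{\overline N}$.

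First I would establish an analogue of Lemma \ref{lemma_N_monotona}: that $N(v)(r)$, possibly after subtracting a harmless decaying correction, is monotone increasing in $r$. The computation is the standard one: with $D(r)=\int_{C_r}\abs{\nabla v}^2$ and $H(r)=\int_{\Gamma_r}v^2$, one integrates by parts (using $\Delta v=0$ and the Dirichlet condition on $\partial C^R$, which is part of the lateral boundary of the cone — the only extra boundary term compared with the whole-ball case vanishes thanks to $v=0$ there) to get the Rellich–Nečas identities $D(r)=r\int_{\Gamma_r}v_r^2 - (\text{lower order})$ and $H'(r)=\frac{N-1}{r}H(r)+2\int_{\Gamma_r}vv_r$, whence $\frac{d}{dr}\log\frac{D}{H}\ge 0$ up to an error controlled by Proposition \ref{step2hyperplane} (the bound $v=O(x)=O(r)$ forces the correction term to be summable). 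This gives existence of $\overline N:=\lim_{r\to\infty}N(v)(r)$, finite because $v=O(r)$ bounds $N$ from above.

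Next, the blow-down analogue of Lemma \ref{lemma_convergenza_v_xi}: set $v_\rho(r,\theta):=v(\rho r,\theta)\big/\big(\int_{\Gamma_\rho}v^2\big)^{1/2}$; scale invariance of $N$ gives $N(v_\rho)(r)=N(v)(\rho r)\le\overline N$, and the doubling estimate $H(2\rho)\le C H(\rho)$ coming from the monotone frequency, together with the boundary Harnack / Schauder estimates, yields $\mathcal C^1_{loc}$ compactness on the cone minus the origin. Any limit $w$ is harmonic on the open cone, vanishes on its lateral boundary, and has $N(w)(r)\equiv\overline N$. Finally I would prove the rigidity statement replacing Lemma \ref{lemma_N_costante}: $N(w)\equiv$ const forces equality in the Cauchy–Schwarz step, hence $w_r(r,\theta)=\lambda(r)\,w(1,\theta)$, and solving the resulting ODE gives $w(r,\theta)=r^{\overline N}w(1,\theta)$; plugging this homogeneous function into $\Delta w=0$ (separation of variables, $\Delta = \partial_{rr}+\frac{N-1}{r}\partial_r+\frac1{r^2}\Delta_{\sphere{N-1}}$) shows $w(1,\cdot)$ is a spherical eigenfunction on $\sphere{N-1}_+$ with Dirichlet data, with eigenvalue $\overline N(\overline N-1)+\overline N(N-1)=\overline N(\overline N+N-2)$, which is the quantity in the statement. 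The chain of equalities $\lim_{r\to\infty}N(v)(r)=\lim_{\rho\to\infty}N(v_\rho)(r)=N(w)(r)\equiv\overline N$ then identifies the frequency, and a standard decay argument (comparing $v_\rho$ with the limit and summing the geometric series produced by the doubling constant) upgrades $\mathcal C^1_{loc}$ subconvergence to the asserted genuine asymptotic $v\sim r^{\overline N}v(1,\theta)$.

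The main obstacle **I expect** to be the unboundedness of the cross-section: the half-space cone is a genuinely unbounded domain, so one cannot simply invoke compact-Sobolev embeddings or the elementary spectral theory of $-\Delta$ on $U^R$ as in the bounded case. Concretely, the monotonicity computation and the blow-down compactness both require \emph{a priori} control of $v$ near the ``edge at infinity'' of each spherical shell, i.e.\ uniform estimates up to the lateral boundary that do not degenerate as one moves out along $\partial C^R$; here one must lean on the translation invariance of $C^R$ in the directions tangent to $\{x=0\}$ together with the boundary Harnack inequality (Lemma \ref{boundary_Harnack}) and interior/boundary elliptic estimates, rather than on any global $H^1$ bound. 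A secondary subtlety is that the ``spherical eigenvalue'' is now an eigenvalue on the half-sphere $\sphere{N-1}_+$ with Dirichlet condition on its equator, so one should note its spectrum is discrete with the first eigenfunction proportional to $\theta_N=x/r$ giving eigenvalue $N-1$, i.e.\ $\overline N=1$ — consistent with the positive solution $v=x$ found at the start of the subsection — and that positivity of $v$ will, as in Proposition \ref{step1}, force this lowest value.
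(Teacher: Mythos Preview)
Your proposal is correct and follows exactly the paper's three-lemma route (Lemmas \ref{lemma_N_monotona_hyperplane}, \ref{lemma_convergenza_v_r_hyperplane}, \ref{lemma_N_costante_hyperplane}): Almgren monotonicity, blow-down compactness, and rigidity for constant frequency. One simplification you over-anticipate: in the half-space case the monotonicity of $N(v)(r)$ is \emph{exact} (no decaying correction is needed, since $C_r=C^R\cap B_r$ shrinks to a point at the origin and there is no inner boundary term analogous to $\Gamma_0$), and the domains $C_r$, $\Gamma_r$ are bounded half-balls and half-spheres respectively, so the ``edge at infinity'' obstacles you flag do not actually arise.
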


To prove this last step we need several preliminary results, which we state in the Lemma \eqref{lemma_N_monotona_hyperplane},
Lemma \eqref{lemma_N_costante_hyperplane} and Lemma \eqref{lemma_convergenza_v_r_hyperplane}.

We aim to pursue again an Almgren-type argument on the domains
$C_r$. Being $v$ any solution to \eqref{problema_semicilindro}, let
us introduce the following Almgren-type quotient
\begin{equation}\label{enne_hyperplane}
 N(v)(r):= \dfrac{r^{2-N}\int_{C_r}\abs{\nabla v}^2}{r^{1-N}\int_{\Gamma_r}v^2}=: \dfrac{D(r)}{H(r)}.
\end{equation}

\begin{lem}\label{lemma_N_monotona_hyperplane}
 Given a solution $v$ to \eqref{problema_semicilindro}, the quotient $N(v)(r)$ is monotone increasing with respect to $r$.
\end{lem}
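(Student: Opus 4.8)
The plan is to mimic exactly the computation carried out in Lemma \ref{lemma_N_monotona} for the cylindrical case, but now in spherical coordinates adapted to the half-ball geometry, so that the Almgren quotient is taken on the scaled domains $C_r$ with weights $r^{2-N}$ and $r^{1-N}$ matching the scaling of the Dirichlet energy and of the boundary $L^2$ norm respectively. Writing $D(r)=r^{2-N}\int_{C_r}\abs{\nabla v}^2$ and $H(r)=r^{1-N}\int_{\Gamma_r}v^2$, the first step is to differentiate both quantities. For $H(r)$ one uses the change of variables $x=r\theta$ to write $H(r)=\int_{\sphere{N-1}_+}v^2(r\theta)\,d\theta$ (where $\sphere{N-1}_+$ is the relevant spherical cap cut out by $C^R$), so that $H'(r)=\frac{2}{r}\int_{\Gamma_r}v\,\partial_\nu v$ with $\partial_\nu$ the radial derivative; equivalently $H'(r)=\frac{N-1}{r}H(r)+2r^{1-N}\int_{\Gamma_r}v\,\partial_\nu v$. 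For $D(r)$, integrating $\Delta v=0$ against $v$ over $C_r$ and using the Dirichlet condition on $\partial C^R$ (which kills the lateral boundary term) gives $\int_{C_r}\abs{\nabla v}^2=\int_{\Gamma_r}v\,\partial_\nu v$, hence $D(r)=r^{2-N}\int_{\Gamma_r}v\,\partial_\nu v$, a clean identity that ties $D$ to a boundary integral.

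The second step is the Rellich–Nečas / Pohozaev identity on $C_r$: testing $\Delta v=0$ against the vector field $x\cdot\nabla v$ and integrating by parts over $C_r$ produces a relation between $\int_{C_r}\abs{\nabla v}^2$, the boundary integral $r\int_{\Gamma_r}(\partial_\nu v)^2$, and $r\int_{\Gamma_r}\abs{\nabla_\theta v}^2/r^2$ (the tangential part), together with a boundary term on $\partial C^R\cap B_r$ that is nonnegative by the Dirichlet condition and the geometry of the half-space (since $x\cdot\nu\le 0$ there, where $\nu$ is the outer normal to $C^R$). This is the point where the hyperplane geometry is used in an essential way: for the flat piece $\{x_1=0\}$ one has $x\cdot\nu=0$, so that boundary term in fact vanishes, and the identity reduces to the clean differentiated form $D'(r)=\frac{2}{r^{N-2}}\int_{\Gamma_r}(\partial_\nu v)^2$ after differentiating. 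Combining this with the expression for $H'(r)$ one obtains
\begin{equation*}
\frac{D'(r)}{D(r)}-\frac{H'(r)}{H(r)}
=\frac{2\int_{\Gamma_r}(\partial_\nu v)^2}{\int_{\Gamma_r}v\,\partial_\nu v}
-\frac{2\int_{\Gamma_r}v\,\partial_\nu v}{\int_{\Gamma_r}v^2},
\end{equation*}
and the Cauchy–Schwarz inequality $\left(\int_{\Gamma_r}v\,\partial_\nu v\right)^2\le \int_{\Gamma_r}v^2\int_{\Gamma_r}(\partial_\nu v)^2$ forces the right-hand side to be nonnegative, i.e. $N'(r)=N(r)\left(\frac{D'}{D}-\frac{H'}{H}\right)\ge 0$. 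One should also remark that $\int_{\Gamma_r}v\,\partial_\nu v=D(r)r^{N-2}>0$ for $r$ large, so the denominators carry the right sign and the manipulation is legitimate; for small $r$, if $H(r)$ should vanish one restricts to the interval where $v\not\equiv 0$ on $\Gamma_r$, which is all large $r$ by unique continuation.

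The main obstacle, compared with the cylindrical case, is bookkeeping the curved-boundary contributions in the Pohozaev identity: one must check that the part of $\partial C_r$ lying on $\partial C^R$ contributes a term of a definite (favorable) sign, rather than an error term that must be absorbed — unlike Lemma \ref{lemma_N_monotona}, where a genuine error $-C\e^{-\sqrt{\lambda_1}x}$ appeared and only near-monotonicity held. Here the half-space boundary is flat and passes through the origin, so $x\cdot\nu\equiv 0$ on it and the contribution is exactly zero; verifying this and confirming that no lower-order remainder survives is the crux, and it is what makes the statement the clean "$N(v)(r)$ is monotone increasing" rather than a perturbed version. Once this is settled, the inequality is just Cauchy–Schwarz exactly as before.
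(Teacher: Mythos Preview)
Your approach is essentially identical to the paper's: compute $H'(r)=2r^{1-N}\int_{\Gamma_r}v\,v_r$ and, via the Pohozaev identity obtained by testing $\Delta v=0$ with $(x,y)\cdot\nabla v$, derive the clean formula $D'(r)=2r^{2-N}\int_{\Gamma_r}v_r^2$ (the flat piece $\{x=0\}$ contributing nothing because $(x,y)\cdot\nu=0$ there, exactly as you note), after which $N'(r)\ge 0$ is just Cauchy--Schwarz. The only slip is the normalization in your first formula for $H'$ --- it should carry the factor $r^{1-N}$ rather than $r^{-1}$, and your ``equivalently'' version is the derivative of the unweighted $\int_{\Gamma_r}v^2$ rather than of $H$ --- but this does not affect the argument.
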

\begin{proof}
It is quite simple to see
\begin{equation}\label{H'(r)}
H'(r) = 2 r^{1-N} \int_{\Gamma_r}v\,v_r.
\end{equation}
Testing the equation by $v$ we obtain
\begin{equation}
H'(r) = 2 r^{1-N} \int_{C_r}\abs{\nabla v}^2 = \dfrac{2}{r} D(r);
\end{equation}
from which $D(r)=(r/2) H'(r)$.

On the other hand we claim
\begin{equation}\label{D'(r)}
D'(r)=2 r^{2-N} \int_{\Gamma_r}{v_r}^2.
\end{equation}
Indeed,
\begin{equation}\label{D'(r)1}
D'(r)=(2-N)r^{1-N}\int_{C_r}\abs{\nabla v}^2 + r^{2-N} \int_{\Gamma_r}\abs{\nabla v}^2;
\end{equation}
testing the equation with $\nabla v \cdot (x,y)$ and integrating by parts we obtain
\begin{equation}\label{D'(r)2}
\int_{C_r}\nabla v \cdot \nabla (\nabla v\cdot (x,y)) = r \int_{\Gamma_r}{v_r}^2,
\end{equation}
which is in fact
\begin{equation}\label{D'(r)3}
\int_{C_r}\nabla v \cdot \nabla (\nabla v\cdot (x,y)) = -\dfrac{N-2}{2}\int_{C_r}\abs{\nabla v}^2
+ \dfrac{r}{2} \int_{\Gamma_r}\abs{\nabla v}^2
\end{equation}
via integration by parts.
From \eqref{D'(r)1}, \eqref{D'(r)2} and \eqref{D'(r)3} we immediately obtain \eqref{D'(r)}.

Now, the derivative of $N$ is of course
$N'(r)=\dfrac{D'(r)H(r)-D(r)H'(r)}{H^2(r)}$, and we recall that $D(r)H'(r)=(r/2)(H'(r))^2$, so that
\begin{eqnarray*}
N'(r)
= \dfrac{2 r^{3-2N}}{H^2(r)}\left\{\int_{\Gamma_r}{v_r}^2\int_{\Gamma_r}{v}^2
- \left(\int_{\Gamma_r}{v_r}v\right)^2\right\} \geq 0
\end{eqnarray*}
thanks to the H\"{o}lder inequality.
\end{proof}

Now we introduce the sequence of normalized functions
$$v_r(x,y) := \dfrac{v(rx,ry)}{\left(\int_{\Gamma_{1/2}} v^2(rx,ry)\right)^{1/2}} \qquad \textrm{for $r>1$}.$$

\begin{lem}\label{lemma_convergenza_v_r_hyperplane}
 As $r\rightarrow\infty$ the sequence $\{v_r\}_{r}$ converges $\mathcal C^1$-uniformly on $C_1$
to a function which is harmonic on the whole halfspace and whose $N(x)$ is identically constant.
\end{lem}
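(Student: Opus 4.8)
The plan is to follow verbatim the scheme of Lemma~\ref{lemma_convergenza_v_xi}, with the translations $x\mapsto x+\xi$ of the cylinder case replaced by the dilations $(x,y)\mapsto(rx,ry)$; the whole mechanism hinges on the fact that the weighted Almgren quotient \eqref{enne_hyperplane} is invariant under such dilations. First I would record this scaling identity: if $w(x,y):=v(rx,ry)$ then $w$ is harmonic, vanishes on $\partial C^R$, and a direct change of variables gives $D(w)(\rho)=D(v)(r\rho)$ and $H(w)(\rho)=H(v)(r\rho)$ for every $\rho>0$ (this is exactly what the weights $r^{2-N}$, $r^{1-N}$ are designed for), whence $N(v_r)(\rho)=N(v)(r\rho)$. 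By Lemma~\ref{lemma_N_monotona_hyperplane} the map $\rho\mapsto N(v)(\rho)$ is nondecreasing, and by Proposition~\ref{step2hyperplane} together with the identity $D(r)=(r/2)H'(r)$ it is bounded from above; therefore it admits a finite limit $\overline N$ and $N(v)(\rho)\le\overline N$ for all $\rho>0$. Consequently $N(v_r)(\rho)\le\overline N$ for every $r>1$ and every $\rho>0$.

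Next I would produce uniform $H^1$ bounds on a fixed neighbourhood of $C_1$, say on $C_2$. The normalization forces $H(v_r)(1/2)=2^{N-1}$, a constant independent of $r$. Integrating the elementary identity $(\log H(v_r))'(\rho)=2N(v_r)(\rho)/\rho$ from $1/2$ to $\rho$ and using $N(v_r)\le\overline N$ yields $H(v_r)(\rho)\le 2^{N-1}(2\rho)^{2\overline N}$ for $\rho\in[1/2,2]$, uniformly in $r$; since $\int_{C_\rho}\abs{\nabla v_r}^2=\rho^{N-2}N(v_r)(\rho)H(v_r)(\rho)$, also the Dirichlet integrals over $C_2$ are uniformly bounded. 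A Poincar\'e inequality on $C_2$ — licit because $v_r$ vanishes on the flat portion $\{x=0\}\cap B_2$ of $\partial C^R$ — then bounds $\int_{C_2}v_r^2$, so $\{v_r\}$ is bounded in $H^1(C_2)$.

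Then I would invoke compactness: by interior elliptic estimates and by boundary elliptic estimates at the flat piece $\{x=0\}\cap B_2$ (where $v_r=0$), the family $\{v_r\}$ is bounded in $\mathcal C^{1,\alpha}(\overline{C_1})$; Ascoli--Arzel\`a gives a subsequence converging in $\mathcal C^1(\overline{C_1})$ to a function $w$, harmonic on the half-space and vanishing on $\{x=0\}$. The $\mathcal C^1$ convergence on $\overline{C_1}$ lets us pass to the limit in $D(v_r)(\rho)$ and $H(v_r)(\rho)$ for every $\rho\le 1$; since $H(v_r)(1/2)=2^{N-1}$ is a fixed positive constant, $H(w)(1/2)>0$, hence $H(w)(\rho)>0$ for all $\rho$ by the doubling bound, so $N(w)(\rho)$ is well defined and $N(w)(\rho)=\lim_{r\to\infty}N(v_r)(\rho)=\lim_{r\to\infty}N(v)(r\rho)=\overline N$. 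Thus $N(w)$ is identically constant. Finally, exactly as in the last lines of the proof of Lemma~\ref{lemma_convergenza_v_xi}, since every subsequence of $\{v_r\}$ has a further subsequence converging in $\mathcal C^1_{\mathrm{loc}}$ to a harmonic function with the same identically constant Almgren quotient $\overline N$, one upgrades subsequential convergence to $\mathcal C^1_{\mathrm{loc}}$ convergence of the whole family.

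The main obstacle is the uniform two-sided control of the normalized functions: one must rule out both blow-up and collapse of $v_r$ on $C_1$. The upper $H^1$ bound comes cleanly from the monotonicity and boundedness of the Almgren quotient (via the logarithmic derivative of $H$), but to keep the rescaling nondegenerate one has to combine it with interior/boundary Harnack so that $\nor{v_r}_{L^2(\Gamma_\rho)}$ stays comparable, uniformly in $r$, to the chosen normalizing factor, and one has to check that the limiting $H(w)(\rho)$ remains strictly positive so that the Almgren quotient genuinely passes to the limit. A minor further point is the finiteness of $\overline N$, which — for solutions that are not a priori positive — rests on the growth information inherited from Proposition~\ref{step2hyperplane}.
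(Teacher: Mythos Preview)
Your proposal is correct and follows essentially the same scheme as the paper, which simply says ``the proof is essentially the same as in Lemma~\ref{lemma_convergenza_v_xi}''; you have faithfully transported that argument to the dilation setting, recording the scaling identity $N(v_r)(\rho)=N(v)(r\rho)$ in place of the translation identity $N(v_\xi)(x)=N(v)(x+\xi)$. The one visible difference is that where the paper (in the proof of Lemma~\ref{lemma_convergenza_v_xi}) invokes the Harnack inequality to bound the ratio $\int_{\Gamma_x}v_\xi^2$, you instead integrate the identity $(\log H)'=2N/\rho$ and then apply a Poincar\'e inequality using the vanishing on $\{x=0\}$; this is a harmless variation that has the mild advantage of not relying on positivity, and it leads to the same uniform $H^1$ bound and the same compactness/limit argument.
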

\begin{proof}
Here the proof is essentially the same as in Lemma \eqref{lemma_convergenza_v_xi}.
\end{proof}

\begin{lem}\label{lemma_N_costante_hyperplane}
Let $v$ any non-trivial solution to the problem \eqref{problema_semicilindro}.
Then its Almgren's frequency function is identically constant equal to $\overline N$ if and only if
$$v(r,\theta) = r^{\overline N} v(1,\theta)$$
in such a way that
$[\overline N(\overline N -1) + \overline N (N-1)]$ is an eigenvalue for the spherical Laplacian
and $v(1,\theta)$ is one of its relative eigenfunctions.
\end{lem}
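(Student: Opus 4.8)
The plan is to mirror the argument of Lemma \ref{lemma_N_costante}, exploiting the differential identities already established in the proof of Lemma \ref{lemma_N_monotona_hyperplane} together with the separation of variables for the Laplacian in polar coordinates. As a preliminary remark one should note that for a nontrivial $v$ one has $H(r)>0$ for every $r>0$, so that $N(v)(r)$ is well defined: if $H(r_0)=0$ then $v$ vanishes on $\partial B_{r_0}\cap C^R$ and, together with the Dirichlet condition, on all of $\partial C_{r_0}$, whence $v\equiv 0$ in $C_{r_0}$ by the maximum principle and $v\equiv 0$ in $C^R$ by unique continuation, a contradiction.

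For the implication ``$N(v)\equiv\overline N\ \Rightarrow\ v(r,\theta)=r^{\overline N}v(1,\theta)$'' I would recall from the proof of Lemma \ref{lemma_N_monotona_hyperplane} that $D(r)=(r/2)H'(r)$, $D'(r)=2r^{2-N}\int_{\Gamma_r}{v_r}^2$, and
\[ N'(r)=\frac{2r^{3-2N}}{H^2(r)}\left\{\int_{\Gamma_r}{v_r}^2\int_{\Gamma_r}v^2-\left(\int_{\Gamma_r}v\,v_r\right)^2\right\}. \]
Hence $N(v)\equiv\overline N$ forces equality, for every $r$, in the Cauchy--Schwarz inequality on $\Gamma_r$, i.e. $v_r(r,\theta)=\mu(r)\,v(r,\theta)$ for some scalar $\mu(r)$; on the other hand $N(v)\equiv\overline N$ and $D=(r/2)H'$ give $\int_{\Gamma_r}v\,v_r=\frac{\overline N}{r}\int_{\Gamma_r}v^2$, so $\mu(r)=\overline N/r$. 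Thus $v_r(r,\theta)=\frac{\overline N}{r}v(r,\theta)$ pointwise, and integrating this first order linear ODE in $r$ yields $v(r,\theta)=r^{\overline N}v(1,\theta)$. Imposing now $\Delta v=0$ and using $\Delta=\partial_{rr}+\frac{N-1}{r}\partial_r+\frac{1}{r^2}\Delta_{\sphere{N-1}}$ one finds $-\Delta_{\sphere{N-1}}v(1,\cdot)=[\overline N(\overline N-1)+\overline N(N-1)]\,v(1,\cdot)$, while the Dirichlet condition on $\partial C^R$ makes $v(1,\cdot)$ vanish on the equator of the half-sphere $\Gamma_1$; this is exactly the claimed eigenvalue/eigenfunction relation.

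For the converse implication it suffices to verify directly that $v(r,\theta):=r^{\overline N}g(\theta)$, with $g$ a Dirichlet eigenfunction of $-\Delta_{\sphere{N-1}}$ on $\Gamma_1$ of eigenvalue $\overline N(\overline N-1)+\overline N(N-1)$, is harmonic in $C^R$ and vanishes on $\partial C^R$ (immediate from the same polar expression of $\Delta$), and that $\int_{\Gamma_r}v^2$ and $\int_{C_r}\abs{\nabla v}^2$ are explicit constants times $r^{2\overline N}$; plugging these into \eqref{enne_hyperplane} gives the constant $\overline N$, the spherical integration by parts $\int_{\Gamma_1}\abs{\nabla_{\sphere{N-1}}g}^2=[\overline N(\overline N-1)+\overline N(N-1)]\int_{\Gamma_1}g^2$ being precisely what makes the powers of $r$ cancel with the right constant.

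I expect the only genuinely delicate point to be the step from the (pointwise in $r$) Cauchy--Schwarz equality $v_r(r,\cdot)=\mu(r)v(r,\cdot)$ on $\Gamma_r$ to the identification $\mu(r)=\overline N/r$ and the ensuing factorization: here one uses that $v$ and $v_r$ are smooth and that $v(r,\cdot)\not\equiv0$ on $\Gamma_r$ by the preliminary remark, so that the relation can legitimately be read as a genuine ODE in $r$ for each fixed $\theta$, with unique solution $r^{\overline N}v(1,\theta)$. Everything else is a routine transcription of the cylindrical computation of Lemma \ref{lemma_N_costante}.
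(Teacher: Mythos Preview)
Your proposal is correct and follows essentially the same route as the paper: equality in Cauchy--Schwarz from $N'\equiv0$ gives the separation $v_r=\mu(r)v$, the constancy $N\equiv\overline N$ together with $D=(r/2)H'$ pins down the radial factor, and harmonicity in polar coordinates yields the spherical eigenvalue condition. The only cosmetic difference is that the paper writes the radial relation as an integral equation $r\lambda(r)=\overline N\{1+\int_1^r\lambda\}$ and solves it, whereas you identify $\mu(r)=\overline N/r$ directly and integrate the linear ODE; your version is in fact slightly cleaner, and your added remarks on $H(r)>0$ and on the converse implication fill in points the paper leaves implicit.
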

\begin{proof}
If the derivative of the frequency function is identically zero, then an equality must hold
in the H\"{o}lder inequality, so that $v_r(r,\theta)=\lambda(r)v(1,\theta)$, that is
$$v(r,\theta)= v(1,\theta)\{1+\int_1^r \lambda(t)dt\}.$$
Imposing $D(r)/H(r)=(r/2)(H'(r)/H(r)) = \overline N$ we obtain
\begin{eqnarray*}
\overline N &=& \dfrac{\displaystyle r\int_{\Gamma_r}v\,v_r}{\displaystyle \int_{\Gamma_r}v^2}
= r \dfrac{\displaystyle \int_{\Gamma_r}v^2(1,\theta) \lambda(r) \left(1+\int_1^r \lambda(t)dt\right)d\theta}
{\displaystyle \int_{\Gamma_r}v^2(1,\theta)\left(1+\int_1^r \lambda(t)dt\right)^2\,d\theta}
= \dfrac{r\,\lambda(r)}{\displaystyle 1+\int_1^r \lambda(t)dt}.
\end{eqnarray*}
The solution of the ordinary differential equation
$$r\,\lambda(r) = \overline N \left\{1+\int_1^r \lambda(t)dt\right\}$$
is indeed $\int_1^r \lambda(t)dt = r^{\overline N}-1$, which leads to
$v(r,\theta)=r^{\overline N} v(1,\theta)$.
Imposing $v$ is harmonic on the whole halfspace, we deduce the conditions on $\overline N$ and $v(1,\theta)$.
\end{proof}

%

\begin{cor}
 The solution $\bar v$ defined in \eqref{x} is the unique positive solution to the problem \eqref{problema_semicilindro}
up to multiplication by constants.
\end{cor}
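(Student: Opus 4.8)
\emph{Proof proposal.} The plan is to mirror the proof of Proposition \ref{step1}. Let $v$ be any positive solution of \eqref{problema_semicilindro} on the half-space $C^R=\{x>0\}$. First I would record the linear bound: by Proposition \ref{step2hyperplane}, $v(x,y)\le C\,x$ on $C^R$ (and the symmetric statement $x\le C'v$, by the same comparison mechanism), so in particular the frequency $N(v)(r)$ --- monotone nondecreasing by Lemma \ref{lemma_N_monotona_hyperplane} --- stays bounded, and $\overline N:=\lim_{r\to\infty}N(v)(r)$ exists and is finite.

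Next I would pin down the asymptotic profile via Proposition \ref{step3hyperplane} together with positivity: $v$ is asymptotic to $r^{\overline N}v(1,\theta)$, where $[\overline N(\overline N-1)+\overline N(N-1)]$ is an eigenvalue of the spherical Laplacian on the half-sphere $\sphere{N-1}\cap\{x>0\}$ with \emph{Dirichlet} conditions on the equator $\{x=0\}\subset\partial C^R$, and $v(1,\cdot)$ is a corresponding eigenfunction. Since $v>0$, this limit profile is nonnegative, hence it is the simple ground state of that Dirichlet problem, i.e.\ a positive multiple of the degree-one spherical harmonic $\theta\mapsto\theta_1=x/r$, whose eigenvalue is $N-1$. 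Thus $[\overline N(\overline N-1)+\overline N(N-1)]=N-1$, whose only positive root is $\overline N=1$; consequently $v$ is asymptotic to $c_0\,x$ for some $c_0>0$.

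Finally I would upgrade ``$v\sim c_0 x$'' to the exact identity $v\equiv c_0 x$ --- the step that at first sight looks like the main obstacle, but which is dissolved cleanly by reflection. Since $v$ is harmonic in $\{x>0\}$, smooth up to the flat boundary $\{x=0\}$ and vanishes there, the odd Schwarz reflection $\tilde v(x,y):=-v(-x,y)$ for $x<0$ extends $v$ to a function harmonic on all of $\R^N$; by the linear bound of the first step, $|\tilde v(z)|\le C|z|$, so $\tilde v$ has at most linear growth and is therefore affine, and being odd in $x$ it must equal $\alpha\,x$ for some constant $\alpha>0$ (positive because $v>0$). Hence $v=\alpha\,\bar v$, as claimed. (Alternatively one may close the argument by Phragm\'{e}n--Lindel\"{o}f as in Proposition \ref{passo1}: $w:=c_0\bar v-v$ is harmonic, vanishes on $\partial C^R$ and is sublinear, so Theorem \ref{PL-principle} applied on the half-balls $C_k$ with barriers $\bar v+\eps_k$ --- and the symmetric choice for $-w$ --- gives $w\equiv0$; here a little care is needed near the equator, where the barrier degenerates, using that $w$ vanishes there and that interior gradient estimates give $|w(x,y)|\le C\,x$ with $C$ independent of $k$.) Beyond the material already developed in this section, the only new ingredient is the elementary identification of the first Dirichlet eigenvalue of the half-sphere as $N-1$, realized by $\theta_1$.
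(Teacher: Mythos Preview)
Your proof is correct, and in fact your closing step is more self-contained than the paper's. The paper's two-line proof reads: ``Positivity assumption forces $\overline N=1$ in Proposition \ref{step3hyperplane}. This homogeneity degree together with $v(0,y)=0$ implies $v(x,y)=x$.'' Unpacked, this means: since $N(v)(r)$ is \emph{exactly} nondecreasing (Lemma \ref{lemma_N_monotona_hyperplane}) with $\lim_{r\to\infty}N(v)(r)=\overline N=1$, and since the vanishing of $v$ at the origin forces $\lim_{r\to0^+}N(v)(r)\ge 1$ (standard Almgren theory, via odd reflection), one gets $N(v)(r)\equiv 1$ and hence, by Lemma \ref{lemma_N_costante_hyperplane}, $v$ is exactly homogeneous of degree one, so $v=cx$.

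Your route is different: after the linear bound $v\le Cx$ from Proposition \ref{step2hyperplane}, you bypass the $r\to0^+$ frequency analysis entirely and close with odd Schwarz reflection plus the Liouville theorem for entire harmonic functions of linear growth. This is cleaner and uses only classical tools; note that it makes your intermediate steps (the computation of $\overline N$ via Proposition \ref{step3hyperplane}) logically superfluous---the reflection argument needs only $|v|\le Cx$, nothing about the asymptotic profile. What the paper's approach buys, by contrast, is uniformity with the bounded-section case and a template that extends to the sign-changing setting of Remark \ref{rem:sign_changing}, where no Liouville-type shortcut is available. Your Phragm\'en--Lindel\"of alternative is also viable, with exactly the boundary caveat you flag, but the reflection argument is the sharper of the two.
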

\begin{proof}
Positivity assumption forces $\overline N = 1$ in Proposition \eqref{step3hyperplane}.
This homogeneity degree together with $v(0,y)=0$ implies $v(x,y)=x$.
\end{proof}

\section{Solutions on $\Omega$}

\subsection{
Positive solutions on $\Omega$ with finite energy on $C^L$.}

The following proposition can be easily proved.

\begin{prop}
Let us consider the case $\Omega:=C^L \cup C^R$ where $U^R$ is the
hyperplane $\{x=0\}$.  Let $\Phi$ be unique normalized positive solution of \eqref{problema_semicilindro}, extended as vanishing outside the semicylinder.  There exists a unique positive solution $v$ to
problem \eqref{problema_omega} such that $u=v-\Phi$ has finite energy on $\Omega$: it is the solution of the minimum
problem
\begin{equation}\label{min_prob}
\min_{u\in \mathcal D^{1,2}(\Omega)} \dfrac{1}{2}\int_{\Omega}
\abs{\nabla u}^2 - \int_{\Gamma}\dfrac{\partial \Phi}{\partial
x}_{|x=0} u.
\end{equation}
\end{prop}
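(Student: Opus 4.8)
The plan is to set up the problem \eqref{min_prob} as a coercive minimization in the homogeneous Sobolev space $\D^{1,2}(\Omega)$ (the closure of $\Cinf{\Omega}$ under the Dirichlet norm), show the functional attains its minimum at a unique $u$, and then verify that $v := u + \Phi$ is the positive solution we seek. First I would observe that since $U^R = \{x=0\}$ and $\Phi$ is the positive solution on the halfspace, extended by zero across $\Gamma$, the linear functional $u \mapsto \int_\Gamma \frac{\partial \Phi}{\partial x}\big|_{x=0}\,u$ is well defined and bounded on $\D^{1,2}(\Omega)$: indeed $\frac{\partial \Phi}{\partial x}\big|_{x=0}$ lies in $L^2(\Gamma)$ (or in the appropriate weighted space when $\Gamma$ is unbounded, using the explicit form of $\Phi$), and one controls $\|u\|_{L^2(\Gamma)}$ by the Dirichlet norm via a trace inequality on the halfspace-type geometry together with a Hardy-type estimate, exploiting that $\Gamma$ is a hyperplane section and $\Omega$ has bounded section on the $C^L$ side. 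Coercivity of $\frac12\int_\Omega |\nabla u|^2 - \int_\Gamma (\partial_x\Phi) u$ then follows from Young's inequality, and strict convexity gives a unique minimizer $u \in \D^{1,2}(\Omega)$.

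Next I would derive the Euler--Lagrange equation: for every $\varphi \in \Cinf{\Omega}$,
\[
\int_\Omega \nabla u \cdot \nabla \varphi = \int_\Gamma \left.\frac{\partial \Phi}{\partial x}\right|_{x=0} \varphi.
\]
Reading this distributionally, $u$ is harmonic in $C^L$ and in $C^R$ separately, vanishes on $\partial\Omega$, and across the interface $\Gamma$ the jump in the normal derivative of $u$ equals $-\,\partial_x\Phi|_{x=0}$. Since $\Phi \equiv 0$ on the $C^L$ side and is the halfspace solution on the $C^R$ side, the function $v = u + \Phi$ satisfies $\Delta v = 0$ in $C^R \cup C^L$, vanishes on $\partial\Omega$, and — by construction of the jump condition — has \emph{continuous} normal derivative across $\Gamma$, hence is a genuine $\mathcal C^1$ (indeed harmonic) solution of \eqref{problema_omega} on all of $\Omega$. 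Elliptic regularity up to $\Gamma$ gives the $\mathcal C^1$ claim.

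It remains to prove $v > 0$ and uniqueness. For positivity I would argue by comparison: $u^- := \max\{-u,0\} \in \D^{1,2}(\Omega)$ is an admissible competitor, and testing the Euler--Lagrange identity against $-u^-$ together with the sign $\partial_x\Phi|_{x=0} \ge 0$ (since $\Phi$ is positive and vanishes on $\Gamma = \{x=0\}$, its inward normal derivative is nonnegative) forces $\int_\Omega |\nabla u^-|^2 \le 0$, so $u \ge 0$, whence $v = u + \Phi > 0$ by the strong maximum principle (using that $v \not\equiv 0$). For uniqueness, suppose $\tilde v$ is another positive solution with $\tilde v - \Phi \in \D^{1,2}(\Omega)$; then $w := v - \tilde v \in \D^{1,2}(\Omega)$ is harmonic in $C^R \cup C^L$, vanishes on $\partial\Omega$, and has no jump across $\Gamma$ (both normal derivatives cancel the same $\partial_x\Phi$), so $w$ is a finite-energy harmonic function on $\Omega$ vanishing on $\partial\Omega$; testing against $w$ itself gives $\int_\Omega |\nabla w|^2 = 0$, hence $w \equiv 0$.

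\textbf{Main obstacle.} The delicate point is the functional-analytic setup when $\Gamma$ (equivalently $U^R = \{x=0\}$) is unbounded: one must verify that $\partial_x\Phi|_{x=0}$ genuinely defines a bounded linear functional on $\D^{1,2}(\Omega)$ and that $\D^{1,2}(\Omega)$ is a Hilbert space in which the Dirichlet form is coercive — this is where the asymmetry of the geometry (bounded section on $C^L$, full halfspace on $C^R$) and the precise decay of $\Phi \equiv x$ enter, and it is the step I expect to require the most care; everything downstream is a routine application of convexity, the maximum principle, and elliptic regularity.
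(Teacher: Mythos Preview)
Your approach is correct and is essentially the one the paper has in mind: the paper states only that ``the following proposition can be easily proved'' and then, in the discussion immediately after, derives exactly the Euler--Lagrange identity and the jump condition \eqref{saltoderivata} that you write down, concluding that $v=u+\Phi$ is the $\mathcal C^1$ harmonic solution. Your additional details on positivity (testing against $-u^-$) and uniqueness (finite-energy harmonic difference) are the natural completions the paper leaves implicit. One small clarification on your ``main obstacle'': in this geometry $\Gamma=\partial C^R\cap\partial C^L=\{0\}\times\overline{U^L}$, so when $U^L$ is bounded the interface $\Gamma$ is itself bounded and the trace/continuity issue you flag is milder than you suggest (the subtlety is only that $\D^{1,2}(\Omega)$ contains a halfspace component, not that $\Gamma$ is large).
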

%

We note that the minimizer  $u$  is not a $\mathcal C^1$
solution. Indeed, on one hand for every $\varphi\in\mathcal
D^{1,2}(\Omega)$ we have
\begin{equation}\label{eq_debole_probl_min}
\int_{\Omega}\nabla u\nabla \varphi = \int_{\Gamma}\dfrac{\partial
\Phi}{\partial x}_{|x=0}\varphi;
\end{equation}
whereas on the other hand, multiplying the equation by $\varphi$ nd
integrating by parts over $C^L$ and $C^R$ we obtain
$$\int_{\Omega}\nabla u\nabla \varphi = \int_{C^L\cup C^R}\nabla u\nabla \varphi
= \int_{\Gamma} \varphi \left(-\dfrac{\partial u^R}{\partial
x}_{|x=0} + \dfrac{\partial u^L}{\partial x}_{|x=0} \right)$$ where
$u^L:= u \chi_{C^L}$ and $u^R$ is defined similarly. Thus,
\begin{equation}\label{saltoderivata}
\dfrac{\partial u^L}{\partial x}_{|x=0} = \dfrac{\partial
u^R}{\partial x}_{|x=0} +\dfrac{\partial \Phi}{\partial x}_{|x=0},
\end{equation}
in the sense that must be specified yet (see Section 4). In order to abtain a
 $\mathcal C^1$ solution, we need to consider the
sum $v=u+\Phi$ instead of $u$.

Furthermore, if the test function $\varphi$ has compact support far
away from $\Gamma$, Equation \eqref{eq_debole_probl_min} shows that
the minimum is a harmonic function in
$\Omega\setminus\Gamma$. In this way, if we are looking for a
harmonic function $u+\Phi$ on the whole $\Omega$, $\Phi$ must be the
unique (up to multiplication by constants) solution to the problem
\eqref{problema_semicilindro} (see the previous section). In other words,
given the function $\Phi$ solution to the problem \eqref{problema_semicilindro},
the function $u+\Phi$ is the unique solution to the problem \eqref{problema_omega}
with finite energy on the left. Furthermore, it is possible to prove that any
positive solution to the problem \eqref{problema_omega} with finite energy on the left
takes the form $u+\Phi$ for a certain $\Phi$ solution to the problem \eqref{problema_semicilindro},
in order to state the following

\begin{thm}\label{teorema_omega_energia_finita_da_una_parte}
There exists a unique (up to multiplicative constants) solution to the problem
\eqref{problema_omega} having finite energy on $C^L$ and satisfying
\[    \lim_{x\rightarrow +\infty} N(x) = \sqrt{\lambda_1^R}.
\]
It is asymptotic to a multiple of \eqref{v} if $U^R$ is bounded,
whereas it is asymptotic to a multiple of \eqref{x} if $U^R$ is a
whole hyperplane.
\end{thm}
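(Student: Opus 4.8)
The plan is to establish existence and uniqueness separately, relying on the machinery already developed. For \emph{existence}, I would argue as in the preceding discussion: fix $\Phi$ to be (a suitable multiple of) the unique positive solution to \eqref{problema_semicilindro} on $C^R$ given by Theorem \ref{teorema_semicilindro} — so $\Phi = e^{\sqrt{\lambda_1^R}x}\psi_1 - e^{-\sqrt{\lambda_1^R}x}\psi_1$ if $U^R$ is bounded, or $\Phi = x$ if $U^R$ is a hyperplane — extended by zero to all of $\Omega$. Then solve the minimization problem \eqref{min_prob} in $\mathcal D^{1,2}(\Omega)$; the functional is coercive and strictly convex (one must check the linear term $\int_\Gamma \frac{\partial\Phi}{\partial x}_{|x=0} u$ is continuous on $\mathcal D^{1,2}(\Omega)$, which follows from the trace inequality on $\Gamma$ together with $\frac{\partial\Phi}{\partial x}_{|x=0}\in L^2(\Gamma)$ — indeed it is $\sqrt{\lambda_1^R}$ times a constant times $\psi_1$, or the constant $1$ on a hyperplane, where one restricts to compactly supported $\Phi$ to make the latter legitimate). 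The unique minimizer $u$ satisfies \eqref{eq_debole_probl_min}; setting $v := u+\Phi$ and using the jump relation \eqref{saltoderivata} one checks $v$ is a genuine $\mathcal C^1$ harmonic function on $\Omega$ vanishing on $\partial\Omega$, with finite energy on $C^L$ by construction, and with $N(x)\to\sqrt{\lambda_1^R}$ at $+\infty$: on $C^R$, $v$ equals the finite-energy piece $u$ (which decays) plus $\Phi$, hence $v$ is asymptotic to $\Phi$, whose Almgren frequency is exactly $\sqrt{\lambda_1^R}$ (it is the constant-frequency solution of Lemma \ref{lemma_N_costante}).

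For \emph{uniqueness}, suppose $v_1, v_2$ are two solutions to \eqref{problema_omega}, each with finite energy on $C^L$ and each with $\lim_{x\to+\infty} N(x) = \sqrt{\lambda_1^R}$. The key is to show that each $v_i$ is, on $C^R$, asymptotic to a multiple of $\Phi$. This is where the results of Section 2 enter: restricting $v_i$ to $C^R$ gives a harmonic function (not necessarily positive a priori, but the frequency threshold will force good behavior) whose Almgren frequency converges to $\sqrt{\lambda_1^R}$; by Proposition \ref{step3} (in the bounded case) or Proposition \ref{step3hyperplane} (in the hyperplane case) combined with the assumption that the limiting frequency is precisely $\sqrt{\lambda_1^R}$, $v_i$ must be asymptotic to $c_i\,\Phi$ for some constant $c_i$. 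If $c_i \ne 0$ we normalize $c_1 = c_2$; then $w := v_1 - v_2$ is a harmonic function on $\Omega$, vanishing on $\partial\Omega$, with \emph{finite energy on $C^L$} and now also \emph{finite energy on $C^R$} (the leading terms cancel), hence finite total energy, hence $w \in \mathcal D^{1,2}(\Omega)$; testing the equation against $w$ itself gives $\int_\Omega |\nabla w|^2 = 0$, so $w \equiv 0$. The remaining case $c_1 = 0$ would mean $v_1$ has finite energy on both $C^R$ and $C^L$, hence finite total energy, hence $v_1 \equiv 0$, contradicting nontriviality; so in fact $c_i \ne 0$ always.

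The main obstacle is the uniqueness step, specifically extracting from the hypothesis $\lim N(x)=\sqrt{\lambda_1^R}$ — rather than merely $\le\sqrt{\lambda_1^R}$ — the conclusion that the solution genuinely behaves like a single multiple of $\Phi$ and not like a combination involving lower modes or a contribution from the "growing" solution $e^{-\sqrt{\lambda_1^R}x}$-type piece. One must verify carefully that Propositions \ref{step3} and \ref{step3hyperplane}, proved there for positive solutions, apply to the possibly sign-changing differences and restrictions we encounter here; the Almgren monotonicity (Lemma \ref{lemma_N_monotona}, Lemma \ref{lemma_N_monotona_hyperplane}) and the blow-down analysis (Lemma \ref{lemma_convergenza_v_xi}, Lemma \ref{lemma_convergenza_v_r_hyperplane}) do not use the sign, only the energy growth controlled by the frequency bound, so this transfer should go through — but it is the point requiring the most care. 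A secondary subtlety in the hyperplane case is that $\Phi = x$ has genuinely infinite energy with a non-$L^2$ normal trace on $\Gamma$, so the minimization \eqref{min_prob} must be set up with a truncation or interpreted in an appropriate weighted space; this is routine but must be stated cleanly.
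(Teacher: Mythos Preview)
Your existence argument via minimization is the paper's own (it appears in the discussion immediately preceding the theorem). For uniqueness, however, you and the paper take genuinely different routes.

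The paper does \emph{not} subtract and test. Instead, it simply reruns the entire Section~2 machinery on $\Omega$: Propositions~\ref{passo1} and~\ref{step2hyperplane} (Phragm\'en--Lindel\"of plus boundary Harnack) are re-proved with the comparison domains $D_k$ now spanning both halves of $\Omega$, and the Almgren quotient is recomputed on $\Omega_x=\{(\xi,\eta)\in\Omega:\xi<x\}$, i.e.\ including all of $C^L$. The finite energy on $C^L$ then kills the boundary term that previously sat at $\Gamma_0$, so the monotonicity and blow-down go through exactly as before, and uniqueness follows as in Proposition~\ref{step1}. In short: the paper treats $\Omega$ as a single ``semicylinder'' whose left boundary has been pushed to $-\infty$.

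Your route---extract $v_i\sim c_i\Phi$ from the frequency limit, normalize $c_1=c_2$, observe that $w=v_1-v_2$ then has finite energy on both halves, and kill it by testing against itself---is also correct and is arguably cleaner. Two points deserve the care you already flag. First, Proposition~\ref{step3} is stated for solutions of \eqref{problema_semicilindro}, which vanish at $x=0$; your $v_i\vert_{C^R}$ does not, so you must note that the blow-down Lemma~\ref{lemma_convergenza_v_xi} is insensitive to the data at $x=0$ (and that the ratio bound there, obtained in the paper via Harnack, follows for you directly from integrating $(\log H)'\le 2N+o(1)$ using the assumed frequency bound). Second, ``$v_i\sim c_i\Phi$'' must be upgraded to ``$v_i-c_i\Phi$ has finite energy on $C^R$''; this is not automatic from the blow-down alone, but follows from the eigenfunction expansion $v_i\vert_{C^R}=\sum_k\bigl(a_k e^{\sqrt{\lambda_k}x}+b_k e^{-\sqrt{\lambda_k}x}\bigr)\psi_k$ once one observes that $H(x)\lesssim e^{2\sqrt{\lambda_1^R}x}$ (from the frequency hypothesis) forces $a_k=0$ for $k\ge 2$.

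What each approach buys: the paper's is self-contained and parallel to Section~2, but leans on positivity through the boundary Harnack step. Yours decouples the asymptotic identification from the uniqueness mechanism, and the latter (the energy testing) uses no sign information at all---which is exactly why it generalizes to the sign-changing setting of Remark~\ref{rem:sign_changing} and Theorem~\ref{teo_sol_freq_lim_sign_changing}.
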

\begin{proof}
 The proof follows the same outline as the proof of Theorem \eqref{teorema_semicilindro}.

Propositions \eqref{passo1} and \eqref{step2hyperplane} can be stated and proved in the same way
choosing $D_k=\{(x,y)\in \Omega,\ -k < x < k\}$ in the first case and
$D_k=\{(x,y)\in \Omega,\ -k < x \leq 0\} \cup C_k$ in the second case.

We conclude the proof throughout an Almgren type argument on the domains
$\Omega_x = \{ (\xi,\eta)\in\Omega:\ \xi<x \}$ (but now $\Gamma_0 = \{ x=0 \} \cap \partial\Omega$)
in the first case and $\Omega_r = \{ (x,y)\in\Omega: x\leq0 \} \cup C_r$ in the second case.
In both cases the computations are the same.
\end{proof}

\begin{rem}\label{remark_compliance}
 As already highlighted in \cite{AFT13}, the minimum in Equation \eqref{min_prob} is strictly related to
the concept of \emph{compliance}. We define
\begin{equation}\label{def_compliance}
  \mathfrak{C}(\Gamma):=
\max_{w\in \mathcal
  D^{1,2}(\Omega)} \bigg(2
\int_{\Gamma}\dfrac{\partial \Phi}{\partial
x}_{|x=0} w-\int_{\Omega}|\nabla w|^2\,dx\bigg)
\end{equation}
the compliance functional
associated to a force concentrated on the section
$\Gamma$ in the flavor of \cite{BS,BSV}.  In general, the compliance
functional measures the rigidity of a membrane subject to a given
(vertical) force: the maximal rigidity is obtained by minimizing the
compliance functional $\mathfrak C(\Gamma)$ in a certain class of admissible
regions $\Gamma$.
\end{rem}

\subsection{Infinite energy solutions.}

Up to now, we have proved that given a positive profile $\phi$ on
$U^R$, there exist at least two positive solutions to the problem
\begin{eqnarray}\label{problema_semicilindro_profilo_positivo}
\left\{
  \begin{array}{ll}
    \Delta w =0, & \text{in $C^R$;} \\
    w = \phi, & \text{on $U^R$;} \\
    w = 0, & \text{on $\partial C^R \setminus U^R$.}
  \end{array}
\right.
\end{eqnarray}
Indeed, one has finite energy and it is the minimum of the Dirichlet
realization on $C^R$, we name it $u$; whereas the second one is
obtained from the previous simply adding a multiple of the solution
$v$ of the Theorem \eqref{teorema_semicilindro}.

\begin{thm}
Any positive solution to the problem
\eqref{problema_semicilindro_profilo_positivo} is a linear
combination $ u + c v$ with $c \geq 0$, being $u$ and $v$ as
mentioned above.
\end{thm}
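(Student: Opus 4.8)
The plan is to reduce the classification of \emph{all} positive solutions of \eqref{problema_semicilindro_profilo_positivo} to the already-established rigidity for the homogeneous Dirichlet problem \eqref{problema_semicilindro} on $C^R$. Let $w$ be an arbitrary positive solution of \eqref{problema_semicilindro_profilo_positivo} and set $z := w - u$, where $u$ is the finite-energy minimizer. Then $z$ is harmonic in $C^R$ and vanishes on all of $\partial C^R$ (its boundary trace on $U^R$ is $\phi - \phi = 0$, and on $\partial C^R \setminus U^R$ it is $0 - 0 = 0$), so $z$ is a (possibly sign-changing) solution of \eqref{problema_semicilindro}. The goal is to show $z = cv$ with $c \ge 0$; the real work is (i) pinning down that $z$ is a \emph{nonnegative} multiple of $v$ rather than an arbitrary sign-changing harmonic function, and (ii) ruling out other asymptotic modes $e^{\sqrt{\lambda_k}x}\psi_k(y)$ with $k \ge 2$ (resp.\ $r^{\overline N}v(1,\theta)$ with $\overline N \ne 1$ in the hyperplane case).

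First I would treat $z$ itself via the Almgren machinery already developed: by Proposition \ref{step3} (or Proposition \ref{step3hyperplane}), if $z$ is not identically zero it is asymptotic, as $x \to +\infty$, to $e^{\sqrt{\lambda_k}x}\psi_k(y)$ for some $k$ (resp.\ to $r^{\overline N}v(1,\theta)$). Since $u$ has finite energy it is bounded, indeed $u = O(1)$ as $x \to +\infty$ up to lower-order terms, so $w = u + z$ and $z$ have the same leading asymptotics; in particular $w(x,y) \sim c\, e^{\sqrt{\lambda_k}x}\psi_k(y)$ for some constant $c$ and index $k$. Because $w > 0$ on $C^R$, the leading profile must be nonnegative on $U^R$; but $\psi_k$ changes sign for $k \ge 2$, while for $k = 1$ we may take $\psi_1 > 0$. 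This forces $k = 1$, hence $z$ is asymptotic to $c\, e^{\sqrt{\lambda_1}x}\psi_1(y) = \tfrac{c}{2}\,v(x,y) + (\text{exponentially smaller})$, and necessarily $c \ge 0$ since $w > 0$. In the hyperplane case the same argument forces $\overline N = 1$ and a nonnegative multiple of $x$. Then $z - (c/2)v$ is a solution of \eqref{problema_semicilindro} that is $o(e^{\sqrt{\lambda_1}x})$ (resp.\ $o(x)$); one more application of the frequency dichotomy — its Almgren limit would have to be some $\sqrt{\lambda_j}$ or $+\infty$, all incompatible with decay strictly faster than the principal mode once one also uses that it vanishes on the boundary — shows $z - (c/2)v \equiv 0$.

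The step I expect to be the main obstacle is the last one: showing that a solution of \eqref{problema_semicilindro} whose growth is strictly slower than the principal eigenmode must vanish identically. The clean way is a Phragm\`en--Lindel\"of argument in the spirit of Theorem \ref{PL-principle} applied to $\pm(z - (c/2)v)$ against comparison functions built from $e^{\sqrt{\lambda_1}x}\psi_1$: if $z - (c/2)v$ is $o(e^{\sqrt{\lambda_1}x})$ uniformly, then for every $\eps > 0$ it is eventually dominated by $\eps e^{\sqrt{\lambda_1}x}\psi_1 + (\text{harmless term})$, and letting $\eps \to 0$ gives $z - (c/2)v \le 0$; the reverse inequality follows symmetrically. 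I would set this up carefully on the strip domains $D_k$ already used in Proposition \ref{passo1}. Once that vanishing lemma is in hand, the conclusion $w = u + cv$ with the renamed constant $c \ge 0$ is immediate, and the two listed solutions $u$ (energy-finite minimum) and $u + cv$ exhaust the positive cone, which is exactly the assertion.
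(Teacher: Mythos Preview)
Your reduction to $z := w - u$ and the closing Phragm\`en--Lindel\"of vanishing argument are both sound, but the middle step has a real gap. You invoke Proposition~\ref{step3} for the possibly sign-changing function $z$; however, the proof of that proposition goes through Lemma~\ref{lemma_N_monotona} and Remark~\ref{limitatezza_N}, both of which rely on Proposition~\ref{passo1}---a boundary-Harnack comparison between two \emph{positive} solutions---to obtain the lower bound $\int_{\Gamma_x} z^2 \gtrsim e^{2\sqrt{\lambda_1}x}$ and the upper bound on $N(z)$. For a function not known to be positive neither estimate is available a~priori, so you cannot conclude that $N(z)(x)$ admits a finite limit, nor that $z$ is asymptotic to a single eigenmode. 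The subsequent step (``$w>0$ forces $k=1$'') presupposes exactly the asymptotic you have not yet earned.

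The paper's proof avoids this by working with the positive function $w$ throughout. It first repeats the Phragm\`en--Lindel\"of and boundary-Harnack arguments of Proposition~\ref{passo1} on $w-u$ versus $v$ to obtain the two-sided estimate $c_1 \le (w-u)/v \le c_2$. It then computes the Almgren quotient of $w$ (not of $z$): since $w$ does not vanish on $\Gamma_0$, the formula for $N'(w)(x)$ acquires additional boundary terms involving $\int_{\Gamma_0} w_x^2$, $\int_{\Gamma_0} w_y^2$ and $\int_{\Gamma_0} w\,w_x$. The two-sided estimate guarantees $\int_{\Gamma_x} w^2 \asymp e^{2\sqrt{\lambda_1}x}$, which makes these extra terms $L^1$ in $x$; hence $N(w)(x)$ has a finite limit, and the blow-up argument behind Proposition~\ref{step3} finishes the proof. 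The missing ingredient in your approach is precisely that preliminary comparison step; without it the Almgren machinery has no traction on $z$.
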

\begin{proof}
Let $w>0$ be a solution to the problem
\eqref{problema_semicilindro_profilo_positivo}. If its energy is
finite, then it coincides with $u$ since in this case we have
uniqueness of solution.

If $w$ has an infinite energy, consider the difference $w-u$. Then,
we can immediately state that
$$\liminf _{x\rightarrow +\infty} \sup_{\Gamma_x'} \dfrac{w-u}{v} > 0$$
since if not, the Phragm\'{e}n-Lindel\"{o}f Theorem would imply
$w-u\leq 0$, a contradiction. As in the proof of Proposition
\eqref{passo1} we obtain
\begin{equation}\label{stima_grezza_w}
c_1 \leq \dfrac{w-u}{v} \leq c_2.
\end{equation}
We follow the same
outline as before and study the Almgren quotient $N(x)$ on
$\Omega_x^0:=\{(\xi,\eta)\in \R^N:\ \xi\in (0,x),\ \eta\in U^R\}$.
As before, $N(x)=\frac{D(x)}{H(x)}$ where
$D(x)=\int_{\Omega_x^0}\abs{\nabla w}^2$ and
$H(x)=\int_{\Gamma_x}w^2$ being $\Gamma_x=\{(x,\eta):\ \eta \in
U^R\}$. Multiplying the Laplace equation by $w$ itself, we obtain
$$\int_{\Omega_x^0}\abs{\nabla w}^2 = \int_{\Gamma_x}w\,w_x - \int_{\Gamma_0}w\,w_x.$$
Multiplying the Laplace equation by $w_x$ we obtain
$$\int_{\Omega_x^0}\nabla w \cdot \nabla w_x = \int_{\Gamma_x}{w_x}^2 - \int_{\Gamma_0}{w_x}^2$$
where
$$\int_{\Omega_x^0}\nabla w \cdot \nabla w_x = \int_{\partial \Omega_x^0}\dfrac{1}{2}\abs{\nabla w}^2\,\nu\cdot e_1
= \int_{\Gamma_x}\dfrac{1}{2}\abs{\nabla w}^2 -
\int_{\Gamma_0}\dfrac{1}{2}\abs{\nabla w}^2$$ so that
$$\int_{\Gamma_x}\abs{\nabla w}^2 = \int_{\Gamma_0}\abs{\nabla w}^2 + 2\int_{\Gamma_x}{w_x}^2 - 2\int_{\Gamma_0}{w_x}^2.$$
Thus, the derivative
\begin{eqnarray*}
N'(x) &=& \dfrac{D'(x)H(x)-D(x)H'(x)}{H^2(x)} \\
&=& \dfrac{\displaystyle\left(\int_{\Gamma_0}\abs{\nabla w}^2 +
2\int_{\Gamma_x}{w_x}^2 - 2\int_{\Gamma_0}{w_x}^2
\right)\int_{\Gamma_x}w^2 - \left(\int_{\Gamma_x}{w_x}^2 -
\int_{\Gamma_0}{w_x}^2\right)
\int_{\Gamma_x}2w\,w_x}{\displaystyle\left(\int_{\Gamma_x}w^2\right)^2}\\
&=&
\dfrac{\displaystyle2\left\{\int_{\Gamma_x}{w_x}^2\int_{\Gamma_x}{w}^2
- \left(\int_{\Gamma_x}w\,w_x\right)^2 \right\} +
\int_{\Gamma_0}{w_y}^2\int_{\Gamma_x}{w}^2 -
\int_{\Gamma_0}{w_x}^2\int_{\Gamma_x}{w}^2 +
2\int_{\Gamma_0}w\,w_x\int_{\Gamma_x}w\,w_x
}{\displaystyle\left(\int_{\Gamma_x}w^2\right)^2}\\
&\geq&
\dfrac{\displaystyle\int_{\Gamma_0}{w_y}^2-{w_x}^2}{\displaystyle\int_{\Gamma_x}w^2}
+ \dfrac{\displaystyle
2\int_{\Gamma_0}w\,w_x\int_{\Gamma_x}w\,w_x}{\displaystyle\left(\int_{\Gamma_x}w^2\right)^2}
\end{eqnarray*}
via H\"{o}lder inequality. Thanks to the estimate
\eqref{stima_grezza_w} the function
$$\dfrac{\displaystyle\int_{\Gamma_0}{w_y}^2-{w_x}^2}{\displaystyle\int_{\Gamma_x}w^2}
+ \dfrac{\displaystyle
2\int_{\Gamma_0}w\,w_x\int_{\Gamma_x}w\,w_x}{\displaystyle\left(\int_{\Gamma_x}w^2\right)^2}
\in L^1(0,+\infty),$$ so that $N(x)$ admits a limit as
$x\rightarrow+\infty$. Moreover, such a limit is finite since the
quantities $a_k$ and $b_k$ cannot diverge to infinity via Lemma
\eqref{boundary_Harnack} and Theorem \eqref{PL-principle} as in the
proof of Proposition \eqref{passo1}. We conclude the proof invoking
Proposition \eqref{step3}.
\end{proof}

\begin{thm}
Any positive solution to the problem \eqref{problema_omega} is a
linear combination $c^L v^L + c^R v^R$ with $c^L,\,c^R \geq 0$ (at least one of the two constants must be
different from zero), where
$v^L$ and $v^R$ are the solutions in the Theorem
\eqref{teorema_omega_energia_finita_da_una_parte} with finite energy
on $C^L$ and $C^R$ respectively.
\end{thm}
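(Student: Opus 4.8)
The plan is to show that every positive solution $w$ to \eqref{problema_omega} decomposes as $c^L v^L + c^R v^R$ with nonnegative constants. First I would invoke Lemma~1 to obtain the existence of the limits $l^R = \lim_{x\to+\infty} N(w)(x)$ and $l^L = \lim_{x\to-\infty} N(w)(x)$. Since $w$ is positive, the arguments already used for the half-cylinder force the positivity-driven threshold: $l^R = \sqrt{\lambda_1^R}$ and $l^L = \sqrt{\lambda_1^L}$. Indeed, along the right end the normalized blow-ups $w_\xi$ converge to a constant-frequency harmonic function on the full cylinder $\{x\in\R,\ y\in U^R\}$ which, being a nonnegative limit of positive functions, must be (a multiple of) $\mathrm{e}^{\sqrt{\lambda_1^R}x}\psi_1^R(y)$ by the connectedness of $U^R$ (or $x$ in the hyperplane case), and symmetrically at the left end. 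Hence $w$ has at least the first-eigenvalue decay at \emph{at most} one end and the corresponding growth at the other, so one of the two energies $\int_{C^L}|\nabla w|^2$, $\int_{C^R}|\nabla w|^2$ is finite (it cannot be finite at both ends by the remark preceding \eqref{enne_intro}).

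Say $\int_{C^L}|\nabla w|^2 < +\infty$; the other case is symmetric. Then $w$ lies in the class covered by Theorem~\ref{teorema_omega_energia_finita_da_una_parte}, which asserts uniqueness up to a constant of the solution with finite energy on $C^L$ and $l^R = \sqrt{\lambda_1^R}$. Therefore $w = c^L v^L$ for some $c^L > 0$, and we are done with $c^R = 0$. The remaining —and genuinely new— case is when $w$ has infinite energy on \emph{both} $C^L$ and $C^R$. Here the idea is to subtract: having normalized so that the right-end asymptotics of $w$ matches that of $v^R$, consider $\tilde w := w - c^R v^R$ where $c^R$ is chosen (via the boundary Harnack argument of Proposition~\ref{passo1}, applied on the ends) so that $\limsup_{x\to+\infty}\sup_{\Gamma_x}\tilde w / v^R \le 0$. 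By the Phragmèn--Lindelöf Principle (Theorem~\ref{PL-principle}), with $D_k = \{(x,y)\in\Omega: -k<x<k\}$ and the comparison family $w_k = c^R v^R\chi_{D_k} + \eps$, one obtains $\tilde w \le 0$ on all of $\Omega$; hence $w \le c^R v^R$, which gives $\tilde w$ finite energy on $C^R$ (since $v^R$ does). But then $\tilde w$ is a solution with finite energy on $C^R$, so by Theorem~\ref{teorema_omega_energia_finita_da_una_parte} (in its $C^R$-version) $\tilde w = c^L v^L$, whence $w = c^L v^L + c^R v^R$. One then checks $c^L\ge0$: if $c^L<0$ then $w < c^R v^R$ near $+\infty$ would eventually be forced negative somewhere, contradicting $w>0$; more cleanly, $w>0$ and $v^R>0$, $v^L>0$ together with the finite-energy characterization pin down the sign.

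The main obstacle is the last case: making precise the choice of $c^R$ so that the Phragmèn--Lindelöf comparison actually applies, i.e.\ establishing the two-sided bound $c_1 v^R \le w \le c_2 v^R$ near $+\infty$ in the spirit of \eqref{stima_grezza_w} before one knows $w$ has the right asymptotic profile. This requires re-running the boundary Harnack inequality (Lemma~\ref{boundary_Harnack}) together with the monotone-frequency machinery of the previous section on $\Omega$ rather than on $C^R$, to guarantee that $w$ and $v^R$ share the \emph{same} exponential rate $\mathrm{e}^{\sqrt{\lambda_1^R}x}$ at the right end (not merely that $N(w)\to\sqrt{\lambda_1^R}$, which a priori only controls the $L^2$-growth up to subexponential factors); the connectedness of $U^R$ and the simplicity of $\lambda_1^R$ are what upgrade frequency convergence to matching of the full asymptotic profile, after which subtracting the correct multiple $c^R v^R$ strictly lowers the right-end frequency below $\sqrt{\lambda_1^R}$ and thus forces finite energy there.
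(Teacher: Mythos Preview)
Your outline diverges from the paper's argument and, as written, has a genuine gap in the ``both energies infinite'' case. The paper's proof is a single direct application of the Phragm\`en--Lindel\"of Principle to $w-(c^Lv^L+c^Rv^R)$, with comparison function $c^Lv^L+c^Rv^R+1$ on the exhaustion $D_k=\{-k<x<k\}$ (respectively the half-ball variant when $U^R$ is a hyperplane). The point is that $v^L$ grows at $+\infty$ while $v^R$ grows at $-\infty$, so the \emph{sum} $c^Lv^L+c^Rv^R$ dominates at both ends simultaneously once $c^L,c^R$ are taken from the boundary Harnack bounds of Proposition~\ref{passo1}. No case split on finite versus infinite energy is needed.

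Your one-sided subtraction does not close. First, a notational slip: $v^R$ has finite energy on $C^R$ and therefore \emph{decays} at $+\infty$, so ``matching the right-end asymptotics of $w$ to $v^R$'' is impossible when $w$ grows there; you mean $v^L$. Correcting this, set $\tilde w=w-c^Lv^L$. On the right you can arrange $\tilde w\le0$ via boundary Harnack and Phragm\`en--Lindel\"of, but on the left $v^L$ decays while $w$ grows, so $\tilde w\approx w$ is large and positive on the left piece of $\Gamma_k'$; the hypothesis $\liminf_k\sup_{\Gamma_k'}\tilde w/w_k\le0$ of Theorem~\ref{PL-principle} fails for your choice $w_k=c^Lv^L\chi_{D_k}+\eps$. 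Moreover, even on $C^R$ the inequalities you obtain are only $-c^Lv^L<\tilde w\le0$, and since $v^L$ grows there this does \emph{not} yield finite energy of $\tilde w$ on $C^R$: subtracting the leading $\e^{\sqrt{\lambda_1^R}x}\psi_1^R$ mode could in principle leave an $\e^{\sqrt{\lambda_2^R}x}$ contribution. Your ``main obstacle'' paragraph correctly senses that upgrading $N(w)\to\sqrt{\lambda_1^R}$ to an exact asymptotic profile is the missing ingredient, but the argument as written does not supply it. (Incidentally, the claim at the end of your first paragraph that one of the two energies must be finite is false: $v^L+v^R$ is positive with $l^{R}=\sqrt{\lambda_1^R}$, $l^{L}=\sqrt{\lambda_1^L}$, and infinite energy on both sides.) The paper avoids all of this by subtracting both basis solutions at once, so that the comparison function is large at both ends and a single Phragm\`en--Lindel\"of step suffices.
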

\begin{proof}
The proof relies essentially on the Phragmen-Lindel\"{o}f Principle.
Let $w>0$ a solution to the problem \eqref{problema_omega}. We
simply apply the aforementioned principle on $w-(c^L v^L + c^R v^R)$
comparing it with $c^L v^L + c^R v^R +1$. In this case we choose the
sequence of domains $D_k$ as the union $\{(\xi,\eta): \xi\in(0,k)\
\eta \in U^R\} \cup \{(\xi,\eta): \xi\in(-k,0)\ \eta \in U^L\}$
whenever $U^R$ is bounded, whereas $\{(\xi,\eta): \xi\in(-k,0)\ \eta
\in U^L\} \cup (C^R \cap B_k(\overline x))$ where $\overline x$ is
the junction point between $C^L$ and $C^R$ whenever $U^R$ is the
whole hyperplane.
\end{proof}
We stress that such solutions have
$\lim_{x\rightarrow\pm\infty}N(x)$ lowest as possible in order to be nontrivial, that is
$\sqrt{\lambda_1^R}$ and $\sqrt{\lambda_1^L}$ respectively. 
We note that $v^L$ is asymptotic to a multiple of $ \e^{-\sqrt{\lambda_1^L}x} \psi_1^L$
as $x\to-\infty$.
Does
the reverse implication hold true? Not exactly, but we can state
\begin{thm}\label{teo_sol_freq_lim}
The function set
\begin{eqnarray*}
\mathcal S:=\left\{w \text{ solution to \eqref{problema_omega} s.t.
} \lim_{x\rightarrow +\infty} N(x) \leq \sqrt{\lambda_1^R} \text{ or }
    \lim_{x\rightarrow -\infty} N(x) \leq \sqrt{\lambda_1^L} \right\}
\end{eqnarray*}
is a linear space of dimension 2 and $\{v^L,v^R\}$ is a basis, being
$v^L,\,v^R$ as in the previous theorem.
\end{thm}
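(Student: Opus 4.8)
Let $W$ denote the linear span of $v^L$ and $v^R$. Since $v^L$ and $v^R$ are linearly independent — a relation $\alpha v^L+\beta v^R=0$ restricted to $C^R$, where $v^L$ has infinite and $v^R$ finite energy, forces $\alpha=0$ and then $\beta=0$ — the theorem amounts to proving $\mathcal S=W$. The inclusion $W\subseteq\mathcal S$ is the easy half: given $w=c^Lv^L+c^Rv^R$ with $c^L\neq0$, the summand $c^Rv^R$ has finite energy on $C^R$ and therefore decays there, so $w$ has the same leading behaviour at $+\infty$ as $c^Lv^L$, namely a nonzero multiple of $\e^{\sqrt{\lambda_1^R}x}\psi_1^R(y)$ (resp.\ of $x$, if $U^R$ is a hyperplane) by Theorem \ref{teorema_omega_energia_finita_da_una_parte}; a direct evaluation of the Almgren quotient then yields $\lim_{x\to+\infty}N(w)(x)=\sqrt{\lambda_1^R}$, so $w\in\mathcal S$. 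If instead $c^L=0$ and $c^R\neq0$, the symmetric computation at $-\infty$ gives $\lim_{x\to-\infty}N(w)(x)\le\sqrt{\lambda_1^L}$, and $w\equiv0$ lies trivially in $\mathcal S$.

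For the reverse inclusion let $w\in\mathcal S$, $w\not\equiv0$. Both the problem \eqref{problema_omega} and the class $\mathcal S$ are invariant under $x\mapsto-x$ together with the interchange of $L$ and $R$ (and of $v^L$ with $v^R$), so we may assume $\lim_{x\to+\infty}N(w)(x)\le\sqrt{\lambda_1^R}$; this limit exists and belongs to $\{\sqrt{\lambda_j^R}\}_j\cup\{+\infty\}$, so it equals $\sqrt{\lambda_1^R}$. In particular $w$ has infinite energy on $C^R$: otherwise $\int_{\Gamma_x}w^2\to0$ while $\int_{\Omega_x}\abs{\nabla w}^2$ stays bounded away from $0$, forcing $N(w)(x)\to+\infty$. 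The asymptotics of $w$ at $+\infty$ are unaffected by its boundary datum on the compact junction $\Gamma$, so Proposition \ref{step3} applies to $w$ on $C^R$; combined with $\lim_{x\to+\infty}N(w)(x)=\sqrt{\lambda_1^R}$ and the simplicity of $\lambda_1^R$ (valid since $U^R$ is connected), it gives that $w$ is asymptotic to $c_w\,\e^{\sqrt{\lambda_1^R}x}\psi_1^R(y)$ (resp.\ $c_w\,x$) with $c_w\neq0$, and likewise $v^L$ is asymptotic to $c_L\,\e^{\sqrt{\lambda_1^R}x}\psi_1^R$ with $c_L\neq0$. Set $\hat w:=w-(c_w/c_L)\,v^L$, again a solution of \eqref{problema_omega}. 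Expanding $\hat w$ on $C^R$ in the Dirichlet eigenfunctions of $U^R$ by separation of variables, $\hat w=\sum_k\bigl(a_k\e^{\sqrt{\lambda_k^R}x}+b_k\e^{-\sqrt{\lambda_k^R}x}\bigr)\psi_k^R$ (the hyperplane case being analogous, with homogeneity degree in place of $\sqrt{\lambda_k^R}$; cf.\ Proposition \ref{step3hyperplane}), the choice of $c_w/c_L$ annihilates the coefficient $a_1$; and since $\lambda_k^R\ge\lambda_1^R$ for every $k$, no faster--growing mode can survive either, so all $a_k$ vanish. Hence $\hat w$ decays at $+\infty$: it has finite energy on $C^R$.

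It remains to identify $\hat w$. If $\hat w\equiv0$ then $w=(c_w/c_L)\,v^L\in W$. Otherwise $\hat w$ has infinite energy on $C^L$ — a harmonic function on $\Omega$ vanishing on $\partial\Omega$ and of finite total energy is identically $0$, by testing the equation with itself — so $\lim_{x\to-\infty}N(\hat w)(x)=\sqrt{\lambda_j^L}$ for some $j\ge1$. The hard part will be to prove $j=1$, equivalently that no nontrivial solution can carry the frequency $\sqrt{\lambda_1^R}$ at $+\infty$ and a strictly higher frequency at $-\infty$; I expect to obtain this via a Phragmén--Lindelöf comparison (Theorem \ref{PL-principle}) of $\hat w$ — equivalently of $w$ — against large multiples of the positive solution $v^R$ on the left chamber, the definite sign of the leading mode $\psi_1^R$ at $+\infty$ (simplicity of $\lambda_1^R$) being what propagates the bound across $\Gamma$; this is the step where the genuine change of geometry between the two chambers, as opposed to a straight cylinder, must be used. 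Once $j=1$ is established, $\hat w$ has finite energy on $C^R$ and $\lim_{x\to-\infty}N(\hat w)(x)=\sqrt{\lambda_1^L}$, so Theorem \ref{teorema_omega_energia_finita_da_una_parte} with $L$ and $R$ interchanged forces $\hat w$ to be a multiple of $v^R$; therefore $w\in W$. Together with the first paragraph this gives $\mathcal S=W$, a two--dimensional space with basis $\{v^L,v^R\}$.
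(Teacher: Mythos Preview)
Your argument is well organized up to the point you yourself flag as ``the hard part'': showing that the difference $\hat w=w-(c_w/c_L)v^L$, which has finite energy on $C^R$, necessarily satisfies $\lim_{x\to-\infty}N(\hat w)(x)=\sqrt{\lambda_1^L}$. You do not prove this; you only announce an intention to use Phragm\'en--Lindel\"of. That is a genuine gap, and in fact the approach you sketch cannot succeed under the literal ``or'' reading of $\mathcal S$. Here is why: by the paper's own Remark~\ref{rem:sign_changing} (applied with $L$ and $R$ exchanged) there exists a solution $u_2$ with finite energy on $C^R$ and $\lim_{x\to-\infty}N(u_2)(x)=\sqrt{\lambda_2^L}$. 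Then $w:=v^L+u_2$ solves \eqref{problema_omega}, and since $u_2$ decays on $C^R$ while $v^L$ grows like $\e^{\sqrt{\lambda_1^R}x}\psi_1^R$, one gets $\lim_{x\to+\infty}N(w)(x)=\sqrt{\lambda_1^R}$, so $w\in\mathcal S$ via the first alternative. But $\hat w=w-v^L=u_2$ is \emph{not} a multiple of $v^R$, so $w\notin\mathrm{span}\{v^L,v^R\}$. Thus your ``hard part'' is false as stated, and no Phragm\'en--Lindel\"of comparison will rescue it --- the obstruction is structural, not technical (and note that $\hat w$ changes sign, so a comparison with the positive $v^R$ is not available anyway).

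The resolution is that the theorem must be read with the bilateral condition of \eqref{S} (i.e.\ the frequency bound is imposed at each end where the energy is infinite), consistently with Theorem~\ref{teo_sol_freq_lim_sign_changing}; the ``or'' in the displayed definition of $\mathcal S$ is at best informal. Under that reading your symmetry reduction still lets you assume the bound at $+\infty$, but now you \emph{also} have $l^L\le\sqrt{\lambda_1^L}$ whenever $w$ has infinite energy on $C^L$. Since $v^L$ has finite energy on $C^L$, the function $\hat w=w-(c_w/c_L)v^L$ has the same leading behaviour as $w$ at $-\infty$; hence either $\hat w$ has finite energy on $C^L$ (and then finite total energy, forcing $\hat w\equiv0$), or $\lim_{x\to-\infty}N(\hat w)(x)=\sqrt{\lambda_1^L}$. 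In the latter case Theorem~\ref{teorema_omega_energia_finita_da_una_parte} with $L$ and $R$ interchanged gives $\hat w\in\R\,v^R$, and you are done. This fills your gap in one line --- no Phragm\'en--Lindel\"of needed --- and matches the spirit of the paper, which does not write out a separate proof but views the statement as the case $d^R=\lambda_1^R$, $d^L=\lambda_1^L$ of Theorem~\ref{teo_sol_freq_lim_sign_changing}.
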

We remark that in this case no positivity assumption can be made on
solutions, but we can state that they change their sign at most just
one time.

\begin{rem}\label{rem:sign_changing}
The procedure presented up to now works even in the case the upper bound for the Almgren frequency 
is set to be a $k$-th eigenvalue of the problem \eqref{k-esima_autofunzione} with $k\geq 2$, up to minor modifications.
This allows us to extend Theorem \ref{teorema_omega_energia_finita_da_una_parte} 
providing the following
 \begin{thm}
Let $\lambda_k^R$ be the $k$-th eigenvalue of the problem \eqref{k-esima_autofunzione} 
and let us denote $m_k^R$ its multiplicity. Then, there exist exactly 
$m_k^R$ linearly independent solutions to the problem \eqref{problema_omega} having finite energy 
on $C^L$ and satisfying
\[    \lim_{x\rightarrow +\infty} N(x) = \sqrt{\lambda_k^R}.
\] 
Each of them is asymptotic to a multiple of $\e^{\sqrt{\lambda_k^R}x}\psi_k^R(y)$, 
being $\psi_k^R$ one the eigenfunctions relative to $\lambda_k^R$, if $U^R$ is bounded, whereas 
each of them is asymptotic to a multiple of $r^{\overline N} v(1,\theta)$ if $U^R$ is a whole hyperplane, 
in such a way that $[\overline N(\overline N -1) + \overline N (N-1)]$ is an eigenvalue for the spherical Laplacian
and $v(1,\theta)$ is one of its relative eigenfunctions.
 \end{thm}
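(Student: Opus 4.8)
The plan is to run the same machinery used for Theorem \ref{teorema_semicilindro} and transported to $\Omega$ in Theorem \ref{teorema_omega_energia_finita_da_una_parte}, replacing the frequency threshold $\sqrt{\lambda_1^R}$ everywhere by $\sqrt{\lambda_k^R}$; the only genuinely new feature is that the admissible asymptotic profiles at $+\infty$ now fill the whole $\lambda_k^R$-eigenspace, which is what yields the count $m_k^R$. I would split the statement into (a) producing $m_k^R$ independent solutions of the required type, and (b) showing that, modulo solutions whose frequency at $+\infty$ is strictly smaller, there are no more.

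For (a), fix an $L^2(U^R)$-orthonormal basis $\psi_k^{R,(1)},\dots,\psi_k^{R,(m_k^R)}$ of the eigenspace of $\lambda_k^R$ in \eqref{k-esima_autofunzione}; when $U^R$ is bounded, set on $C^R$
\[ \Phi_i(x,y):=\bigl(\e^{\sqrt{\lambda_k^R}x}-\e^{-\sqrt{\lambda_k^R}x}\bigr)\psi_k^{R,(i)}(y), \]
which, just like the function in \eqref{v}, is harmonic on $C^R$ and vanishes on $\partial C^R$, hence on $\Gamma$, so it extends by $0$ to $\Omega$. Running the construction that precedes Theorem \ref{teorema_omega_energia_finita_da_una_parte} --- solving the minimum problem \eqref{min_prob} with $\Phi$ replaced by $\Phi_i$ --- produces $u_i\in\mathcal D^{1,2}(\Omega)$, harmonic in $\Omega\setminus\Gamma$, such that $v_i:=u_i+\Phi_i$ is a $\mathcal C^1$ solution of \eqref{problema_omega}. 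It has finite energy on $C^L$ since $\Phi_i\equiv0$ there while $u_i$ has finite energy on all of $\Omega$, and, since the finite-energy harmonic function $u_i|_{C^R}$ decays exponentially (Poincaré on the cross-section), $v_i$ is asymptotic to $\e^{\sqrt{\lambda_k^R}x}\psi_k^{R,(i)}(y)$ with $\lim_{x\to+\infty}N(v_i)(x)=\sqrt{\lambda_k^R}$. Comparing the $\e^{\sqrt{\lambda_k^R}x}$-order terms of the expansions on $C^R$ gives linear independence ($\sum_i c_iv_i\equiv0\Rightarrow\sum_i c_i\psi_k^{R,(i)}\equiv0\Rightarrow c\equiv0$), and for the same reason every nonzero combination $\sum_i c_iv_i$ is still asymptotic to $\e^{\sqrt{\lambda_k^R}x}\bigl(\sum_i c_i\psi_k^{R,(i)}\bigr)$ with nonzero profile, so it keeps frequency $\sqrt{\lambda_k^R}$. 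When $U^R$ is the halfspace I would argue identically with $\Phi_i:=r^{\overline N_i}v_i(1,\theta)$, $v_i(1,\cdot)$ a Dirichlet eigenfunction of the spherical Laplacian on the half-sphere with the prescribed homogeneity $\overline N_i$ (homogeneous harmonic, vanishing on $\{x=0\}$, hence extendable by $0$).

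For (b), let $w$ be any solution with finite energy on $C^L$ and $\lim_{x\to+\infty}N(w)(x)=\sqrt{\lambda_k^R}$, $N$ the Almgren quotient on $\Omega_x=\{(\xi,\eta)\in\Omega:\ \xi<x\}$ as in the proof of Theorem \ref{teorema_omega_energia_finita_da_una_parte} (finite precisely because $\int_{C^L}\abs{\nabla w}^2<\infty$). The computation of Lemma \ref{lemma_N_monotona} gives that $N(w)(x)-\tfrac{C}{\sqrt{\lambda_1^R}}\e^{-\sqrt{\lambda_1^R}x}$ is nondecreasing, so $N(w)(x)\le\sqrt{\lambda_k^R}+C\e^{-\sqrt{\lambda_1^R}x}$; since $D(x)=N(w)(x)H(x)$ and $D(x)=\tfrac12H'(x)-\int_{\Gamma_0}w\,w_x$, this yields $(\log H)'(x)\le2\sqrt{\lambda_k^R}+o(1)$, hence $\nor{w(x,\cdot)}_{L^2(U^R)}\le C\e^{\sqrt{\lambda_k^R}x}$, which elliptic estimates up to the lateral boundary upgrade to $\abs{w(x,y)}\le C\e^{\sqrt{\lambda_k^R}x}$ uniformly in $y$. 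With this bound the rescaling argument of Lemma \ref{lemma_convergenza_v_xi} goes through unchanged --- $v_\xi:=w(\cdot+\xi,\cdot)/\nor{w(\xi,\cdot)}_{L^2}$ converges in $\mathcal C^1_{loc}$ to a harmonic $\widetilde w$ with $N(\widetilde w)\equiv\sqrt{\lambda_k^R}$ --- and Lemma \ref{lemma_N_costante} forces $\widetilde w(x,y)=\e^{\sqrt{\lambda_k^R}x}\psi(y)$, $\psi$ a nonzero eigenfunction of $\lambda_k^R$ (the eigenvalue is $\lambda_k^R$, not a smaller one, exactly because $N(\widetilde w)=\sqrt{\lambda_k^R}$). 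Thus $w$ is asymptotic to $\e^{\sqrt{\lambda_k^R}x}\psi(y)$ and the map $w\mapsto\psi$ lands in the $m_k^R$-dimensional eigenspace. If $w_1,\dots,w_{m_k^R+1}$ were solutions of this type their profiles would be dependent, so some nonzero combination would have vanishing $\e^{\sqrt{\lambda_k^R}x}$-term; its Almgren quotient then has a limit in $\{\sqrt{\lambda_j^R}\}\cup\{+\infty\}$ which cannot be $\sqrt{\lambda_k^R}$ (else the classification just proved gives a nonzero profile), hence is $\le\sqrt{\lambda_{k-1}^R}$, contradicting that it is a solution of this type. So $m_k^R$ is exactly the maximal number of solutions all of whose nonzero combinations keep frequency $\sqrt{\lambda_k^R}$, realized by the $v_i$; equivalently the count follows from Theorem \ref{teo_sol_freq_lim_sign_changing} used with $d^R=\lambda_k^R$ and with $d^R=\lambda_{k-1}^R$, by taking the quotient of the two corresponding spaces $\mathcal S_L$. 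The halfspace case is handled in the same way, with the quotient \eqref{enne_hyperplane} and Lemmas \ref{lemma_N_monotona_hyperplane}--\ref{lemma_N_costante_hyperplane} replacing the cylindrical ones.

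The step I expect to be the real obstacle is the a priori exponential bound $\abs{w}\le C\e^{\sqrt{\lambda_k^R}x}$ for possibly sign-changing $w$: the comparison-principle route of Proposition \ref{passo1} (Phragm\'en--Lindel\"of plus boundary Harnack) is unavailable without positivity, so the bound has to be squeezed out of the monotonicity of the Almgren quotient as above. The other delicate point is pure bookkeeping: the answer is the bare multiplicity $m_k^R$, not the Morse index $\sum_{j\le k}m_j^R$, which is exactly why one must factor out the solutions whose frequency at $+\infty$ drops below $\sqrt{\lambda_k^R}$.
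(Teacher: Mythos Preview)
Your proposal matches the paper's approach: the paper's own ``proof'' is literally the sentence that the procedure of Theorem~\ref{teorema_omega_energia_finita_da_una_parte} works with the threshold $\sqrt{\lambda_1^R}$ replaced by $\sqrt{\lambda_k^R}$ ``up to minor modifications'', and your construction via \eqref{min_prob} with $\Phi_i$ built from $\psi_k^{R,(i)}$, together with the Almgren/rescaling classification, is exactly that procedure spelled out.

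There is one point where your argument is not yet closed. You invoke Lemma~\ref{lemma_N_monotona} to conclude that $N(w)(x)-C\e^{-\sqrt{\lambda_1^R}x}$ is nondecreasing, and from this you extract the a~priori bound $H(x)\le C'\e^{2\sqrt{\lambda_k^R}x}$. But the last inequality in the proof of Lemma~\ref{lemma_N_monotona} (and the analogous estimate for $\Omega_x$, with $\Gamma_0=\{x=0\}\cap\partial\Omega$ as in the proof of Theorem~\ref{teorema_omega_energia_finita_da_una_parte}) uses a \emph{lower} bound on $H(x)=\int_{\Gamma_x}w^2$, which in the paper comes from Proposition~\ref{passo1}, i.e.\ from positivity. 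Since you have correctly noted that Proposition~\ref{passo1} is unavailable here, you must supply that lower bound differently before you can use the almost-monotonicity. The fix is short: from $H''(x)=2\int_{\Gamma_x}|\nabla w|^2\ge 2\lambda_1^R H(x)$ (Poincar\'e on the cross-section, no sign needed) and $H'\ge0$ one gets exponential growth of $H$, hence $\int_{\Gamma_0}w_x^2/H(x)$ is integrable in $x$; this gives the almost-monotonicity of $N$ and your chain $N(x)\le\sqrt{\lambda_k^R}+o(1)\Rightarrow(\log H)'\le 2\sqrt{\lambda_k^R}+o(1)$ then goes through. With this patch the remainder of your argument (Lemmas~\ref{lemma_convergenza_v_xi} and~\ref{lemma_N_costante}, and their half-space analogues) applies verbatim.

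A minor remark: your ``equivalently'' at the end, appealing to Theorem~\ref{teo_sol_freq_lim_sign_changing} with $d^R=\lambda_k^R$ versus $d^R=\lambda_{k-1}^R$, is circular in the paper's logical order, since the present theorem is what the paper uses to \emph{prove} Theorem~\ref{teo_sol_freq_lim_sign_changing}; your direct profile-matching argument is the right one.
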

Thus, Theorem \ref{teo_sol_freq_lim_sign_changing} is finally proved.
\end{rem}

\section{Frequency transfer from two consecutive cylinders}\label{sec:transfer}

Let us focus our attention on the unique solution which has finite
energy at $-\infty$. We are talking about $u+\Phi$, where $u$ is the
minimum of \eqref{min_prob} and $\Phi$ the unique solution of the
problem \eqref{problema_semicilindro}. Thanks to the uniqueness of
such a solution, whenever we impose the exact behavior at
$x\rightarrow +\infty$, the asymptotic behavior for $x\rightarrow
-\infty$ is determined. We aim to investigate how such a fact
occurs.

\begin{rem}\label{andamento_u^L,R}
Via the Phragm\`{e}n-Lindel\"{o}f Theorem, the restrictions
$u^L:=u\chi_{C^L}$ and $u^R:=u\chi_{C^R}$ are $u^L(x,y) =
O(\e^{\sqrt{\lambda_1^L}x}\varphi_1^L(y))$ whereas
$u^R(x,y)=O(\e^{-\sqrt{\lambda_1^R}x}\varphi_1^R(y))$. Indeed, given
the particular domain's geometry, $u^L$ and $u^R$ can be written as
$\sum_k c_k^L(x)\varphi_k^L(y)$ and $\sum_k c_k^R(x)\varphi_k^R(y)$
respectively. Then, imposing that $\Delta u^i=0$ for $i=L,R$ and
that their energy is finite, they take the form
\begin{equation}\label{sviluppi}
u^L(x,y)=\sum_k \alpha_k\e^{\sqrt{\lambda_k^L}x}\varphi_k^L(y)
\qquad u^R(x,y)=\sum_k
\beta_k\e^{-\sqrt{\lambda_k^R}x}\varphi_k^R(y)
\end{equation}
where the eigenfunctions $\{\varphi_k^L\}$ and $\{\varphi_k^R\}$ are
basis for $L^2(U^L)$ and $L^2(U^R)$ respectively.
\end{rem}

The key points for this analysis are Equation \eqref{saltoderivata}
together with the fact that the two profiles of $u^L$ and $u^R$
coincides on the boundary $\{(x,y)\in\Omega,\, x=0\}$.

In particular, Equation \eqref{saltoderivata} makes sense in a
distributional sense, so that it should be read in the dual space
$H^{-1/2}(U^L)$. Indeed, both $u^L$ and $u^R$ are $\D^{1,2}$
functions on $C^L$ and $C^R$ respectively, then their traces on
$\{x=0\}$ are $H^{1/2}$ functions and then their partial derivatives
on $\{x=0\}$ are in $H^{-1/2}(U^L)$ and $H^{-1/2}(U^R)$
respectively. In order to specify these concepts, we introduce the
following spaces
\begin{eqnarray*}
\mathfrak{h}^{1/2}_L &:=& \{(\alpha_j)_j \ \text{s.t.}\ \sum_j
\left(\lambda_j^L\right)^{1/2}{\alpha_j}^2<+\infty \},\\
\mathfrak{h}^{1/2}_R &:=& \{(\alpha_j)_j \ \text{s.t.}\ \sum_j
\left(\lambda_j^R\right)^{1/2}{\alpha_j}^2<+\infty \},
\end{eqnarray*}
being $\lambda_j^L$ and $\lambda_j^R$ the eigenvalues of
$\Delta_{N-1}$ on $U^L$ and $U^R$ respectively, and operators
\begin{eqnarray*}
\U :  \mathfrak h^{1/2}_L &\longrightarrow& \mathfrak h^{1/2}_R \\
\alpha=(\alpha^j)_j &\longmapsto& (\U(\alpha))^k=\U^k_j \alpha^j\\
\widetilde{\U} : H^{1/2}(U^L) &\longrightarrow& H^{1/2}(U^R)\\
u=\alpha^j \varphi_j^L &\longmapsto&
\widetilde{\U}u=(\U^k_j\alpha^j) \varphi_k^R.
\end{eqnarray*}
Moreover, $\widetilde{\mathcal U}^*: H^{-1/2}(U^R) \longrightarrow
H^{-1/2}(U^L)$ will be the adjoint operator.

These mean that Equation \eqref{saltoderivata} is correctly read as
\begin{equation}
\dfrac{\partial u^L}{\partial x}_{|x=0} = \widetilde{\mathcal
U}^*\left(\dfrac{\partial u^R}{\partial x}_{|x=0}\right)
+\widetilde{\mathcal U}^*\left(\dfrac{\partial \Phi}{\partial
x}_{|x=0}\right) \qquad \text{in $H^{-1/2}(U^L)$}.
\end{equation}
which is
\begin{equation}
\alpha_j\sqrt{{\lambda_j}^L}{\varphi_j}^L - \widetilde{\mathcal
U}^*\left(\beta_k\sqrt{{\lambda_k}^R}{\varphi_k}^R\right) =
\widetilde{\mathcal U}^*\left(\gamma_k{\varphi_k}^R\right)
\end{equation}
where $\gamma_k$ are the coefficients of $\frac{\partial
\Phi}{\partial x}_{|x=0}$. Thus the equation for the coefficients
becomes
\begin{eqnarray}\label{saltoderivata_coefficienti_giusta}
\alpha_j\sqrt{{\lambda_j}^L} - \mathcal
U^*\left(\beta_k\sqrt{{\lambda_k}^R}\right) = \mathcal
U^*\left(\gamma_k\right) \nonumber \\
\alpha_j\sqrt{{\lambda_j}^L} - \mathcal
U^*\left(\sqrt{{\lambda_k}^R}\mathcal U^j_k \alpha_j\right) =
\mathcal U^*\left(\gamma_k\right)
\end{eqnarray}
since $\beta_k=\alpha_j \mathcal U^{jk}$ from the fact
$u^L(0,y)=u^R(0,y)=\sum_k\beta_k\psi_k^R(y)$.

Equation \eqref{saltoderivata_coefficienti_giusta} becomes
\begin{eqnarray}\label{saltoderivata_coefficienti_giusta_2}
\Lambda^L\alpha - \mathcal U^*\Lambda^R\mathcal U\, \alpha &=&
\alpha_0 \nonumber \\
\left(\Lambda^L - \mathcal U^*\Lambda^R\mathcal U\right)\alpha &=&
\alpha_0\nonumber \\
\left(\mathbb I - (\Lambda^L)^{-1}\mathcal U^*\Lambda^R\mathcal
U\right)\alpha &=& \alpha_0
\end{eqnarray}
where $\alpha_0=(\Lambda^L)^{-1}\mathcal U^*\left(\gamma_k\right)$,
$\Lambda^R$ the diagonal operator between $\mathfrak{h}^{1/2}_R$ and
$\mathfrak{h}^{-1/2}_R$ which multiplies by the square root of the
eigenvalues $\sqrt{{\lambda_j}^R}$, which is in fact an isometry
between those two spaces, whereas $(\Lambda^L)^{-1}$ is analogously
an isometry from $\mathfrak{h}^{-1/2}_L$ into
$\mathfrak{h}^{1/2}_L$.

\begin{prop}
The operator $T=(\Lambda^L)^{-1}\mathcal U^*\Lambda^R\mathcal U$ is
a contraction on $\mathfrak{h}^{1/2}_L$.
\end{prop}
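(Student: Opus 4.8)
The plan is to reduce the assertion to a one–line operator identity for $T$, then to the energy inequality $\nor{\widetilde{\U}g}_{\dot H^{1/2}(U^R)}\le\nor{g}_{\dot H^{1/2}(U^L)}$ for boundary traces, and finally to prove this inequality by reflecting the harmonic extension living on $C^L$ and extending it trivially into the wider chamber $C^R$.

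First I would observe that $T$ is symmetric and nonnegative on the Hilbert space $\mathfrak h^{1/2}_L$, whose inner product is $\langle\alpha,\beta\rangle_L=\sum_j\sqrt{\lambda_j^L}\,\alpha_j\beta_j$. Since $\Lambda^L$ is precisely the canonical isometry of $\mathfrak h^{1/2}_L$ onto its dual $\mathfrak h^{-1/2}_L$, and likewise for $\Lambda^R$, one has $\Lambda^L T=\U^*\Lambda^R\U$, so that, with $\langle\cdot,\cdot\rangle$ denoting the duality pairings,
\[
\langle T\alpha,\alpha\rangle_L=\langle\Lambda^L T\alpha,\alpha\rangle=\langle\U^*\Lambda^R\U\alpha,\alpha\rangle=\langle\Lambda^R\U\alpha,\U\alpha\rangle=\nor{\U\alpha}_{\mathfrak h^{1/2}_R}^2\ \ge\ 0,
\]
and the analogous computation with two vectors gives $\langle T\alpha,\beta\rangle_L=\langle T\beta,\alpha\rangle_L$. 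Hence $\nor T=\sup_{\alpha\neq0}\nor{\U\alpha}_{\mathfrak h^{1/2}_R}^2/\nor{\alpha}_{\mathfrak h^{1/2}_L}^2$, and it is enough to show that $\U$ does not increase these norms, and strictly decreases them on nonzero vectors.

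Next I would reinterpret the two norms as minimal Dirichlet energies over the half–cylinders. For $\alpha\in\mathfrak h^{1/2}_L$ set $g:=\sum_j\alpha_j\varphi_j^L$, the common trace carried on $\Gamma$. By the separation of variables used in \eqref{sviluppi} and in the proof of Lemma \ref{lemma_N_monotona}, the function $w^L:=\sum_j\alpha_j\e^{\sqrt{\lambda_j^L}x}\varphi_j^L$ is the finite–energy harmonic function on $C^L$ with trace $g$ on $\{x=0\}$, it minimizes $\int_{C^L}\abs{\nabla z}^2$ among all $z\in\D^{1,2}(C^L)$ with that trace, and its energy equals $\nor{\alpha}_{\mathfrak h^{1/2}_L}^2$. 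Similarly $\U\alpha$ is the sequence of $\{\varphi_k^R\}$–coefficients of $g$ read on the section of $C^R$, extended by $0$ on $U^R\setminus\overline{U^L}$ as forced by the Dirichlet condition on $\partial\Omega$ there (this is the configuration $\overline{U^L}\subseteq U^R$, in which $\widetilde\U$ is the trivial–extension operator), so that $\nor{\U\alpha}_{\mathfrak h^{1/2}_R}^2$ equals the minimum of $\int_{C^R}\abs{\nabla z}^2$ over all $z\in\D^{1,2}(C^R)$ whose trace on the section is $g$ on $\Gamma$ and $0$ elsewhere, attained by the decaying harmonic extension $w^R$.

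Finally I would exhibit a competitor for this last minimum built from $w^L$: reflecting across $\{x=0\}$, the function $\widehat w_0(x,y):=w^L(-x,y)$ is harmonic on $\{x>0,\ y\in U^L\}$, has the same Dirichlet energy as $w^L$, carries the trace $g$ on $\{x=0\}$, and vanishes on $\{x>0,\ y\in\partial U^L\}$ because $w^L$ vanishes on the lateral boundary of $C^L$; hence its extension by $0$ across $\partial U^L$, say $\widehat w$, lies in $\D^{1,2}(C^R)$, has Dirichlet energy $\nor{\alpha}_{\mathfrak h^{1/2}_L}^2$, and has an admissible trace on the section of $C^R$. Therefore $\nor{\U\alpha}_{\mathfrak h^{1/2}_R}^2\le\int_{C^R}\abs{\nabla\widehat w}^2=\nor{\alpha}_{\mathfrak h^{1/2}_L}^2$, i.e. $\nor T\le1$; and if equality held for some $\alpha$, then $\widehat w$ would coincide with the harmonic minimizer $w^R$ on the connected cylinder $C^R$, yet $\widehat w\equiv0$ on the nonempty open set $\{x>0,\ y\in U^R\setminus\overline{U^L}\}$, so unique continuation forces $\widehat w\equiv0$, hence $g\equiv0$ and $\alpha=0$. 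Thus $\nor{T\alpha}_{\mathfrak h^{1/2}_L}<\nor{\alpha}_{\mathfrak h^{1/2}_L}$ for every $\alpha\neq0$.

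The step I expect to be the real obstacle is the admissibility of the competitor $\widehat w$: one must verify that trivially extending the reflected minimizer across $\partial U^L$ genuinely yields a $\D^{1,2}(C^R)$ function vanishing on the whole lateral boundary of $C^R$ and with the correct $H^{1/2}$–trace on the section, which is exactly where the homogeneous Dirichlet condition of $w^L$ on $\partial C^L$ (equivalently, of $u^L$ in \eqref{sviluppi}) and the inclusion $\overline{U^L}\subseteq U^R$ enter. Once this trace matching is justified, the remaining ingredients are the routine separation–of–variables bookkeeping already developed in Section 2.
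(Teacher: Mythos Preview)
Your route is genuinely different from the paper's. You recognise that $T$ is self-adjoint and nonnegative on $\mathfrak h^{1/2}_L$ with $\langle T\alpha,\alpha\rangle_L=\nor{\U\alpha}^2_{\mathfrak h^{1/2}_R}$, reinterpret the two $\mathfrak h^{1/2}$-norms as minimal Dirichlet energies of harmonic extensions into the half-cylinders, and build an explicit competitor on $C^R$ by reflecting the $C^L$-extension and extending by zero across $\partial U^L$. The paper instead argues spectrally: via a partial-isometry identity it shows that $\U^*\Lambda^R\U$ has the same eigenvalues as $\Lambda^R$, and then invokes Weyl's law $\lambda_j^{R,L}\sim C_{N-1}\,j^{2/(N-1)}|U^{R,L}|^{-2/(N-1)}$ together with $|U^L|<|U^R|$ to conclude that the ratios $\lambda_j^R/\lambda_j^L$ are bounded away from $1$ \emph{uniformly} in $j$.

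There is, however, a real gap in your conclusion. From the competitor and unique continuation you obtain $\nor{\U\alpha}_{\mathfrak h^{1/2}_R}<\nor{\alpha}_{\mathfrak h^{1/2}_L}$ for every $\alpha\neq0$, hence $\langle T\alpha,\alpha\rangle_L<\nor{\alpha}_L^2$ pointwise and $\nor T\le1$. But this does \emph{not} give $\nor T<1$: the supremum of a quadratic form that is strictly below $1$ at every nonzero vector can still equal $1$ (take $Te_j=(1-1/j)e_j$ on $\ell^2$). The paper means ``contraction'' in the Banach sense $\nor T<1$, since immediately afterwards it inverts $\mathbb I-T$ to solve \eqref{saltoderivata_coefficienti_giusta_2}; your argument yields only that $\mathbb I-T$ is injective. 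What is missing is a \emph{uniform} energy gap, and this is precisely what the Weyl's-law comparison in the paper supplies. Incidentally, the step you flag as the expected obstacle---the admissibility of the extended-by-zero competitor $\widehat w$---is actually routine (extension by zero from $H^1_0(U^L)$ to $H^1_0(U^R)$ is standard once $\overline{U^L}\subset U^R$); the genuine difficulty lies in the uniformity, not in the trace matching.
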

\begin{proof}
Proving that $\mathcal U^*\Lambda^R\mathcal U$ has got the same
eigenvalues of $\Lambda^R$ will be sufficient to our aim. Once we
have that, we apply the well-known Weyl's law: being $\lambda_j$ the
$j$-th eigenvalue of the Laplacian on a bounded regular domain
$\Omega$ of dimension $n$, the following asymptotic behavior holds
$\lambda_j \sim C_n j^{2/n} \abs{\Omega}^{-2/n}$ as $j\rightarrow
+\infty$ and $C_n$ is a constant depending only on the dimension
$n$. Then, not only the ratio $\frac{\lambda_j^R}{\lambda_j^L}<1$
and then $T$ is a contraction at every point, but also the ratio is
uniformly far away from 1, so that $T$ is a contraction on the whole
space $\mathfrak{h}^{1/2}_L$.

Let us study the eigenvalues of $\mathcal U^*\Lambda^R\mathcal U$.
First of all we note that $\mathcal U$ is a bounded operator from
$\mathfrak{h}^{1/2}_L$ into $\mathfrak{h}^{1/2}_R$ with operator
norm less or equal to 1. In fact, $\widetilde{\mathcal U}$ is an
isometry
from $L^2(U^L)$ into $L^2(U^R)$ as well as from $H^1_0(U^L)$ into
$H^1_0(U^R)$. Being $H^{1/2}(U^L)$ and $H^{1/2}(U^R)$ intermediate
spaces $[L^2(U^L),H^1(U^L)]_{1/2}$ and $[L^2(U^R),H^1(U^R)]_{1/2}$
respectively, the operator $\widetilde{\mathcal U}: H^{1/2}(U^L)
\rightarrow H^{1/2}(U^R)$ has operator norm
$$\nor{\widetilde{\mathcal U}} \leq \nor{\widetilde{\mathcal U}}_{L^2(U^L),H^1(U^L)}
\cdot \nor{\widetilde{\mathcal U}}_{L^2(U^R),H^1(U^R)} \leq 1$$ (see
\cite{A}).

Secondly, $\mathfrak{h}^{1/2}_L \subset \mathfrak{h}^{1/2}_R$ thanks
to the relation between the eigenvalues mentioned above.


Then, $\mathcal U$ is a \emph{partially isometric operator} from
$\mathfrak{h}^{1/2}_R$ into $\mathfrak{h}^{1/2}_R$, since it is an
isometry on the subspace $\mathfrak{h}^{1/2}_L$. So, $\mathcal
U\mathcal U^*=\mathbb I$ on $\mathfrak{h}^{1/2}_L$ (see \cite{K}),
and multiplying the eigenvalue equation $(\mathcal
U^*\Lambda^R\mathcal U)\alpha=\mu\alpha$ by $\mathcal U$ we obtain
$$\Lambda^R\mathcal U\,\alpha = \mathcal U\,\mu\alpha=\mu\mathcal U\,\alpha,$$
the thesis.
\end{proof}
Thanks to the previous proposition, Equation
\eqref{saltoderivata_coefficienti_giusta_2} has a unique solution
which is nontrivial since $\alpha_0\neq 0$.

We note that whenever $\Phi$ is the solution to the problem
\eqref{problema_semicilindro}, then the first component $\alpha_1$
of the solution $\alpha$ to Equation
\eqref{saltoderivata_coefficienti_giusta_2} is for sure different
from zero. This is implied by the uniqueness of a
\underline{positive} solution to the problem \eqref{problema_omega}.
Moreover, from Remark \eqref{andamento_u^L,R} it describes the
asymptotic behavior of $u^L$ for $x\rightarrow -\infty$.

\subsection{Generalization to union of many chambers}\label{subsec:many_chambers}

Let us consider a domain which is a union of several different
chambers, such that the width of each chamber is negligible with
respect to the corresponding length. We mean $\Omega= C^1 \cup
\ldots \cup C^N$. The previous case $\Omega=C^L\cup C^R$ is
obviously covered by this type of domains. The proof of existence
and uniqueness of a $\mathcal C^1$ positive harmonic function in
such a domain is a straightforward consequence of Theorem
\eqref{teorema_omega}. As a matter of fact, we can merely iterate
its proof $N-1$ times with the suitable (slight) modifications,
where $N$ denotes the number of the chambers.

Moreover, suppose not to know the number of the chambers, but rather
the asymptotic behavior of the solution for $x\rightarrow -\infty$,
that is
\begin{equation}\label{asintotico_chambers}
u(x,y) \stackrel{x\rightarrow -\infty}{\sim} \kappa\
\e^{\sqrt{\lambda_1^1}x}\varphi_1^1(y)
\end{equation}
where $\lambda_1^1$ denotes the first eigenvalue for $\Delta^{N-1}$
for the first chamber and $\varphi_1^1(y)$ its relative
eigenfunction. Then it will be
\begin{equation}\label{kappa}
\kappa = \alpha_1^1 \cdot \alpha_1^2 \cdot \ldots \cdot
\alpha_1^{N-1},
\end{equation}
where $\alpha_1^j$ are the analogues of $\alpha_1$ in Equation
\eqref{sviluppi} for the couple of chambers $(C^j,C^{j+1})$. In this
way we can deduce the number of the chambers from $\kappa$, i.e.
from the solution's asymptotic behavior at $-\infty$.

Conversely, if the domain consists in the union of $N$ chambers, we
can immediately state that the asymptotic behavior of the unique
$C^1$ positive harmonic function for $x\rightarrow -\infty$ is
\eqref{asintotico_chambers} with $\kappa$ given by \eqref{kappa}.

%
%
%

\end{document}